\newtheorem{theorem}{Theorem}[section]
\newtheorem{corollary}{Corollary}
\newtheorem{lemma}{Lemma}[section]
\newtheorem{proposition}{Proposition}[section]
\theoremstyle{definition}
\newtheorem{definition}[theorem]{Definition}
\newtheorem{remark}{Remark}[section]
\def\ds{\displaystyle}
\def\e{{\varepsilon}}
\title[Shape Stability of OCPs for Systems of Hammerstein Type]{Shape Stability of Optimal Control Problems\\ in Coefficients for Coupled System\\ of Hammerstein Type}
\author[Olha P. Kupenko and Rosanna Manzo]{}
\subjclass{Primary:  49J20, 35J57; Secondary: 49J45, 93C73.}
\keywords{Nonlinear monotone Dirichlet problem, equation of Hammerstein type, control in coefficients, domain perturbation.}
 \email{kogut\_olga@bk.ru}
 \email{rmanzo@unisa.it}
\begin{document}
\maketitle

\centerline{\scshape Olha P. Kupenko }
\medskip
{\footnotesize
\centerline{Dnipropetrovsk
Mining University}
 \centerline{Department of System Analysis and Control}
   \centerline{ Karl Marks av., 19, 49005 Dnipropetrovsk,
Ukraine}
\centerline{and}
   \centerline{Institute for
Applied System Analysis}
   \centerline{ National Academy of Sciences and Ministry
of Education and Science of Ukraine}
\centerline{ Peremogy av., 37/35, IPSA,
03056 Kyiv, Ukraine}
}

\medskip

\centerline{\scshape Rosanna Manzo}
\medskip
{\footnotesize
 \centerline{ Universit\`{a} degli Studi di Salerno}
   \centerline{Dipartimento di Ingegneria dell'Informazione, Ingegneria Elettrica e Matematica Applicata}
   \centerline{Via Giovanni Paolo II, 132, 84084 Fisciano (SA), Italy}
}

\bigskip
 \centerline{(Communicated by Gregoire Allaire)}

\begin{abstract}
In this paper we consider an optimal control
problem (OCP) for the coupled system of a nonlinear monotone Dirichlet problem with matrix-valued $L^\infty(\Omega;\mathbb{R}^{N\times N} )$-controls in coefficients
  and a nonlinear equation of Hammerstein type. Since problems
of this type have no solutions in general, we make a special
assumption on the coefficients of the state equation and introduce
the class of so-called solenoidal admissible controls. Using the direct
method in calculus of variations, we prove the existence of an optimal control. We also study the stability of the
optimal control problem with respect to the domain perturbation. In particular, we derive the sufficient conditions
of the Mosco-stability for the given class of OCPs.
\end{abstract}

\section{Introduction}
The aim of this paper is  to prove the
existence result for an optimal control problem (OCP) governed by the system of a nonlinear monotone elliptic equation
with homogeneous Dirichlet boundary conditions and a nonlinear equation of Hammerstein type, and  to provide sensitivity analysis of the considered optimization problem with respect to the domain perturbations.
As controls we consider the matrix of coefficients in the main part of the elliptic equation. We assume that admissible controls are measurable and uniformly bounded matrices of $L^\infty(\Omega;\mathbb{R}^{N\times N})$.

Systems with distributed parameters and optimal control problems for systems described by PDE, nonlinear integral and ordinary differential equations
have been widely studied by many authors (see for example \cite{Ivan_Mel,Lasiecka,Lions_0, Lurie,Zgurovski:99}). However, systems which contain equations of different types and optimization problems associated with them are still less well understood. In general case including as well  control and state constraints, such problems are rather complex and have no simple constructive solutions. The system, considered in the present paper, contains two equations: a nonlinear monotone elliptic equation with homogeneous Dirichlet boundary conditions and a nonlinear equation of Hammerstein type, which nonlinearly depends on the solution of the first object.  The optimal control problem we study here is to minimize the discrepancy between a given distribution $z_d\in L^p(\Omega)$ and a solution of Hammerstein equation $z=z(\mathcal{U},y)$, choosing an appropriate matrix of coefficients $\mathcal{U}\in U_{ad}$, i.e.
\begin{equation}
\label{0.1} I_{\Omega}(\mathcal{U},y,z)=\int_\Omega |z(x)-z_d(x)|^p\,dx \longrightarrow \inf
\end{equation}
subject to constrains
\begin{gather}
\label{0.2}
z + B F(y,z)=g\quad\mbox{ in }\Omega,\\
\label{0.3}
-\mathrm{div}\left(\mathcal{U}(x)[(\nabla y)^{p-2}]\nabla y
\right)+ |y|^{p-2}y=f\quad\mbox{ in }\Omega,\\
\mathcal{U}\in U_{ad},\quad y\in W^{1,p}_0(\Omega),
\label{0.4}
\end{gather}
where $U_{ad}\subset L^\infty(\Omega;\mathbb{R}^{N\times N})$ is a set of admissible controls, $B:L^q(\Omega)\to L^p(\Omega)$ is a positive linear operator, $F:W_0^{1,p}(\Omega)\times L^p(\Omega)\to L^q(\Omega)$ is an essentially nonlinear and non-monotone operator, $f\in W^{-1,q}(\Omega)$ and $g\in L^p(\Omega)$ are given distributions.

Since the range of optimal control problems in
coefficients is very wide, including as well optimal shape
design problems, optimization of certain evolution systems, some
problems originating in mechanics and others, this topic has been
widely studied by many authors. We mainly could mention Allaire
\cite{Allaire}, Buttazzo \& Dal Maso \cite{ButMas},
 Calvo-Jurado \& Casado-Diaz \cite{CalCas1},
 Haslinger \& Neittaanm¨aki
\cite{Haslinger}, Lions \cite{Lions_0},
Lurie \cite{Lurie}, Murat \cite{Murat1971}, Murat \& Tartar
\cite{MuratTartar1997}, Pironneau \cite{Pironneau}, Raytum
\cite{Raytum:89}, Sokolowski \&
Zolesio \cite{Sokolowski}, Tiba \cite{Tiba}, Mel'nik \& Zgurovsky
\cite{Zgurovski:99}.
In fact (see for instance \cite{Murat1971}), the most of
optimal control problems in coefficients for linear elliptic equations have no solution in general. It turns out that this
circumstance is the characteristic feature for the majority of optimal control problems in
coefficients. To overcome this difficulty, in present article, by analogy with \cite{CUO_09,OKogut2010,Kupenko2011}, we put some additional constrains on the set of admissible controls. Namely, we consider the matrix-valued controls from the so-called generalized solenoidal set. The elements of this set do not belong to any Sobolev space, but still are a little bit \textquotedblleft more regular \textquotedblright\, then those from $L^\infty$-class. Typically, the matrix of coefficients in the principle part of PDEs stands for anisotropic physical properties of media where the processes are studied. The main reason we introduce the class of generalized solenoidal controls is to achieve the desired well-posedness of the corresponding OCP and avoid the \textquotedblleft over regularity\textquotedblright\ of optimal characteristics. We give the precise definition of such controls in Section \ref{Sec 2} and prove that in this case the
original optimal control problem admits at least one solution.
It should be noticed that we do not involve the homogenization method and the
relaxation procedure in this process.

In practice, the equations of Hammerstein type appear as integral or integro-differential equations. The class of integral equations is very important for theory and applications, since there are less restrictions on smoothness of the desired solutions involved in comparison to those for the solutions of differential equations. Appearance of integral equations when solving boundary value problems is quite natural, since equations of such type bind together the values of known and unknown functions on bounded domains, in contrast to differential equations, where domains are infinitely small. It should be also mentioned here, that solution uniqueness is not typical for equations of Hammerstein type or optimization problems associated with such objects (see \cite{AMJA}). Indeed, this property requires rather strong assumptions on operators $B$ and $F$, which is rather restrictive in view of numerous applications (see \cite{VainLav}). The physical motivation of optimal control problems which are similar to those investigated in the present paper is widely discussed in \cite{AMJA, ZMN}.

As was pointed above, the principal feature of this problem is the
fact that an optimal solution for \eqref{0.1}--\eqref{0.4} does
not exist in general (see, e.g., \cite{ButMas}, \cite{CalCas1}, \cite{Murat1971},
 \cite{Raytum:89}). So here we have a typical
situation for the general optimal control theory. Namely, the
original control object is described by well-posed boundary value
problem, but the associated optimal control problem is ill-posed
and requires relaxation.

Since there is no good topology a priori given on the set of all
open subsets of $\mathbb{R}^N$, we study the stability properties
of the original control problem imposing some constraints on
domain perturbations. Namely, we consider two types of domain
perturbations: so-called topologically admissible perturbations
(following Dancer \cite{Dancer}), and perturbations in the
Hausdorff complementary topology (following Bucur and Zolesio
\cite{BuZo}). The asymptotical behavior of sets of admissible
triplets $\Xi_\e$ --- controls and the corresponding states --- under
domain perturbation is described in detail in Section
\ref{Sec 3}. In particular, we show that in this case the sequences of admissible triplets  to the perturbed problems are compact with respect to the
weak convergence in $L^\infty(D;\mathbb{R}^{N\times N})\times
W^{1,p}_0(D)\times L^p(D)$. Section \ref{Sec_4} is devoted to the stability
properties of optimal control problem \eqref{0.1}--\eqref{0.4}
under the domain perturbation. Our treatment of this question is
based on a new stability concept for optimal control problems (see for comparison \cite{CUO_09, CUO_12}). We
show that Mosco-stable optimal control problems
possess \textquotedblleft good\textquotedblright\, variational properties, which allow using optimal
solutions to the perturbed problems in \textquotedblleft simpler\textquotedblright\, domains as a
basis for the construction of suboptimal controls for the original
control problem. As a practical motivation of this approach we want to point out
that the \textquotedblleft real\textquotedblright\, domain $\Omega$ is never perfectly
smooth but contains microscopic asperities of  size
significantly smaller than characteristic length scale of the
domain. So a direct numerical computation of the solutions of
optimal control problems in such domains is extremely difficult. Usually it needs a very fine discretization mesh,
which means an enormous computation time, and such a computation
is often irrelevant. In view of the variational properties of
Mosco-stable problems  we can replace the
\textquotedblleft rough\textquotedblright\, domain $\Omega$ by a family of more \textquotedblleft regular\textquotedblright\, domains
$\left\{\Omega_\e\right\}_{\e>0}\subset D$ forming some admissible
perturbation and to approximate the original problem by the
corresponding perturbed problems \cite{Kogut}.

\section{Notation and preliminaries}
\label{Sec 1}
Throughout the paper $D$ and $\Omega$ are bounded
open subsets of $\mathbb{R}^N$, $N\ge 1$ and $\Omega\subset\subset D$. Let $\chi_\Omega$ be the characteristic function of the set $\Omega$  and let $\mathcal{L}^N(\Omega)$ be the $N$-dimensional Lebesgue measure of $\Omega$.   The space
$\mathcal{D}^\prime(\Omega)$ of distributions in $\Omega$ is the
dual of the space $C^\infty_0(\Omega)$. For real numbers $2\le
p<+\infty$,  and $1<
q<+\infty$ such that $1/p+1/q=1$, the space
$W^{1,p}_0(\Omega)$ is the closure of $C^\infty_0(\Omega)$ in
the Sobolev space $W^{1,p}(\Omega)$ with respect to the norm
\begin{equation}
\label{0}
\|y\|_{W_0^{1,p}(\Omega)}=\left(\int_\Omega \sum_{k=1}^{N}\left|\ds\frac{\partial y}{\partial x_i}\right|^p\,dx+\int_\Omega|y|^p\,dx\right)^{1/p},\;\forall\, y\in W_0^{1,p}(\Omega),
\end{equation}
 while $W^{-1,q}(\Omega)$ is the dual space of $W^{1,p}_0(\Omega)$.

For any vector field ${v}\in
L^q(\Omega;\mathbb{R}^N)$, the divergence
is an element of the space $W^{-1,\,q}(\Omega)$ defined by the
formula

\ \vspace*{-10pt}
\begin{equation}
\label{1.1}
\left<\mathrm{div}\,v,\varphi\right>_{W_0^{1,p}(\Omega)}
= -\int_\Omega
(v,\nabla\varphi)_{\mathbb{R}^N}\,dx,\quad \forall\,\varphi\in
W^{1,p}_0(\Omega),
\end{equation}
where $\left<\cdot,\cdot\right>_{
W^{1,p}_0(\Omega)}$ denotes the duality pairing between
$W^{-1,q}(\Omega)$ and $W^{1,p}_0(\Omega)$, and
$(\cdot,\cdot)_{\mathbb{R}^N}$ denotes the scalar product of two
vectors in $\mathbb{R}^N$.
A vector field $\mathbf{v}$ is said to be solenoidal, if
$\mathrm{div}\,\mathbf{v}=0$.

\textit{Monotone operators.} Let $\alpha$ and $\beta$ be constants such that $
0<\alpha\le\beta<+\infty$. We define
$M_{p}^{\alpha,\beta}(D)$ as the set of all square symmetric matrices
 $\mathcal{U}(x)=[a_{i\,j}(x)]_{1\le i,j\le N}$ in $L^\infty(D;\mathbb{R}^{N\times N})$ such
that the following conditions of growth, monotonicity, and strong coercivity are fulfilled:
\begin{gather}
\label{1.3} |a_{ij}(x)|\le\beta\quad\text{a.e. in }\ D,\ \forall\ i,j\in\{1,\dots,N\},\\
\label{1.4} \left(\mathcal{U}(x)([\zeta^{p-2}]\zeta-[\eta^{p-2}]\eta),\zeta-\eta\right)_{\mathbb{R}^N}\ge 0 \quad\text{a.e. in }\ D,\ \forall\,
\zeta,\eta\in \mathbb{R}^N,\\
\label{1.5} \left(\mathcal{U}(x)[\zeta^{p-2}]\zeta,\zeta\right)_{\mathbb{R}^N}=
 \sum\limits_{i,j=1}^N{a_{i\,j}(x)|\zeta_j|^{p-2}\,\zeta_j\,\zeta_i}\ge\alpha\,|\zeta|_p^p\quad\text{a.e in }\ D,
\end{gather}
where
$|\eta|_p=\left(\sum\limits_{k=1}^N |\eta_k|^p\right)^{1/p}$ is the H\"{o}lder norm of $\eta\in \mathbb{R}^N$ and
\begin{equation}
\label{1.5aa}
[\eta^{p-2}]=\mathrm{diag}\{|\eta_1|^{p-2},|\eta_2|^{p-2},\dots,|\eta_N|^{p
-2}\},\quad \forall \eta\in\mathbb{R}^N.
\end{equation}
\begin{remark}
\label{Rem 1.6} It is easy to see that $M_{p}^{\alpha,\beta}(D)$ is
a nonempty subset of $L^\infty(D;\mathbb{R}^{N\times N})$.
Indeed, as a representative of the set $M_{p}^{\alpha,\beta}(D)$ we can take any diagonal matrix of the form
$\mathcal{U}(x)=\mathrm{diag}\{\delta_1(x),\delta_2(x),\dots,\delta_N(x)\}$,
where functions $\delta_i(x)\in L^\infty(D)$ are such that $\alpha\le\delta_i(x)\le \beta$ a.e. in $D$
$\forall\,i\in\{1,\dots,N\}$ (see \cite{CUO_09}).
\end{remark}

Let us consider a nonlinear operator
$A:M_{p}^{\alpha,\beta}(D)\times W_0^{1,p}(\Omega)\to W^{-1,q}(\Omega)$ defined as
$$
A(\mathcal{U},y)=-\mathrm{div}\left(\mathcal{U}(x)[(\nabla y)^{p-2}]\nabla y\right)+|y|^{p-2}y,
$$
 or via the paring
\begin{gather*}
\langle A(\mathcal{U},y),v\rangle_{W_0^{1,p}(\Omega)}=\sum\limits_{i,j=1}^N\int_{\Omega}{\left(a_{ij}(x)\left|\displaystyle\frac{\partial
y}{\partial x_j}\right|^{p-2}\displaystyle\frac{\partial
y}{\partial x_j}\right) \displaystyle\frac{\partial v}{\partial
x_i}\,dx}\\+\int_{\Omega}{|y|^{p-2}y\,v\,dx},\quad \forall\,v\in W_0^{1,p}(\Omega).
\end{gather*}
In view of properties \eqref{1.3}--\eqref{1.5}, for every fixed matrix $\mathcal{U}\in M_{p}^{\alpha,\beta}(D)$, the operator $A(\mathcal{U},\cdot)$ turns out to be coercive, strongly monotone and demi-continuous in the following sense: $y_k\rightarrow y_0$ strongly in $W_0^{1,p}(\Omega)$ implies that $A(\mathcal{U},y_k)\rightharpoonup A(\mathcal{U},y_0) $
weakly in $W^{-1,q}(\Omega)$ (see \cite{Gaevski}). Then by
 well-known existence results for
nonlinear elliptic equations with strictly monotone
semi-continuous coercive operators (see \cite{Gaevski,Zgurovski:99}), the nonlinear Dirichlet boundary value problem
\begin{equation}
\label{1.7} A(\mathcal{U},y)=f\quad \text{ in }\quad \Omega,\qquad
y\in W^{1,p}_0(\Omega),
\end{equation}
admits a unique weak solution in $W^{1,p}_0(\Omega)$ for every fixed matrix $\mathcal{U}\in M_{p}^{\alpha,\beta}(D)$ and every distribution $f\in W^{-1,q}(D)$. Let us recall
that a function $y$ is the weak solution of \eqref{1.7} if
\begin{gather}
\label{1.8} y\in W^{1,p}_0(\Omega),\\
\label{1.9} \int_\Omega \left(\mathcal{U}(x)[(\nabla y)^{p-2}] \nabla y,
\nabla v\right)_{\mathbb{R}^N}\,dx + \int_\Omega |y|^{p-2}y
v\,dx=\int_\Omega f v\,dx,\; \forall\,v\in W^{1,p}_0(\Omega).
\end{gather}

\textit{System of nonlinear operator equations with an equation of Hammerstein type.}
Let $Y$ and $Z$ be Banach spaces, let $Y_0\subset Y$ be an arbitrary bounded set, and let $Z^\ast$ be the dual space to $Z$. To begin with we recall some useful properties of non-linear operators, concerning the solvability problem for  Hammerstein type equations and systems.
\begin{definition}\label{Def.1}
We say that the operator $G:D(G)\subset Z\to Z^\ast$ is radially continuous if for any $z_1,z_2\in X$ there exists $\e>0$ such that $z_1+\tau z_2\in D(G)$ for all $\tau\in [0,\e]$ and the real-valued function $[0,\e]\ni \tau\to\langle G(z_1+\tau z_2),z_2\rangle_Z$ is continuous.
\end{definition}
\begin{definition}
\label{Def.2}
An operator $G:Y\times Z\to Z^\ast$ is said to have a uniformly semi-bounded variation (u.s.b.v.) if for any bounded set  $Y_0\subset Y$
and any elements $z_1,z_2\in D(G)$ such that $\|z_i\|_Z\leq R$, $i=1,2$, the following inequality
\begin{equation}
\label{1.9.1}
\langle G(y,z_1)-G(y,z_2), z_1-z_2\rangle_{Z}\ge -\inf_{y\in Y_0}{C_{y}(R;\||z_1-z_2\||_Z)}
\end{equation}
holds true provided  the function $C_{y}:\mathbb{R}_+\times\mathbb{R}_+\to \mathbb{R}$ is continuous for each element $y\in Y_0$, and
$\ds\frac{1}{t}C_{y}(r,t)\to 0$ as $t\to 0$, $\forall\, r>0$. Here, $\||\cdot\||_Z$ is a seminorm on $Z$ such that $\||\cdot\||_Z$ is compact with respect to the norm $\|\cdot\|_Z$.
\end{definition}

It is worth to note that Definition \ref{Def.2} gives in fact a certain generalization of the classical monotonicity property. Indeed, if $C_{y}(\rho,r)\equiv 0$, then \eqref{1.9.1} implies the monotonicity property for the operator $G$ with respect to the second argument.
\begin{remark}\label{Rem 1.5}
Each operator $G:Y\times Z\to Z^\ast$ with u.s.b.v. possesses the following property (see for comparison Remark 1.1.2 in  \cite{AMJA}): if a set $K\subset Z$ is such that $\|z\|_{Z}\le k_1$ and $\langle G(y,z),z\rangle_{Z}\le k_2$ for all $z\in K$ and $y\in Y_0$, then there exists a constant $C>0$ such that $\|G(y,z)\|_{Z^\ast}\le C$, $\forall\,z\in K$ and $\forall y\in Y_0$.
\end{remark}

Let $B:Z^\ast\to Z$ and $F:Y\times Z\to Z^\ast$ be given operators such that the mapping $Z^\ast\ni z^\ast\mapsto B(z^\ast)\in Z$ is linear. Let $g\in Z$ be a given distribution. Then a typical operator equation of Hammerstein type can be represented as follows
\begin{equation}
\label{1.9.2}
z+B F(y,z)=g.
\end{equation}
The following existence result is well-known (see \cite[Theorem 1.2.1]{AMJA}).
\begin{theorem}\label{Th 1.1*}
Let $B:Z^\ast\to Z$ be a linear continuous positive operator such that it has the right inverse operator $B^{-1}_{r}:Z\to Z^\ast$. Let $F:Y\times Z\to Z^\ast$ be an operator with u.s.b.v such that
$F(y,\cdot):Z\to Z^\ast$ is radially continuous for each $y\in Y_0$ and the following inequality holds true
$$
\langle F(y,z)-B^{-1}_{r}g,z\rangle_{Z}\ge 0\quad\mbox{ if only } \|z\|_{Z}>\lambda>0,\;\lambda = const.
$$
Then the set
$$
\mathcal{H}(y)=\{z\in Z:\;z+BF(y,z)=g\ \text{in the sense of distributions }\}
$$
is non-empty and weakly compact for every fixed $y\in Y_0$ and $g\in Z$.
\end{theorem}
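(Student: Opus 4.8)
The plan is to recast the Hammerstein equation \eqref{1.9.2}, with $y\in Y_0$ fixed, as a single operator equation in $Z^\ast$ and then to apply the existence theory for radially continuous coercive operators of semi-bounded variation, reading off the weak compactness of $\mathcal{H}(y)$ from the resulting a~priori bound. Introduce $\mathcal{A}:Z\to Z^\ast$ by $\mathcal{A}(z):=F(y,z)-B^{-1}_{r}(g-z)=F(y,z)+B^{-1}_{r}z-B^{-1}_{r}g$. If $\mathcal{A}(z)=0$, i.e. $F(y,z)=B^{-1}_{r}(g-z)$, then applying the linear operator $B$ and using $BB^{-1}_{r}=\mathrm{id}_Z$ gives $BF(y,z)=g-z$, so that every zero of $\mathcal{A}$ belongs to $\mathcal{H}(y)$. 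First I would verify that $\mathcal{A}$ inherits the structural hypotheses on $F$: it is radially continuous in the sense of Definition~\ref{Def.1}, since $F(y,\cdot)$ is and the remaining term is affine and continuous; it has semi-bounded variation in the sense of Definition~\ref{Def.2}, because $\mathcal{A}(z_1)-\mathcal{A}(z_2)=F(y,z_1)-F(y,z_2)+B^{-1}_{r}(z_1-z_2)$ and the linear contribution $\langle B^{-1}_{r}(z_1-z_2),z_1-z_2\rangle_Z\ge0$ only strengthens the variation inequality \eqref{1.9.1} inherited from $F$; and the stated angle condition is precisely a coercivity estimate, since $\langle\mathcal{A}(z),z\rangle_Z=\langle F(y,z)-B^{-1}_{r}g,z\rangle_Z+\langle B^{-1}_{r}z,z\rangle_Z$, whose two summands are non-negative for $\|z\|_Z>\lambda$ by the hypothesis and by the positivity of $B$ respectively.

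With these properties established, I would obtain a zero of $\mathcal{A}$ by the Galerkin method. Fixing an increasing family of finite-dimensional subspaces with dense union in $Z$ (which is reflexive in the applications, $Z=L^p(\Omega)$), the finite-dimensional problems are solvable by the acute-angle lemma, and the coercivity estimate confines all Galerkin approximations to the ball $\|z\|_Z\le\lambda$. Passing to a subsequence, $z_n\rightharpoonup z_0$ weakly in $Z$; by Remark~\ref{Rem 1.5} the images $F(y,z_n)$ are bounded in $Z^\ast$ and hence, along a further subsequence, converge weakly to some $\xi\in Z^\ast$, while the continuity of the linear operator $B$ handles the passage to the limit in the linear part. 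It then remains to identify $\xi=F(y,z_0)$. Monotonicity is not available here, so I would argue by a Minty-type device adapted to semi-bounded variation: insert $z=z_0+\tau w$ into the inequality \eqref{1.9.1}, use the weak limits together with the radial continuity of $F(y,\cdot)$ to let $\tau\to0^+$, and crucially exploit that the seminorm $\||\cdot\||_Z$ is compact relative to $\|\cdot\|_Z$, so that the residual $C_y(R,\||z_n-z_0\||_Z)$ tends to zero. This identification of the weak limit of the non-monotone term is the principal obstacle of the whole argument.

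Finally, for the weak compactness of $\mathcal{H}(y)$ I would take an arbitrary sequence $\{z_n\}\subset\mathcal{H}(y)$. Testing the relation $z_n+BF(y,z_n)=g$ and invoking the angle condition together with Remark~\ref{Rem 1.5} shows that $\{z_n\}$ is bounded in $Z$, hence weakly relatively compact by reflexivity; extracting $z_n\rightharpoonup z_0$ and repeating the limit-passage of the previous paragraph — continuity of the linear $B$ plus the semi-bounded-variation identification of $\lim F(y,z_n)$ — yields $z_0+BF(y,z_0)=g$, i.e. $z_0\in\mathcal{H}(y)$. Thus $\mathcal{H}(y)$ is bounded and weakly sequentially closed, and therefore weakly compact, which together with the existence argument above completes the proof.
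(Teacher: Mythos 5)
First, a point of orientation: the paper never proves Theorem \ref{Th 1.1*} at all --- it is imported as a known result from \cite[Theorem 1.2.1]{AMJA} --- so there is no internal proof to compare yours against. The closest internal material is the proof of Theorem \ref{Th 2.8}, which reuses exactly the toolkit you assembled: the a priori bound $\|z\|_Z\le\lambda$ extracted from the angle condition, Remark \ref{Rem 1.5} to bound the images $F(y,z_n)$ in $Z^\ast$, and the u.s.b.v.\ inequality combined with radial continuity and compactness of the seminorm $\||\cdot\||_Z$ to identify weak limits of the nonlinear term. Your overall scheme --- Galerkin approximation plus acute-angle lemma for existence, then ``bounded and weakly sequentially closed, hence weakly compact'' (with reflexivity of $Z$, which you rightly flag as implicit) --- is the standard route for this theorem and is, in substance, the right proof.

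There is, however, one concrete gap: your reduction to $\mathcal{A}(z)=F(y,z)+B^{-1}_{r}z-B^{-1}_{r}g$ applies $B^{-1}_{r}$ to \emph{variable} arguments, whereas the hypothesis grants only the existence of a right inverse --- explicitly, $B$ is assumed continuous but $B^{-1}_{r}$ is not --- and in effect only the single element $h:=B^{-1}_{r}g$ (a preimage of $g$ under $B$) is ever needed or used by the paper. Your pointwise claims about $\mathcal{A}$ are fine, since $\langle B^{-1}_{r}v,v\rangle_Z=\langle B^{-1}_{r}v,BB^{-1}_{r}v\rangle_Z\ge0$ requires no continuity; it is the limit passages that break. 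To extend the Minty-type inequality $\langle \mathcal{A}(v),v-z_0\rangle_Z\ge -C_{y}(R;\||z_0-v\||_Z)$ from Galerkin test elements $v\in\bigcup_n Z_n$ to the elements $v=z_0+\tau w$ you must insert, you need demicontinuity of $\mathcal{A}$, and for the linear part this is equivalent to norm-boundedness of $B^{-1}_{r}$; the same boundedness is needed if you instead try to extract a weak $Z^\ast$-limit of $\mathcal{A}(z_n)$, since Remark \ref{Rem 1.5} controls $F(y,z_n)$ but says nothing about $B^{-1}_{r}z_n$. The classical proof of solvability of Hammerstein equations avoids this entirely by the conjugate substitution $w=F(y,z)$, $z=g-Bw$, i.e.\ solving $w=F(y,g-Bw)$ in $Z^\ast$ and pairing through the form $(w_1,w_2)\mapsto\langle w_1,Bw_2\rangle_Z$: there the positivity of the \emph{bounded} operator $B$ plays the role you assign to $B^{-1}_{r}$, and $B^{-1}_{r}$ touches only the datum $g$ (this is also the pattern of the paper's own manipulations in Theorem \ref{Th 2.8}, where $z_k$ is represented as $B^\ast w_k$). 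Rewritten in that variable, the rest of your argument goes through. Two smaller caveats worth making explicit: continuity of the finite-dimensional Galerkin maps does not follow from radial continuity alone (one needs the standard lemma that a radially continuous, locally bounded operator with u.s.b.v.\ is demicontinuous), and your bound for \emph{arbitrary} elements of $\mathcal{H}(y)$ --- needed for weak compactness --- really uses strict positivity of $B$, the same tacit strengthening the authors invoke in Theorem \ref{Th 2.8} when they declare $\langle B^\ast w_k,w_k\rangle$ strictly positive for $w_k\neq0$.
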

\vspace*{4pt}
\begin{definition}\label{def_new}
We say that

\begin{enumerate}
\item[($\mathfrak{M}$)] the operator $F:Y\times Z\to Z^\ast$ possesses the $\mathfrak{M}$-property if for any sequences $\{y_k\}_{k\in\mathbb{N}}\subset Y$ and $\{z_k\}_{k\in\mathbb{N}}\subset Z$ such that $y_k\to y$ strongly in $Y$ and $z_k\to z$ weakly in $Z$ as $k\to\infty$, the condition
\begin{equation}
\label{1*}
\lim_{k\to\infty}\langle F(y_k,z_k),z_k\rangle_{Z}=\langle F(y,z),z\rangle_{Z}
\end{equation}
implies that $z_k\to z$ strongly in $Z$.

\item[($\mathfrak{A}$)] the operator $F:Y\times Z\to Z^\ast$ possesses the $\mathfrak{A}$-property if
for any sequences $\{y_k\}_{k\in\mathbb{N}}\subset Y$ and $\{z_k\}_{k\in\mathbb{N}}\subset Z$ such that $y_k\to y$ strongly in $Y$ and $z_k\to z$ weakly in $Z$ as $k\to\infty$, the following relation \begin{equation}
\label{2*}
\liminf_{k\to\infty}\langle F(y_k,z_k),z_k\rangle_{Z}\ge
\langle F(y,z),z\rangle_{Z}
\end{equation}
holds true.
\end{enumerate}
\end{definition}

In what follows, we set $Y=W_0^{1,p}(\Omega)$, $Z=L^p(\Omega)$, and $Z^\ast=L^q(\Omega)$.

\subsection{Capacity} There are many ways to define the Sobolev
capacity. We use the notion of local
$p$-capacity which can be defined in the following way:
\begin{definition}
\label{Def 1.1} For a compact set $K$ contained in an arbitrary
ball $B$, capacity of $K$ in $B$, denoted by $C_p(K,B)$, is
defined as follows
\[
C_p(K,B)=\inf\left\{\int_B
|D\varphi|^p\,dx,\quad\forall\,\varphi\in C^\infty_0(B),\
\varphi\ge 1\ \text{ on }\ K\right\}.
\]
\end{definition}

For open sets contained in $B$ the capacity is defined by an
interior approximating procedure by compact sets (see
\cite{Heinonen}), and for arbitrary sets by an exterior
approximating procedure by open sets.

It is said that a property holds $p$-quasi everywhere (abbreviated
as $p$-q.e.) if it holds outside a set of $p$-capacity zero. It is
said that a property holds almost everywhere (abbreviated as
a.e.) if it holds outside a set of Lebesgue measure zero.

A function $y$ is called $p$-quasi--continuous if for any
$\delta>0$ there exists an open set $A_\delta$ such that
$C_p(A_\delta, B)<\delta$ and $y$ is continuous in $D\setminus
A_\delta$. We recall that any function $y\in W^{1,\,p}(D)$ has a
unique (up to a set of $p$-capacity zero) $p$-quasi continuous
representative. Let us recall the following results (see
\cite{Bagby,Heinonen}):
\begin{theorem}
\label{Th 1.1} Let $y\in W^{1,\,p}(\mathbb{R}^N)$. Then
$\left.y\right|_{\Omega}\in W^{1,\,p}_0(\Omega)$ provided
$y=0$ $p$-q.e. on $\Omega^c$ for a $p$-quasi-continuous
representative.
\end{theorem}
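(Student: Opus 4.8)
The plan is to use the standard duality between Sobolev functions vanishing $p$-quasi everywhere on a set and membership in $W^{1,p}_0$, reducing the statement to the characterization of $W^{1,p}_0(\Omega)$ via capacity. First I would observe that since $y\in W^{1,p}(\mathbb{R}^N)$ and we are looking at the restriction $\left.y\right|_\Omega$, the natural tool is the known characterization: for an open set $\Omega$, a function $u\in W^{1,p}(\Omega)$ belongs to $W^{1,p}_0(\Omega)$ if and only if its $p$-quasi-continuous representative vanishes $p$-q.e. on $\partial\Omega$ (or, stated via the complement, the extension by the quasi-continuous representative is consistent with the zero trace condition). The hypothesis $y=0$ $p$-q.e. on $\Omega^c$ is precisely the condition we need to feed into this characterization.

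The key steps, in order, are as follows. First I would fix a $p$-quasi-continuous representative $\tilde y$ of $y$, which exists and is unique up to a set of $p$-capacity zero by the cited regularization results. Second, I would approximate $\tilde y$ by a sequence $\{\varphi_k\}\subset C^\infty_0(\mathbb{R}^N)$ converging to $y$ in $W^{1,p}(\mathbb{R}^N)$; passing to a subsequence, one arranges that $\varphi_k\to\tilde y$ pointwise $p$-q.e. Third, using the fact that $\tilde y=0$ $p$-q.e. on $\Omega^c$, I would construct from the $\varphi_k$ a modified sequence of test functions supported essentially inside $\Omega$ — typically by a truncation together with a cutoff against the capacitary potential of a small neighborhood of the exceptional set — so that the modified sequence lies in $C^\infty_0(\Omega)$ (or at least in $W^{1,p}_0(\Omega)$) and still converges to $y$ in the $W^{1,p}$ norm. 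This would exhibit $\left.y\right|_\Omega$ as a norm limit of compactly supported smooth functions in $\Omega$, hence as an element of $W^{1,p}_0(\Omega)$.

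The main obstacle is the third step: producing the cutoff that kills the values of $y$ near $\Omega^c$ without destroying $W^{1,p}$ convergence. The difficulty is that $\Omega^c$ may be geometrically irregular, so one cannot simply multiply by a smooth cutoff adapted to the distance function; instead one must exploit the capacitary smallness of the set where $\tilde y\neq 0$ outside $\Omega$, choosing cutoff functions whose gradients are controlled in $L^p$ precisely by the $p$-capacity of the relevant thin neighborhoods. This is exactly the point where the definition of $C_p(\cdot,B)$ and the quasi-continuity of $y$ are used in tandem, and where the monotone convergence of capacities of shrinking neighborhoods guarantees that the correction terms vanish in the limit. Since this is a classical result, I would in practice invoke the cited references (\cite{Bagby,Heinonen}) for the technical execution of this cutoff argument rather than reproduce it, treating the theorem as a known characterization to be applied later in the paper.
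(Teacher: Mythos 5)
The paper gives no proof of this statement: Theorem \ref{Th 1.1} is explicitly recalled as a known classical result from \cite{Bagby,Heinonen}, and your outline --- fix a $p$-quasi-continuous representative, approximate by smooth functions converging $p$-q.e., then correct by capacitary cutoffs adapted to thin neighborhoods of the exceptional set, deferring the technical execution to those same references --- is precisely the standard argument found in that literature. So your treatment is correct and takes essentially the same approach as the paper, namely invoking the known characterization rather than reproving it.
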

\begin{theorem}
\label{Th 1.2} Let $\Omega$ be a bounded open subset of
$\mathbb{R}^N$, and let $y\in W^{1,\,p}(\Omega)$. If $y=0$ a.e. in
$\Omega$, then $y=0$ $p$-q.e. in $\Omega$.
\end{theorem}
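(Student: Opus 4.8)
The plan is to reduce the assertion to a capacitary weak-type estimate and then exploit that $y=0$ a.e.\ forces \emph{every} Lebesgue integral built from $y$ to vanish. First I would fix a $p$-quasi-continuous representative $\widetilde y$ of $y$ and record that, since the norm in \eqref{0} is defined purely through integrals, the hypothesis $y=0$ a.e.\ in $\Omega$ gives $\|y\|_{W_0^{1,p}(\Omega)}=0$; that is, $y$ is the zero element of $W^{1,p}(\Omega)$ and, in particular, $\int_{B}\bigl(|\nabla y|^p+|y|^p\bigr)\,dx=0$ on every ball $B$.

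Next I would localize and pass to superlevel sets. Since every point of the open set $\Omega$ is the centre of a ball $B\subset\subset\Omega$, the Lindel\"of property covers $\Omega$ by countably many such balls; and by the countable subadditivity of $C_p(\cdot\,,B)$ together with the decomposition $\{\widetilde y\neq 0\}=\bigcup_{k\in\mathbb{N}}\{|\widetilde y|>1/k\}$, it suffices to prove that for each such ball $B$ and each $\lambda>0$ the superlevel set $\{x\in\Omega:\ |\widetilde y(x)|>\lambda\}$ meets $B$ in a set of zero $p$-capacity in $B$.

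The crucial ingredient is the weak-type capacitary bound
\[
C_p\bigl(\{x:\ |\widetilde y(x)|>\lambda\}\cap B',\,B\bigr)\le \frac{C}{\lambda^{p}}\int_{B}\bigl(|\nabla y|^p+|y|^p\bigr)\,dx,\qquad B'\subset\subset B,
\]
valid for the quasi-continuous representative of an arbitrary $W^{1,p}$ function. Granting it, the right-hand side is zero by the first step, whence each superlevel set is $p$-negligible and the conclusion $y=0$ $p$-q.e.\ follows after the countable union. I would derive the estimate from the very construction of $\widetilde y$: approximate $y$ in $W^{1,p}(B)$ by functions $\varphi_j\in C^\infty(B)$, pass to a subsequence converging to $\widetilde y$ pointwise $p$-q.e., and, for fixed $\lambda$, test Definition \ref{Def 1.1} on the open sets $\{|\varphi_j|>\lambda\}\supset K$ with competitors obtained by truncating $|\varphi_j|/\lambda$ at height $1$ and cutting off near $\partial B$; letting $j\to\infty$ produces the displayed Chebyshev-type inequality.

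The main obstacle is precisely this weak-type estimate, as it is what couples the integral norm of $y$ to the fine (capacitary) behaviour of $\widetilde y$; its justification is the substance of the approximation theorem for $p$-quasi-continuous representatives and may alternatively be quoted from \cite{Bagby,Heinonen}. It is worth noting that for $p>N$ the theorem is immediate, since then every function of $W^{1,p}(\Omega)$ admits a continuous representative and the only set of zero $p$-capacity is the empty set, so $y=0$ a.e.\ already gives $y\equiv 0$; the genuine content lies in the range $2\le p\le N$, where sets of zero capacity may be uncountable and the quantitative estimate above cannot be dispensed with.
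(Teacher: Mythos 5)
The paper itself offers no proof of Theorem~\ref{Th 1.2}: it is recalled from \cite{Bagby,Heinonen}, so your proposal must be judged against the standard potential-theoretic argument rather than against anything in the text. Your preparatory steps are sound: $y=0$ a.e.\ does force $\nabla y=0$ a.e.\ and hence the vanishing of all local Sobolev integrals; the localization to countably many balls, the decomposition of $\{\widetilde y\neq 0\}$ into superlevel sets, the countable subadditivity of $C_p$, and the closing remark about the trivial range $p>N$ are all fine. The genuine gap is a circularity at the crux. Theorem~\ref{Th 1.2} \emph{is} the uniqueness, up to sets of $p$-capacity zero, of quasi-continuous representatives: since $y=0$ a.e., the zero function is itself a quasi-continuous representative of $y$, and the assertion is that the given, arbitrary representative $\widetilde y$ agrees with it $p$-q.e. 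Your derivation of the weak-type estimate assumes that a subsequence of the smooth approximations $\varphi_j\to y$ in $W^{1,p}(B)$ converges pointwise $p$-q.e.\ to \emph{this} $\widetilde y$. What the approximation theorem actually gives is quasi-uniform (hence q.e.) convergence to \emph{some} quasi-continuous representative $\hat y$; identifying $\hat y$ with $\widetilde y$ up to capacity zero is precisely the uniqueness being proved. In the present degenerate situation the circle is naked: $y$ is the zero element of $W^{1,p}(B)$, so one may take $\varphi_j\equiv 0$, and then ``$\varphi_j\to\widetilde y$ $p$-q.e.''\ literally \emph{is} the conclusion. For the same reason, the fallback of quoting the estimate ``for the quasi-continuous representative of an arbitrary $W^{1,p}$ function'' from \cite{Bagby,Heinonen} concedes exactly the point at issue, which is what the paper already does by citing the theorem. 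What your argument honestly establishes is the statement for the canonical representative obtained as the q.e.-limit of smooth approximants; the missing content is the transfer to an arbitrary one. (Your other imprecisions --- pointwise q.e.\ convergence should be upgraded to quasi-uniform convergence to obtain the inclusion of superlevel sets, and the truncated Lipschitz competitors must be mollified to meet Definition~\ref{Def 1.1} --- are routine and repairable.)

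The standard non-circular proof (due to Kilpel\"ainen; see also the uniqueness theorem for quasicontinuous functions in \cite{Heinonen}) works directly with the given $\widetilde y$ and needs neither approximation nor a weak-type inequality. Fix a ball $B$ of your cover and $\lambda>0$, and set $U_\lambda=\{|\widetilde y|>\lambda\}\cap B$. First, $U_\lambda$ is quasi-open: if $A_\delta$ is an open set with $C_p(A_\delta,B)<\delta$ off which $\widetilde y$ is continuous, then $U_\lambda\setminus A_\delta$ is relatively open in $B\setminus A_\delta$, so $U_\lambda\cup A_\delta$ is open. Second --- and this is the key lemma your proposal lacks --- a quasi-open set of Lebesgue measure zero has capacity zero. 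Indeed, using that for open sets the capacity of Definition~\ref{Def 1.1} coincides with the infimum of $\int_B|D\psi|^p\,dx$ over $\psi\in W^{1,p}_0(B)$ with $\psi\ge 1$ a.e.\ on the set (the inequality $C_p\le\int_B|D\psi|^p\,dx$ for such a.e.-competitors follows by mollifying $\psi$ near compact subsets), pick $\psi$ with $\psi\ge 1$ a.e.\ on $A_\delta$ and $\int_B|D\psi|^p\,dx<2\delta$. Since $|U_\lambda|=0$ by the hypothesis $\widetilde y=0$ a.e., the same $\psi$ satisfies $\psi\ge 1$ a.e.\ on the \emph{open} set $U_\lambda\cup A_\delta$, whence $C_p(U_\lambda,B)\le C_p(U_\lambda\cup A_\delta,B)\le\int_B|D\psi|^p\,dx<2\delta$. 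Letting $\delta\to 0$ gives $C_p(U_\lambda,B)=0$, and your own countable-union step over $\lambda=1/k$ and over the balls covering $\Omega$ finishes the proof. This lemma is what allows the almost-everywhere hypothesis to act at the capacitary level without ever invoking uniqueness of representatives.
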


For these and other properties on quasi-continuous representatives,
the reader is referred to \cite{Bagby,Evans,Heinonen,Ziemer:89}.

\subsection{Convergence of sets}
In order to speak about \textquotedblleft domain perturbation\textquotedblright, we have
to prescribe a topology on the space of open subsets of $D$. To do this,
for the family of all open subsets of $D$, we define the Hausdorff
complementary topology, denoted by $H^c$, given by the metric:
\[
d_{H^c}(\Omega_1,\Omega_2)=\sup_{x \in
\mathbb{R}^N}\left|d(x,\Omega_1^c)-d(x,\Omega_2^c)\right|,
\]
where $\Omega_i^c$ are the complements of $\Omega_i$ in
$\mathbb{R}^N$.
\begin{definition}
\label{Def 1.3} We say that a sequence
$\left\{\Omega_\e\right\}_{\e>0}$ of open subsets of $D$ converges
to an open set $\Omega\subseteq D$ in $H^c$-topology, if
$d_{H^c}(\Omega_\e,\Omega)$ converges to $0$ as $\e\to 0$.
\end{definition}

The $H^c$-topology has some good properties, namely the space of
open subsets of $D$ is compact with respect to $H^c$-convergence,
and if $\Omega_\e\,\stackrel{H^c}{\longrightarrow}\,\Omega$, then
for any compact $K\subset\subset\Omega$ we have
$K\subset\subset\Omega_\e$ for $\e$ small enough. Moreover, a
sequence of open sets $\left\{\Omega_\e\right\}_{\e>0}\subset D$
$H^c$-converges to an open set $\Omega$, if and only if the
sequence of complements $\left\{\Omega_\e^c\right\}_{\e>0}$
converges to $\Omega^c$ in the sense of Kuratowski. We recall here
that a sequence $\left\{C_\e\right\}_{\e>0}$ of closed subsets of
$\mathbb{R}^N$ is said to be convergent to a closed set $C$ in the
sense of Kuratowski if the following two properties hold:
\begin{enumerate}
\item[$(K_1)$] for every $x\in C$, there exists a sequence
$\left\{x_\e\in C_\e\right\}_{\e>0}$ such that $x_\e\rightarrow x$
as $\e\to 0$;
\item[$(K_2)$] if $\left\{\e_k\right\}_{k\in \mathbb{N}}$ is a
sequence of indices converging to zero, $\left\{x_k\right\}_{k\in
\mathbb{N}}$ is a sequence such that $x_k\in C_{\e_k}$ for every
$k\in \mathbb{N}$, and $x_k$ converges to some $x\in
\mathbb{R}^N$, then $x\in C$.
\end{enumerate}
For these and other properties on $H^c$-topology, we refer to
\cite{Falconer}.

It is well known (see \cite{BuBu}) that in the case when $p> N$,
the $H^c$-convergence of open sets
$\left\{\Omega_\e\right\}_{\e>0}\subset D$ is equivalent to the
convergence in the sense of Mosco of the associated Sobolev
spaces.
\begin{definition}
 We say a sequence of spaces
$\left\{W^{1,\,p}_0(\Omega_\e)\right\}_{\e>0}$ converges in the
sense of Mosco to $W^{1,\,p}_0(\Omega)$ (see for comparison \cite{Mosco}) if the
following conditions are satisfied:
\begin{enumerate}
\item[$(M_1)$] for every $y\in W^{1,\,p}_0(\Omega)$ there exists a
sequence $\left\{y_\e\in W^{1,\,p}_0(\Omega_\e)\right\}_{\e>0}$
such that $\widetilde{y}_\e\rightarrow \widetilde{y}$ strongly in
$W^{1,\,p}(\mathbb{R}^N)$;

\item[$(M_2)$] if $\left\{\e_k\right\}_{k\in \mathbb{N}}$ is a
sequence converging to $0$ and $\left\{y_k\in W^{1,\,p}_0(\Omega_{\e_k})\right\}_{k\in \mathbb{N}}$
is a sequence such that  $\widetilde{y}_k\rightarrow \psi$
weakly in $W^{1,\,p}(\mathbb{R}^N)$, then there exists a function
$y\in W^{1,\,p}_0(\Omega)$ such that
$y=\left.\psi\right|_{\Omega}$.
\end{enumerate}
Hereinafter we denote by $\widetilde{y}_\e$ (respect.
$\widetilde{y}$) the zero-extension to $\mathbb{R}^N$ of a
function defined on $\Omega_\e$ (respect. on $\Omega$), that is,
$\widetilde{y}_\e=\widetilde{y}_\e \chi_{\Omega_\e}$ and
$\widetilde{y}=\widetilde{y} \chi_{\Omega}$.
\end{definition}

Following Bucur \& Trebeschi (see
\cite{Bucur_Treb}),  we have the following result.
\begin{theorem}
\label{Th 1.3} Let $\left\{\Omega_\e\right\}_{\e>0}$ be a sequence
of open subsets of $D$ such that
$\Omega_\e\,\stackrel{H^c}{\longrightarrow}\,\Omega$ and
$\Omega_\e\in \mathcal{W}_w(D)$ for every $\e>0$, with the class
$\mathcal{W}_w(D)$ defined as
\begin{multline}
\label{1.4*} \mathcal{W}_w(D)=\left\{\Omega\subseteq D\ :\
\forall\, x\in \partial\Omega, \forall\, 0<r<R<1; \right.\\\left.
\int_r^R \left(\frac{C_p(\Omega^c\cap
\overline{B(x,t)};B(x,2t))}{C_p(\overline{B(x,t)};B(x,2t))}\right)^{\frac{1}{p-1}}\frac{dt}{t}\ge
w(r,R,x)\right\},
\end{multline}
where $B(x,t)$ is the ball of radius $t$ centered at $x$,
and the function
\[
w:(0,1)\times(0,1)\times D\rightarrow \mathbb{R}^{+}
\]
is such that
\begin{enumerate}
\item[1.] $\lim_{r\to 0} w(r,R,x)=+\infty$, locally uniformly on $x\in
D$;

\item[2.] $w$ is a lower semicontinuous function in the third
argument.
\end{enumerate}

Then $\Omega\in \mathcal{W}_w(D)$ and the sequence of Sobolev
spaces $\left\{W^{1,\,p}_0(\Omega_\e)\right\}_{\e>0}$ converges in
the sense of Mosco to $W^{1,\,p}_0(\Omega)$.
\end{theorem}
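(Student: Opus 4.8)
The statement asserts two things: that the $H^c$-limit $\Omega$ again lies in the Wiener class $\mathcal{W}_w(D)$, and that $\{W^{1,p}_0(\Omega_\e)\}$ converges to $W^{1,p}_0(\Omega)$ in the sense of Mosco. My plan is to verify the two Mosco conditions $(M_1)$ and $(M_2)$ separately, reserving the uniform Wiener bound built into $\mathcal{W}_w(D)$ for the single place where it is truly indispensable, and then to deduce the stability of the class itself by a capacitary semicontinuity argument. Throughout I would use the equivalence, recalled above, between $H^c$-convergence of $\{\Omega_\e\}$ and Kuratowski convergence of the complements $\{\Omega_\e^c\}$, together with the characterizations of $W^{1,p}_0$ through quasi-continuous representatives supplied by Theorems \ref{Th 1.1} and \ref{Th 1.2}.

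I would dispatch $(M_1)$ first, since it needs no Wiener hypothesis. Given $y\in W^{1,p}_0(\Omega)$, approximate it in the $W^{1,p}(\mathbb{R}^N)$-norm by functions $\varphi_n\in C^\infty_0(\Omega)$; their zero-extensions satisfy $\widetilde{\varphi}_n\to\widetilde{y}$ strongly in $W^{1,p}(\mathbb{R}^N)$. For each $n$ the support $\mathrm{supp}\,\varphi_n$ is a compact subset of $\Omega$, so by the stated property of $H^c$-convergence (any $K\subset\subset\Omega$ satisfies $K\subset\subset\Omega_\e$ for $\e$ small) one has $\varphi_n\in W^{1,p}_0(\Omega_\e)$ for all $\e\le\e_n$. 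A diagonal choice $\e\mapsto n(\e)$ then yields a recovery sequence $y_\e:=\varphi_{n(\e)}\in W^{1,p}_0(\Omega_\e)$ with $\widetilde{y}_\e\to\widetilde{y}$ strongly, which is exactly $(M_1)$.

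Condition $(M_2)$ is the core of the theorem and the only place where $\mathcal{W}_w(D)$ is used. Let $\e_k\to0$ and $y_k\in W^{1,p}_0(\Omega_{\e_k})$ with $\widetilde{y}_k\rightharpoonup\psi$ weakly in $W^{1,p}(\mathbb{R}^N)$; by Theorem \ref{Th 1.1} it suffices to show that the quasi-continuous representative of $\psi$ vanishes $p$-q.e.\ on $\Omega^c$. The vanishing of $\psi$ off $\overline{\Omega}$ is the comparatively standard part: it follows from the $H^c$-convergence of the domains together with the homogeneous boundary condition satisfied by each $\widetilde{y}_k$ (a thin-layer Poincar\'e estimate gives $\|\widetilde{y}_k\|_{L^p(K)}\to0$ on compacta $K\subset\subset(\overline{\Omega})^c$), whence $\psi=0$ a.e.\ there and, by Theorem \ref{Th 1.2}, $p$-q.e.\ there as well. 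The delicate set is $\partial\Omega$, which may carry positive $p$-capacity and on which weak $W^{1,p}$-convergence alone gives no information. Here I would invoke the Wiener criterion for the $p$-Laplacian (Maz'ya, Kilpel\"ainen--Mal\'y): the uniform lower bound encoded in $\mathcal{W}_w(D)$, with $w(r,R,x)\to+\infty$ as $r\to0$, yields for the capacitary potentials of $\Omega_{\e_k}^c$ a modulus of continuity at boundary points that is \emph{uniform} in $k$. This equicontinuity forces the zero boundary values to be attained stably in the limit, so that no excess mass of $\psi$ can survive on $\partial\Omega$, giving $\psi=0$ $p$-q.e.\ on $\partial\Omega$ and hence on all of $\Omega^c$. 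I expect this transfer of the uniform Wiener estimate into equicontinuity of the potentials to be the main obstacle, since it is precisely the step at which the general $p\le N$ theory departs from the soft $p>N$ argument (where Morrey's embedding and Ascoli--Arzel\`a would render $\psi$ continuous and the conclusion immediate).

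It remains to prove the stability of the class, $\Omega\in\mathcal{W}_w(D)$. Fix $x\in\partial\Omega$ and $0<r<R<1$, and use the Kuratowski lower-limit property $(K_1)$ to select points $x_\e\in\Omega_\e^c$ with $x_\e\to x$, at which the Wiener inequality holds because $\Omega_\e\in\mathcal{W}_w(D)$. Denote by $f_\e(t)$ the integrand $\bigl(C_p(\Omega_\e^c\cap\overline{B(x_\e,t)};B(x_\e,2t))/C_p(\overline{B(x_\e,t)};B(x_\e,2t))\bigr)^{1/(p-1)}$ and by $f(t)$ its analogue at $(x,\Omega)$. I would establish the pointwise bound $f(t)\ge\limsup_\e f_\e(t)$ from two facts: the denominators are translation invariant and hence independent of $\e$, while the numerators obey the upper semicontinuity $C_p(\Omega^c\cap\overline{B(x,t)})\ge\limsup_\e C_p(\Omega_\e^c\cap\overline{B(x_\e,t)})$, a consequence of the inclusion $\limsup_\e(\Omega_\e^c\cap\overline{B(x_\e,t)})\subseteq\Omega^c\cap\overline{B(x,t)}$ granted by $(K_2)$ together with the upper semicontinuity of $p$-capacity along Kuratowski upper limits of compacta. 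Since $0\le f_\e\le1$ and $dt/t$ is integrable on $[r,R]$, the reverse Fatou lemma then gives $\int_r^R f(t)\,dt/t\ge\limsup_\e\int_r^R f_\e(t)\,dt/t\ge\limsup_\e w(r,R,x_\e)$, and the lower semicontinuity of $w$ in its third argument yields $\limsup_\e w(r,R,x_\e)\ge\liminf_\e w(r,R,x_\e)\ge w(r,R,x)$. Thus the Wiener inequality holds for $\Omega$ at $x$, i.e.\ $\Omega\in\mathcal{W}_w(D)$. The capacitary upper semicontinuity is the one delicate ingredient here, and I would treat it with care, since $p$-capacity is in general neither upper nor lower semicontinuous under Hausdorff set convergence and it is exactly the uniform density lower bound of $\mathcal{W}_w(D)$ that rescues the required inequality.
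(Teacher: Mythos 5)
First, a point of reference: the paper itself contains \emph{no proof} of Theorem \ref{Th 1.3}; it is quoted from Bucur and Trebeschi \cite{Bucur_Treb}. So your proposal has to be measured against the argument in that reference. Two of your three blocks are essentially in order. The recovery-sequence condition $(M_1)$ is proved exactly as you say, and needs no Wiener hypothesis. The stability of the class, $\Omega\in\mathcal{W}_w(D)$, is also structurally right, and in fact your closing worry there is backwards: relative $p$-capacity \emph{is} upper semicontinuous along Hausdorff convergence of compacta, with no help from the Wiener bound --- if $K_\e\to K$ with all sets in a fixed compact, then every open $U\supset K$ eventually contains $K_\e$, and $C_p(K;B)=\inf\left\{C_p(\overline{U};B):\ U\ \text{open},\ K\subset U\right\}$ by outer regularity; it is \emph{lower} semicontinuity that fails. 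The only repairs needed in that block are to pick the points $x_\e$ on $\partial\Omega_\e$ (the class $\mathcal{W}_w(D)$ imposes the Wiener inequality \emph{only} at boundary points, not at arbitrary points of $\Omega_\e^c$), which is possible because points of $\Omega$ near $x$ lie in $\Omega_\e$ for small $\e$ while $(K_1)$ supplies points of $\Omega_\e^c$ near $x$, and a routine sandwich for the moving outer ball $B(x_\e,2t)$ in the relative capacity.

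The genuine gaps are both inside $(M_2)$, which is where the theorem lives. First, the ``thin-layer Poincar\'e estimate'' you invoke off $\overline{\Omega}$ is false in the only interesting range $p\le N$ (for $p>N$ the whole theorem is soft, since, as recalled before the statement, $H^c$-convergence is then equivalent to Mosco convergence). An inradius bound does not control a Poincar\'e constant when the complement may have small capacity: let $\Omega_\e$ agree with $\Omega$ and, in $D\setminus\overline{\Omega}$, remove from $D\setminus\overline{\Omega}$ a $\delta_\e$-lattice of closed balls of radius $r_\e\ll\delta_\e$ whose total $p$-capacity tends to zero. One checks $\Omega_\e\,\stackrel{H^c}{\longrightarrow}\,\Omega$, yet for $\eta\in C_0^\infty(D\setminus\overline{\Omega})$ the functions $y_\e=\eta(1-w_\e)$, with $w_\e$ the vanishing capacitary potentials of the balls, belong to $W^{1,p}_0(\Omega_\e)$ and converge to $\eta\ne 0$. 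Hence vanishing of $\psi$ outside $\overline{\Omega}$ cannot follow from $H^c$-convergence plus an inradius argument; it requires the uniform Wiener bound fed into a capacitary (Maz'ya-type) Poincar\'e inequality --- or should simply be bypassed, since $(M_2)$ only needs $\psi=0$ $p$-q.e.\ on $\partial\Omega$ together with the gluing fact that $u=0$ q.e.\ on $\partial\Omega$ implies $u\chi_{\Omega}\in W^{1,p}(\mathbb{R}^N)$, after which Theorem \ref{Th 1.1} applies.

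Second, and more seriously, the boundary step is circular as written. The uniform Wiener criterion does produce a $k$-independent modulus of continuity at boundary points, but for \emph{solutions} of $p$-Laplace-type equations (equivalently, for capacitary potentials), via Maz'ya/Kilpel\"ainen--Mal\'y estimates. The functions $y_k$ in $(M_2)$ are \emph{arbitrary} elements of $W^{1,p}_0(\Omega_{\e_k})$ and enjoy no pointwise boundary regularity at all, so equicontinuity of the potentials of $\Omega_{\e_k}^c$ does not, by itself, say anything about the weak limit $\psi$; your sentence that this equicontinuity ``forces the zero boundary values to be attained stably in the limit'' is precisely the assertion to be proved, and no mechanism transferring the modulus from the potentials to $\psi$ is offered. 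This is exactly why Bucur and Trebeschi do not attack $(M_2)$ directly: they first prove $\gamma_p$-convergence --- convergence of the solutions of the model Dirichlet problems, objects to which the uniform Wiener modulus and an Ascoli--Arzel\`a argument genuinely apply --- and then pass to Mosco convergence of the spaces $W^{1,p}_0$ through the nontrivial equivalence between $\gamma_p$-convergence and Mosco convergence developed in the monotone-operator setting (cf.\ \cite{DMaso_Mur}). That bridge, in one form or another, is the missing idea in your proposal.
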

\begin{theorem}
\label{Th 1.4} Let $N\ge p> N-1$ and let
$\left\{\Omega_\e\right\}_{\e>0}$ be a sequence of open subsets of
$D$ such that $\Omega_\e\,\stackrel{H^c}{\longrightarrow}\,\Omega$
and $\Omega_\e\in \mathcal{O}_l(D)$ for every $\e>0$, where the
class $\mathcal{O}_l(D)$ is defined as follows
\begin{equation}
\mathcal{O}_l(D)=\left\{\Omega\subseteq D\ :\ \sharp\Omega^c\le
l\right\}
\end{equation}
(here by $\sharp$ one denotes the number of connected components).
Then $\Omega\in\mathcal{O}_l(D)$ and the sequence of Sobolev
spaces $\left\{W^{1,\,p}_0(\Omega_\e)\right\}_{\e>0}$ converges in
the sense of Mosco to $W^{1,\,p}_0(\Omega)$.
\end{theorem}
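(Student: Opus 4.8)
The plan is to deduce the statement from Theorem \ref{Th 1.3} by showing that, when $N\ge p>N-1$, the bound on the number of connected components of the complement forces a \emph{uniform} Wiener-type capacity density estimate, so that $\mathcal{O}_l(D)$ embeds (up to $p$-polar corrections) into a single class $\mathcal{W}_w(D)$. First I would settle the stability of the class itself, namely $\Omega\in\mathcal{O}_l(D)$. Recalling that $\Omega_\e\,\stackrel{H^c}{\longrightarrow}\,\Omega$ is equivalent to the Kuratowski convergence $\Omega_\e^c\to\Omega^c$, I would write each complement as a union $\Omega_\e^c=\bigcup_{i=1}^{l}F_\e^i$ of at most $l$ closed connected pieces, pass to the compactification $\overline{D}\cup\{\infty\}$, and invoke a Blaschke selection argument to extract a subsequence along which every $F_\e^i$ converges in the Hausdorff metric to a closed set $F^i$. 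Since a Hausdorff limit of connected sets is connected and $\Omega^c=\bigcup_i F^i$ (a finite union commutes with the limit), the number of components cannot increase, whence $\sharp\,\Omega^c\le l$ and $\Omega\in\mathcal{O}_l(D)$.

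The analytic engine is a capacity estimate for continua, and this is exactly where $p>N-1$ enters: there is a constant $c_0=c_0(N,p)>0$ such that for every connected set $E$ and every ball with $x\in E$ and $\mathrm{diam}(E)\ge 2t$,
\begin{equation}
\frac{C_p\!\left(E\cap\overline{B(x,t)};B(x,2t)\right)}{C_p\!\left(\overline{B(x,t)};B(x,2t)\right)}\ge c_0 .
\end{equation}
Indeed a connected set crossing the annulus $B(x,2t)\setminus B(x,t)$ has Hausdorff dimension at least $1>N-p$, hence carries positive relative $p$-capacity, and the bound is scale invariant by the homogeneity $C_p(\overline{B(x,t)};B(x,2t))\simeq t^{N-p}$. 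Granting this, let $x\in\partial\Omega_\e$ lie in a component $F$ of $\Omega_\e^c$ that is a genuine continuum of diameter $d>0$. For $t<d/2$ the monotonicity of capacity gives the integrand $\ge c_0$, so for all $0<r<R<1$,
\begin{equation}
\int_r^R\left(\frac{C_p(\Omega_\e^c\cap\overline{B(x,t)};B(x,2t))}{C_p(\overline{B(x,t)};B(x,2t))}\right)^{\frac{1}{p-1}}\frac{dt}{t}\ge c_0^{\frac{1}{p-1}}\ln\frac{\min(R,d/2)}{r},
\end{equation}
whose right-hand side tends to $+\infty$ as $r\to0$ and is lower semicontinuous in $x$. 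Provided the diameters of the non-trivial components are bounded below uniformly in $\e$ (say by $2R_0$), this majorizes one common modulus $w(r,R)=c_0^{1/(p-1)}\ln(\min(R,R_0)/r)$, the entire family lies in a single class $\mathcal{W}_w(D)$, and Theorem \ref{Th 1.3} yields the Mosco convergence.

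The hard part will be the caveat ``non-degenerate'': a boundary point may belong to a component of $\Omega_\e^c$ (or of $\Omega^c$) that is a single point, or, more dangerously, to a component whose diameter \emph{collapses} to zero as $\e\to0$. There $C_p(\{x\};B(x,2t))$ vanishes (since $p\le N$), the integrand degenerates, no single modulus $w$ serves the whole family, and the literal inclusion $\mathcal{O}_l(D)\subset\mathcal{W}_w(D)$ breaks down. The way around this is that points, and the collapsing components generally, are $p$-polar: by Theorems \ref{Th 1.1}--\ref{Th 1.2} adjoining a set of zero $p$-capacity does not change $W^{1,p}_0$. Concretely, I would split the at most $l$ components into those staying uniformly non-degenerate and those collapsing; fill in the collapsing ones, obtaining $\widehat{\Omega}_\e\supseteq\Omega_\e$ with $W^{1,p}_0(\widehat{\Omega}_\e)=W^{1,p}_0(\Omega_\e)$ that still $H^c$-converge and now lie in a common $\mathcal{W}_w(D)$; apply Theorem \ref{Th 1.3} to $\widehat{\Omega}_\e$; and transfer the conclusion back, the recovery condition $(M_1)$ being salvaged across the collapsing holes precisely because their limiting $p$-capacity is zero, via standard $p$-capacitary cutoffs. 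Checking that the filled-in family retains both the $H^c$-convergence and the uniform Wiener bound is the genuinely delicate bookkeeping, and it is here that $N\ge p$ (points are polar) and $p>N-1$ (persistent continua are non-polar) are used simultaneously.
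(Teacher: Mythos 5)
The paper never proves Theorem \ref{Th 1.4}: it is quoted directly from Bucur--Trebeschi \cite{Bucur_Treb}, so your proposal has to stand on its own, and in outline it does reconstruct the standard argument from that reference: closedness of $\mathcal{O}_l(D)$ under $H^c$-convergence via Hausdorff/Kuratowski limits of continua, a uniform capacity-density estimate for continua (this is exactly where $p>N-1$ enters), reduction to the Wiener class $\mathcal{W}_w(D)$ of Theorem \ref{Th 1.3}, and a separate treatment of components of the complement that degenerate to points (this is exactly where $N\ge p$ enters). The skeleton is the right one. However, one of your intermediate claims is genuinely false, and it sits at the crux of the argument: you assert that filling in the collapsing components yields $\widehat{\Omega}_\e\supseteq\Omega_\e$ with $W^{1,p}_0(\widehat{\Omega}_\e)=W^{1,p}_0(\Omega_\e)$. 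This contradicts your own key lemma. For $p>N-1$ a continuum of \emph{positive} diameter, however small, has strictly positive $p$-capacity (non-polarity of continua is precisely the content of the regime $p>N-1$), so deleting it from the complement strictly enlarges the Sobolev space; the equality holds only for components that are literally single points. Thus the ``transfer back'' cannot be an identification of spaces.

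The repair is asymmetric, and spelling it out is the real content of the proof. Condition $(M_2)$ for the original domains does come for free: from the inclusion $W^{1,p}_0(\Omega_\e)\subseteq W^{1,p}_0(\widehat{\Omega}_\e)$, Mosco convergence of the filled domains, and the identity $W^{1,p}_0(\widehat{\Omega})=W^{1,p}_0(\Omega)$, which holds because the filled-in \emph{limit} sets are points, polar since $p\le N$ (Theorems \ref{Th 1.1}--\ref{Th 1.2}). Condition $(M_1)$, by contrast, genuinely requires correcting the recovery sequence $\widehat{y}_\e$ by capacitary cutoffs $1-u^i_\e$, where $u^i_\e$ is the potential of the collapsing component $F^i_\e$, and this forces two verifications you leave implicit: first, $C_p(F^i_\e;\cdot)\to 0$, which follows by monotonicity since $F^i_\e$ lies in a ball of radius $\mathrm{diam}\,F^i_\e\to 0$ and points are polar; second, $\|\widehat{y}_\e\,u^i_\e\|_{W^{1,p}}\to 0$, which is not immediate because $\widehat{y}_\e$ need not be bounded --- one truncates and uses equi-integrability of $|\nabla\widehat{y}_\e|^p$ coming from the strong convergence. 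You do gesture at these cutoffs, but as written your text simultaneously asserts the (false) equality of spaces and the need for cutoffs, which is inconsistent. Two smaller gaps: the uniform constant $c_0$ in the continuum estimate does \emph{not} follow from the remark that continua have Hausdorff dimension $1>N-p$; positive dimension gives positive capacity for each fixed set, but no bound independent of the continuum. You need the quantitative potential-theoretic lemma (a continuum joining $x$ to $\partial B(x,t)$ has relative capacity comparable to that of a segment, hence to $C_p(\overline{B(x,t)};B(x,2t))$), see \cite{Heinonen}. Finally, the split into non-degenerate versus collapsing components is only well defined after a Blaschke extraction, so the whole construction is subsequence-dependent; you should close the argument with the standard remark that Mosco convergence of the full family follows once every subsequence admits a further subsequence Mosco-converging to the same limit space.
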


In the meantime, the perturbation in $H^c$-topology (without some additional
assumptions) may be very irregular. It means that the continuity of
the mapping $\Omega\mapsto y_\Omega$, which associates to every
$\Omega$ the corresponding solution $y_{\,\Omega}$ of a Dirichlet
boundary problem \eqref{1.8}--\eqref{1.9}, may fail (see, for instance,
\cite{BuBu,DMaso_Ebob}). In view
of this, we introduce one more concept of the set convergence.
Following Dancer \cite{Dancer} (see also \cite{Daners}), we say
that
\begin{definition}
\label{Def 1.2} A sequence $\left\{\Omega_\e\right\}_{\e>0}$ of
open subsets of $D$ topologically converges to an open set
$\Omega\subseteq D$ ( in symbols
$\Omega_\e\,\stackrel{\mathrm{top}}{\longrightarrow}\, \Omega$) if
there exists a compact set $K_0\subset \Omega$ of $p$-capacity
zero $\left(C_p(K_0,D)=0\right)$ and a compact set $K_1\subset
\mathbb{R}^N$ of Lebesgue measure zero such that
\begin{enumerate}
\item[$(D_1)$] $\Omega^\prime\subset\subset \Omega\setminus K_0$
implies that $\Omega^\prime\subset\subset \Omega_\e$ for $\e$
small enough;
\item[$(D_2)$] for any open set $U$ with $\overline{\Omega}\cup
K_1\subset U$, we have $\Omega_\e\subset U$ for $\e$ small enough.
\end{enumerate}
\end{definition}

Note that without supplementary regularity assumptions on the
sets, there is no connection between topological set convergence, which is sometimes called \textquotedblleft convergence in the sense of compacts\textquotedblright\,
and the set convergence in the Hausdorff complementary topology (for examples and details see Remark \ref{Rem Ap.1} in the Appendix).

%%%%%%%%%%%%%%%%%%%%%%%%%%%%%%%%%%%%%%%%%%%%%%%%%%%%%%%%%%%%

\section{Setting of the optimal control problem and existence result}
\label{Sec 2}

 Let $\xi_{\,1}$, $\xi_2$ be given functions of
$L^\infty(D)$ such that $0\le\xi_1(x)\le \xi_2(x)$ a.e. in $D$. Let
$\left\{Q_1,\dots,\,Q_N\right\}$ be a collection of nonempty
compact convex subsets of $W^{-1,\,q}(D)$. To define the class of
admissible controls, we introduce two sets
\begin{gather}
\label{2.4} U_{b}=\left\{\left. \mathcal{U}=[a_{i\,j}]\in
M_{p}^{\alpha,\beta}(D)\right| \xi_1(x)\leq a_{i\,j}(x)\leq
\xi_2(x)\;\mbox{a.e. in}\; D,\
\forall\,i,j=1,\dots,N\right\},\\[1ex]
\label{2.5} U_{sol}=\left\{\left. \mathcal{U}=[{{u}}_1,\dots,{{u}}_N]\in
M_{p}^{\alpha,\beta}(D)\right| \mathrm{div}\,{{u}}_i\in Q_i,\
\forall\,i=1,\dots,N\right\},
\end{gather}
assuming that the intersection $U_{b}\cap U_{sol}\subset
L^\infty(D;\mathbb{R}^{N\times N})$ is nonempty.
\begin{definition}
\label{Def 2.6} We say that a matrix $\mathcal{U}=[a_{i\,j}]$ is an
admissible control of  solenoidal type
if $\mathcal{U}\in U_{ad}:=U_{b}\cap U_{sol}$.
\end{definition}
\begin{remark}\label{rem 1.8}
As was shown in \cite{CUO_09} the set $U_{ad}$ is compact with respect to weak-$\ast$ topology of the space $L^\infty(D;\mathbb{R}^{N\times N})$.
\end{remark}

Let us consider the following optimal control problem:
\begin{gather}
\label{2.7} \text{Minimize }\ \Big\{I_\Omega(\mathcal{U},y,z)=\int_\Omega
|z(x)-z_d(x)|^p\,dx\Big\},
\end{gather}
subject to the constraints
\begin{gather}
\label{2.7a} \int_\Omega \left(\mathcal{U}(x)[(\nabla y)^{p-2}]\nabla y,
\nabla v\right)_{\mathbb{R}^N}\,dx + \int_\Omega |y|^{p-2}y
v\,dx=\left< f, v\right>_{W^{1,p}_0(\Omega)},\  \forall\,v\in W^{1,p}_0(\Omega),\\
\label{2.7b} \mathcal{U}\in U_{ad},\quad y\in W^{1,p}_0(\Omega),\\
\label{2.7c} \int_\Omega z \,\phi \, dx + \int_\Omega B F(y,z)\,\phi\,dx=\int_\Omega g\,\phi \,dx,
\end{gather}
where $f\in W^{-1,q}(D)$, $g\in L^p(D)$, and $z_d\in L^p(D)$ are given distributions.

Hereinafter, $\Xi_{sol}\subset L^\infty(D;\mathbb{R}^{N\times
N})\times W^{1,p}_0(\Omega)\times L^p(\Omega)$ denotes the set of all admissible triplets to the
optimal control problem \eqref{2.7}--\eqref{2.7c}.
\begin{definition}\label{def_tau}
Let $\tau$ be the topology on the set
$L^\infty(D;\mathbb{R}^{N\times N})\times W^{1,p}_0(\Omega)\times L^p(\Omega)$
which we define as a product of the weak-$\ast$ topology of
$L^\infty(D;\mathbb{R}^{N\times N})$, the weak topology of
$W^{1,p}_0(\Omega)$, and the weak topology of $L^p(\Omega)$.
\end{definition}

Further we use the following result (see \cite{CUO_09, KogutLeugering2011}).
\begin{proposition}
\label{Prop 1.16} For each $\mathcal{U}\in M_{p}^{\alpha,\beta}(D)$ and every $f\in
W^{-1,\,q}(D)$, a weak solution $y$ to variational
problem \eqref{2.7a}--\eqref{2.7b} satisfies the estimate
\begin{equation}
\label{1.17}
\|y\|^p_{W^{1,p}_0(\Omega)}\le
C\|f\|^q_{W^{-1,\,q}(D)},
\end{equation}
where  $C$ is a constant depending only on $p$ and $\alpha$.
\end{proposition}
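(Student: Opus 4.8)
The plan is to establish \eqref{1.17} as a standard a priori energy estimate, obtained by testing the weak formulation against the solution itself and then invoking the strong coercivity assumption \eqref{1.5}. First I would fix $\mathcal{U}\in M_{p}^{\alpha,\beta}(D)$ and $f\in W^{-1,q}(D)$, let $y\in W^{1,p}_0(\Omega)$ be the (unique) weak solution guaranteed by the existence theory preceding the proposition, and choose the admissible test function $v=y$ in the identity \eqref{2.7a}. This yields
\[
\int_\Omega \left(\mathcal{U}(x)[(\nabla y)^{p-2}]\nabla y,\nabla y\right)_{\mathbb{R}^N}dx+\int_\Omega |y|^{p}\,dx=\left<f,y\right>_{W^{1,p}_0(\Omega)},
\]
since $|y|^{p-2}y\cdot y=|y|^{p}$.

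Next I would bound the left-hand side from below. Applying the coercivity inequality \eqref{1.5} pointwise with $\zeta=\nabla y(x)$ gives $\left(\mathcal{U}(x)[(\nabla y)^{p-2}]\nabla y,\nabla y\right)_{\mathbb{R}^N}\ge\alpha\,|\nabla y|_p^{p}$ a.e. in $\Omega$, so that the gradient term is at least $\alpha\int_\Omega\sum_{k}|\partial y/\partial x_k|^{p}\,dx$; recall this integrand is precisely the one appearing in the definition \eqref{0} of the $W^{1,p}_0$-norm. Combining this with the zero-order term and factoring out $\min\{\alpha,1\}$, the left-hand side is bounded below by $\min\{\alpha,1\}\,\|y\|^{p}_{W^{1,p}_0(\Omega)}$. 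For the right-hand side I would use the duality estimate $\left<f,y\right>_{W^{1,p}_0(\Omega)}\le\|f\|_{W^{-1,q}(\Omega)}\,\|y\|_{W^{1,p}_0(\Omega)}$, together with the observation that extension by zero embeds $W^{1,p}_0(\Omega)$ isometrically into $W^{1,p}_0(D)$, whence $\|f\|_{W^{-1,q}(\Omega)}\le\|f\|_{W^{-1,q}(D)}$.

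Assembling the two bounds gives $\min\{\alpha,1\}\,\|y\|^{p}_{W^{1,p}_0(\Omega)}\le\|f\|_{W^{-1,q}(D)}\,\|y\|_{W^{1,p}_0(\Omega)}$. If $y=0$ the claim is trivial; otherwise I would divide by $\|y\|_{W^{1,p}_0(\Omega)}$ to obtain $\min\{\alpha,1\}\,\|y\|^{p-1}_{W^{1,p}_0(\Omega)}\le\|f\|_{W^{-1,q}(D)}$, and then raise both sides to the power $q=p/(p-1)$. Since $(p-1)q=p$, this produces exactly \eqref{1.17} with the explicit constant $C=\left(\min\{\alpha,1\}\right)^{-q}$, which depends only on $\alpha$ and $p$ (through $q$), as claimed.

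The argument is essentially routine, so there is no serious obstacle; the only points demanding care are the exponent bookkeeping in the final step (verifying $(p-1)q=p$ so that the $p$-th power of the norm appears on the left and the $q$-th power of $\|f\|$ on the right) and the verification that the restriction inequality $\|f\|_{W^{-1,q}(\Omega)}\le\|f\|_{W^{-1,q}(D)}$ is legitimate, which rests on the zero-extension embedding justified earlier via the quasi-continuity results (Theorems \ref{Th 1.1}--\ref{Th 1.2}). I would also remark that the coercivity constant enters only through $\min\{\alpha,1\}$, so no use is made of the upper bound $\beta$, consistent with the statement that $C$ is independent of the particular control $\mathcal{U}$.
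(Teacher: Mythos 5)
Your proof is correct, and it is the standard argument: test \eqref{2.7a} with $v=y$, apply the coercivity condition \eqref{1.5} pointwise, estimate the right-hand side by duality together with the isometric zero-extension $W^{1,p}_0(\Omega)\hookrightarrow W^{1,p}_0(D)$, and divide and raise to the power $q$ (the paper itself gives no proof of Proposition \ref{Prop 1.16}, citing \cite{CUO_09,KogutLeugering2011}, where precisely this energy estimate is carried out). The only cosmetic remark is that the zero-extension isometry follows directly from the definition of $W^{1,p}_0(\Omega)$ as the closure of $C^\infty_0(\Omega)$ with the norm \eqref{0}, so the appeal to the quasi-continuity results of Theorems \ref{Th 1.1}--\ref{Th 1.2} is unnecessary, though harmless.
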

\begin{proposition}\label{prop 1.15} Let
 $B:L^q(\Omega)\to L^p(\Omega)$ and $F:W_0^{1,p}(\Omega)\times L^p(\Omega)\to L^q(\Omega)$ be operators satisfying all conditions of Theorem \ref{Th 1.1*}.
Then the set
\begin{multline*}
\Xi_{sol}=\big\{(\mathcal{U},y,z)\in L^\infty(D;\mathbb{R}^{N\times N})\times W_0^{1,p}(\Omega)\times L^p(\Omega):\\ A(\mathcal{U},y)=f,\; z+B F(y,z)=g)\big\}
\end{multline*}
is nonempty for every $f\in W^{-1,q}(D)$ and $g\in L^p(D)$.
\end{proposition}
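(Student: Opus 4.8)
The plan is to prove nonemptiness by a straightforward two-stage composition of the existence results already at our disposal: the existence--uniqueness theorem for the monotone Dirichlet problem \eqref{1.7} and the Hammerstein solvability result of Theorem \ref{Th 1.1*}. Since $\Xi_{sol}$ is nonempty as soon as it contains a single triplet, it suffices to exhibit one admissible $(\mathcal{U},y,z)$. The whole argument reduces to the following chain: fix an admissible control, solve the elliptic equation to obtain a state $y$, then feed this $y$ into the Hammerstein equation and invoke Theorem \ref{Th 1.1*} to produce $z$.

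First I would use the standing assumption that $U_{ad}=U_b\cap U_{sol}$ is nonempty (imposed right after \eqref{2.5}) to select an arbitrary matrix $\mathcal{U}_0\in U_{ad}$. By definition $U_{ad}\subset M_p^{\alpha,\beta}(D)$, so $\mathcal{U}_0$ enjoys the growth, monotonicity and coercivity properties \eqref{1.3}--\eqref{1.5}. Consequently the operator $A(\mathcal{U}_0,\cdot)$ is coercive, strictly monotone and demi-continuous, and the existence theory for monotone operators quoted after \eqref{1.7} yields a unique weak solution $y_0\in W_0^{1,p}(\Omega)$ of the variational problem \eqref{2.7a}, i.e. $A(\mathcal{U}_0,y_0)=f$. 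The a priori bound of Proposition \ref{Prop 1.16} then gives $\|y_0\|_{W_0^{1,p}(\Omega)}\le \big(C\|f\|_{W^{-1,q}(D)}^q\big)^{1/p}$, so $y_0$ lies in a fixed bounded subset $Y_0$ of $Y=W_0^{1,p}(\Omega)$; this is exactly the setting in which Theorem \ref{Th 1.1*} is formulated.

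Next, with $y=y_0$ held fixed, I would apply Theorem \ref{Th 1.1*}. By hypothesis $B$ and $F$ satisfy all its assumptions: $B:L^q(\Omega)\to L^p(\Omega)$ is linear, continuous, positive and admits a right inverse $B^{-1}_{r}$, while $F(y_0,\cdot)$ has u.s.b.v., is radially continuous, and obeys the coercivity inequality $\langle F(y_0,z)-B^{-1}_{r}g,z\rangle_Z\ge 0$ whenever $\|z\|_Z>\lambda$. Theorem \ref{Th 1.1*} therefore asserts that the set $\mathcal{H}(y_0)=\{z\in L^p(\Omega):\ z+BF(y_0,z)=g\}$ is nonempty (indeed weakly compact); choosing any $z_0\in\mathcal{H}(y_0)$, the triplet $(\mathcal{U}_0,y_0,z_0)$ satisfies $A(\mathcal{U}_0,y_0)=f$ and $z_0+BF(y_0,z_0)=g$, hence $(\mathcal{U}_0,y_0,z_0)\in\Xi_{sol}$, which proves the claim.

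No step is genuinely hard; the only points requiring care are bookkeeping. One must check that the abstract identifications $Y=W_0^{1,p}(\Omega)$, $Z=L^p(\Omega)$, $Z^\ast=L^q(\Omega)$ make the mapping properties of $B:Z^\ast\to Z$ and $F:Y\times Z\to Z^\ast$ coincide with those in Theorem \ref{Th 1.1*}, and that the distributional identity $z+BF(y,z)=g$ is equivalent to the integral form \eqref{2.7c}. One should also note that Theorem \ref{Th 1.1*} requires $y$ to range over a bounded set $Y_0$, which is precisely why the a priori estimate of Proposition \ref{Prop 1.16} is invoked: it guarantees that the single state $y_0$ we produced is admissible as an input to the Hammerstein result. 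Everything else is a direct quotation of the two cited existence theorems.
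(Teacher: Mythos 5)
Your proposal is correct and follows essentially the same route as the paper's own proof: fix an arbitrary $\mathcal{U}\in U_{ad}$ (nonempty by assumption), solve the monotone Dirichlet problem \eqref{2.7a}--\eqref{2.7b} to get the unique state $y$ satisfying the estimate \eqref{1.17}, and then invoke Theorem \ref{Th 1.1*} to conclude that $\mathcal{H}(y)$ is nonempty, yielding an admissible triplet. The only difference is that you spell out the bookkeeping (the identifications $Y=W_0^{1,p}(\Omega)$, $Z=L^p(\Omega)$, $Z^\ast=L^q(\Omega)$ and the role of the bounded set $Y_0$) which the paper leaves implicit.
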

\begin{proof} The proof is given in Appendix.
\end{proof}

\begin{theorem}
\label{Th 2.8} Assume the following conditions hold:
\begin{itemize}
\item The operators $B:L^q(\Omega)\to L^p(\Omega)$ and $F:W_0^{1,p}(\Omega)\times L^p(\Omega)\to L^q(\Omega)$ satisfy conditions of Theorem \ref{Th 1.1*};
\item The operator $F(\cdot, z):W_0^{1,p}(\Omega)\to L^q(\Omega)$ is compact in the following sense: if $y_k\to y_0$ weakly in $W_0^{1,p}(\Omega)$, then  $F(y_k,z)\to F(y_0,z)$ strongly in $L^q(\Omega)$.
\end{itemize}
Then for every $f\in W^{-1,\,q}(D)$ and $g\in L^p(D)$,  the set $\Xi_{sol}$
is sequentially $\tau$-closed, i.e. if a sequence $\{(\mathcal{U}_k,y_k,z_k)\in \Xi_{sol}\}_{k\in\mathbb{N}}$ $\tau$-converges to a triplet $(\mathcal{U}_0,y_0,z_0)\in L^\infty(\Omega;\mathbb{R}^{N\times N})\times W^{1,p}_0(\Omega)\times L^p(\Omega)$, then $\mathcal{U}_0\in U_{ad}$, $y_0=y(\mathcal{U}_0)$, $z_0\in\mathcal{H}(y_0)$, and, therefore,
$(\mathcal{U}_0,y_0,z_0)\in\Xi_{sol}$.
\end{theorem}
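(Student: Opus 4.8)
The plan is to establish the three closure properties $\mathcal{U}_0\in U_{ad}$, $y_0=y(\mathcal{U}_0)$ and $z_0\in\mathcal{H}(y_0)$ in this order, handling the two state relations by the method of monotone operators (Minty's device). I expect the genuine difficulty to be the passage to the limit in the quasilinear elliptic operator, where the solenoidal constraint on the controls is indispensable. The membership $\mathcal{U}_0\in U_{ad}$ is immediate: by Remark \ref{rem 1.8} the set $U_{ad}$ is compact, hence sequentially closed, with respect to the weak-$\ast$ topology of $L^\infty(D;\mathbb{R}^{N\times N})$, and $\mathcal{U}_k\rightharpoonup^\ast\mathcal{U}_0$ by the definition of $\tau$-convergence.

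For the elliptic relation I would first extract the limit flux. By Proposition \ref{Prop 1.16} the sequence $\{y_k\}$ is bounded in $W^{1,p}_0(\Omega)$, and the growth bound \eqref{1.3} shows that the fluxes $\xi_k:=\mathcal{U}_k(x)[(\nabla y_k)^{p-2}]\nabla y_k$ are bounded in $L^q(\Omega;\mathbb{R}^N)$, so along a subsequence $\xi_k\rightharpoonup\xi_0$ weakly in $L^q(\Omega;\mathbb{R}^N)$. Since the embedding $W^{1,p}_0(\Omega)\hookrightarrow L^p(\Omega)$ is compact, $y_k\to y_0$ strongly in $L^p(\Omega)$, hence $|y_k|^{p-2}y_k\to|y_0|^{p-2}y_0$ strongly in $L^q(\Omega)$; passing to the limit in \eqref{2.7a} yields $-\mathrm{div}\,\xi_0+|y_0|^{p-2}y_0=f$ in the weak sense. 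It then remains to identify $\xi_0=\mathcal{U}_0[(\nabla y_0)^{p-2}]\nabla y_0$, after which the strong monotonicity \eqref{1.4}--\eqref{1.5} and the uniqueness for \eqref{1.7} force $y_0=y(\mathcal{U}_0)$.

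The identification is the heart of the proof and I would carry it out by Minty's trick. Testing \eqref{2.7a} with $v=y_k$ and the limit equation with $v=y_0$, together with the strong $L^p$-convergence of the zeroth-order term, gives the energy identity $\lim_k\int_\Omega(\xi_k,\nabla y_k)_{\mathbb{R}^N}\,dx=\langle f,y_0\rangle_{W^{1,p}_0(\Omega)}-\int_\Omega|y_0|^p\,dx=\int_\Omega(\xi_0,\nabla y_0)_{\mathbb{R}^N}\,dx$. Feeding an arbitrary field $\Phi$ into the monotonicity inequality \eqref{1.4} and integrating, I would pass to the limit in $\int_\Omega(\mathcal{U}_k([(\nabla y_k)^{p-2}]\nabla y_k-[\Phi^{p-2}]\Phi),\nabla y_k-\Phi)_{\mathbb{R}^N}\,dx\ge0$ term by term. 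Three of the four terms converge by the energy identity, by weak convergence of $\xi_k$ against the fixed field $\Phi$, and by weak-$\ast$ convergence of $\mathcal{U}_k$ against the fixed $L^1$-density $([\Phi^{p-2}]\Phi,\Phi)_{\mathbb{R}^N}$. The only critical term is the product $\int_\Omega(\mathcal{U}_k[\Phi^{p-2}]\Phi,\nabla y_k)_{\mathbb{R}^N}\,dx$ of two merely weakly convergent sequences, and this is exactly where the solenoidal constraint \eqref{2.5} is indispensable: for smooth $\Phi$ one writes this pairing as $-\langle\mathrm{div}(\mathcal{U}_k[\Phi^{p-2}]\Phi),y_k\rangle$ and exploits that, by compactness of the sets $Q_i$, the column divergences $\mathrm{div}\,u^{(k)}_i$ converge strongly in $W^{-1,q}(D)$; this upgrades the critical pairing to a strong–weak product and lets it pass to the limit, after which a density argument extends the resulting inequality $\int_\Omega(\xi_0-\mathcal{U}_0[\Phi^{p-2}]\Phi,\nabla y_0-\Phi)_{\mathbb{R}^N}\,dx\ge0$ to every $\Phi\in L^p(\Omega;\mathbb{R}^N)$. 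Choosing $\Phi=\nabla y_0-t\Psi$, dividing by $t>0$, letting $t\to0^+$ and using the demicontinuity of $\Phi\mapsto\mathcal{U}_0[\Phi^{p-2}]\Phi$ then gives $\xi_0=\mathcal{U}_0[(\nabla y_0)^{p-2}]\nabla y_0$. I emphasize that under mere weak-$\ast$ convergence of the coefficients the weak limit of $A(\mathcal{U}_k,\cdot)$ is in general a homogenized operator rather than $A(\mathcal{U}_0,\cdot)$, and it is precisely the solenoidal structure that, through this compensated-compactness mechanism, rules out relaxation.

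For the Hammerstein relation I would argue likewise. Applying the right inverse to \eqref{2.7c} gives $F(y_k,z_k)=B^{-1}_r(g-z_k)$; since $\{z_k\}$ is bounded and, by the positivity of $B$, $\langle F(y_k,z_k),z_k\rangle_Z\le\langle B^{-1}_rg,z_k\rangle_Z$ stays bounded, Remark \ref{Rem 1.5} yields a uniform bound on $F(y_k,z_k)$ in $L^q(\Omega)$, so $w_k:=F(y_k,z_k)\rightharpoonup w_0$ weakly in $L^q(\Omega)$ along a subsequence. The linearity and continuity of $B$ give $Bw_k\rightharpoonup Bw_0$, whence the limit of \eqref{2.7c} reads $z_0+Bw_0=g$, and it remains to show $w_0=F(y_0,z_0)$. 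Using the positivity of $B$ (convexity and weak lower semicontinuity of $z\mapsto\langle B^{-1}_rz,z\rangle_Z$) I would first obtain $\limsup_k\langle F(y_k,z_k),z_k\rangle_Z\le\langle w_0,z_0\rangle_Z$; then, writing the u.s.b.v. inequality \eqref{1.9.1} for the pair $(z_k,v)$ and passing to the limit — the cross terms converging by the compactness assumption on $F(\cdot,v)$ (strong $L^q$-convergence of $F(y_k,v)$ paired with $z_k\rightharpoonup z_0$) and by $\||z_k-v\||_Z\to\||z_0-v\||_Z$ for the compact seminorm — I arrive at $\langle w_0-F(y_0,v),z_0-v\rangle_Z\ge-\inf_{y\in Y_0}C_y(R;\||z_0-v\||_Z)$ for every $v$. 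Finally, setting $v=z_0-\tau\zeta$, dividing by $\tau$, letting $\tau\to0^+$ and invoking the radial continuity of $F(y_0,\cdot)$ together with $\tfrac1tC_y(r,t)\to0$, I conclude $\langle w_0-F(y_0,z_0),\zeta\rangle_Z\ge0$ for all $\zeta$, i.e. $w_0=F(y_0,z_0)$ and $z_0\in\mathcal{H}(y_0)$, which completes the proof.
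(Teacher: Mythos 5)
Your proposal is correct, and on the part the paper actually proves in detail --- the closure of the Hammerstein relation --- it follows essentially the same route: a uniform $L^q(\Omega)$-bound on $F(y_k,z_k)$ from the positivity of $B$ together with Remark \ref{Rem 1.5}, extraction of a weak limit $w_0$, passage to the limit equation $z_0+Bw_0=g$, and identification $w_0=F(y_0,z_0)$ via the u.s.b.v.\ inequality, the compactness of $F(\cdot,v)$ in the first argument, and the compact seminorm. The differences are of self-containedness rather than method. Where the paper simply quotes \cite{OKogut2010,Kupenko2011} for the facts $\mathcal{U}_0\in U_{ad}$ and $y_0=y(\mathcal{U}_0)$, you reconstruct the whole argument: flux extraction, energy identity, Minty's trick, and the treatment of the critical cross term $\int_\Omega(\mathcal{U}_k[\Phi^{p-2}]\Phi,\nabla y_k)_{\mathbb{R}^N}\,dx$ through the strong $W^{-1,q}(D)$-convergence of the column divergences forced by compactness of the sets $Q_i$ --- precisely the compensated-compactness mechanism of the cited references, and correctly identified by you as the only place where the solenoidal constraint enters. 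Likewise, where the paper invokes \cite[Theorem 1.1.2]{AMJA} to pass from the limit inequality $\langle F(y_0,v)-\nu_0,v-z_0\rangle_{L^p(\Omega)}\ge-\inf_{y}C_y(\cdot)$ to $\nu_0=F(y_0,z_0)$, you prove that step directly by the radial perturbation $v=z_0-\tau\zeta$, division by $\tau$, radial continuity, and the condition $\frac{1}{t}C_y(r,t)\to 0$. Finally, you replace the paper's derivation of the sharp bound $\|z_k\|_{L^p(\Omega)}\le\lambda$ (obtained from the coercivity hypothesis on $F$) by the observation that weak convergence of $\{z_k\}$ already yields boundedness, which is all the subsequent argument needs. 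What your version buys is a proof readable without the external references; what the paper's version buys is brevity and a quantitative bound on the states that is reused later (e.g.\ in Theorem \ref{Th 4.12}).

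Two small inaccuracies, neither of which damages the argument. First, the identity $F(y_k,z_k)=B^{-1}_r(g-z_k)$ is unjustified: $B^{-1}_r$ is only a right inverse ($BB^{-1}_r=\mathrm{id}$), not a left inverse, so it cannot be "applied" to $BF(y_k,z_k)=g-z_k$. Fortunately you never use this identity; the inequality $\langle F(y_k,z_k),z_k\rangle_{L^p(\Omega)}\le\langle B^{-1}_rg,z_k\rangle_{L^p(\Omega)}$ follows instead from writing $z_k=B\bigl(B^{-1}_rg-F(y_k,z_k)\bigr)$ and using the positivity of $B$, which is what your bound actually rests on. Second, the weakly lower semicontinuous functional is $w\mapsto\langle w,Bw\rangle_{L^p(\Omega)}$ on $L^q(\Omega)$ (convex and continuous, hence weakly l.s.c., by positivity, linearity and continuity of $B$), not $z\mapsto\langle B^{-1}_rz,z\rangle$; your actual use of it, $\liminf_{k}\langle w_k,Bw_k\rangle_{L^p(\Omega)}\ge\langle w_0,Bw_0\rangle_{L^p(\Omega)}$, is the correct statement and coincides with the paper's property \eqref{1.21.1}.
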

\begin{proof}
Let $\{(\mathcal{U}_k,y_k,z_k)\}_{k\in\mathbb{N}}\subset \Xi_{sol}$ be any $\tau$-convergent sequence of admissible triplets to the optimal control problem \eqref{2.7}--\eqref{2.7c}, and let $(\mathcal{U}_0,y_0,z_0)$ be its $\tau$-limit in the sense of Definition \ref{def_tau}. Since the controls $\{\mathcal{U}_k\}_{k\in\mathbb{N}}$ belong to the set of solenoidal matrices $U_{sol}$ (see \eqref{2.5}), it follows from results given in \cite{OKogut2010,Kupenko2011} that $\mathcal{U}_0\in U_{ad}$ (see also Remark \ref{rem 1.8})  and $y_0=y(\mathcal{U}_0)$. It remains to show that  $z_0\in\mathcal{H}(y_0)$. To this end, we have to pass to the limit in equation
\begin{equation}\label{1.22}
z_k+BF(y_k,z_k)=g
\end{equation}
as $k\to\infty$ and get the limit pair $(y_0,z_0)$ is related by the equation
$
z_0+BF(y_0,z_0)=g.
$
With that in mind, let us rewrite equation  \eqref{1.22} in the following way
$$
B^\ast w_k +BF(y_k,B^\ast w_k)=g,
$$
where $w_k\in L^q(\Omega)$, $B^\ast:L^q(\Omega)\to L^p(\Omega)$ is the conjugate operator for $B$, i.e. $\langle B\nu, w\rangle_{L^q(\Omega)}=\langle B^\ast w,\nu\rangle_{L^q(\Omega)}$ and $B^\ast w_k=z_k$. Then, for every $k\in \mathbb{N}$, we have the equality
\begin{equation}
\label{1.18}
 \langle B^\ast w_k,w_k\rangle_{L^p(\Omega)}+\langle F(y_k,B^\ast w_k),B^\ast w_k\rangle_{L^p(\Omega)}=\langle g,w_k\rangle_{L^p(\Omega)}.
 \end{equation}
Taking into account the transformation
 $$
 \langle g,w_k\rangle_{L^p(\Omega)}=\langle B B^{-1}_r g,w_k\rangle_{L^p(\Omega)}=\langle B^{-1}_r  g,B^\ast w_k\rangle_{L^p(\Omega)},
 $$
we obtain
\begin{equation}
\label{1.18.1}
 \langle w_k,B^\ast w_k\rangle_{L^p(\Omega)}+\langle F(y_k,B^\ast w_k)-B^{-1}_r  g,B^\ast w_k\rangle_{L^p(\Omega)}=0.
\end{equation}
 The first term in \eqref{1.18.1} is strictly positive for every $w_k\neq 0$, hence, the second one must be negative.
In view of the initial assumptions, namely,
$$
\langle F(y,x)-B^{-1}_r  g,x\rangle_{L^p(\Omega)}\ge 0\ \text{ if only}\ \|x\|_{L^p(\Omega)}>\lambda,
$$
we conclude that
\begin{equation}
\label{7*}
\|B^\ast w_k\|_{L^p(\Omega)}=\|z_k\|_{L^p(\Omega)}\le \lambda.
\end{equation}
Since the linear positive operator $B^\ast$ cannot map unbounded sets into bounded ones, it follows that $\|w_k\|_{L^q(\Omega)}\le \lambda_1$.
As a result, see \eqref{1.18},  we have
\begin{equation}
\label{1.19}
\langle  F(y_k,B^\ast w_k),B^\ast w_k\rangle_{L^p(\Omega)}=-\langle B^\ast w_k,w_k\rangle_{L^p(\Omega)}+\langle g, w_k\rangle_{L^p(\Omega)},
\end{equation}
and, therefore, $\langle  F(y_k,B^\ast w_k),B^\ast w_k\rangle_{L^p(\Omega)}\le c_1$. Indeed, all terms in the right-hand side of \eqref{1.19} are bounded provided the sequence $\{w_k\}_{k\in\mathbb{N}}\subset L^q(\Omega)$ is bounded and operator $B$ is linear and continuous.  Hence, in view of Remark \ref{Rem 1.5}, we get
$$
\|F(y_k,B^\ast w_k)\|_{L^q(\Omega)}=\|F(y_k,z_k)\|_{L^q(\Omega)}\le c_2\
\text{ as }\ \|z_k\|_{L^p(\Omega)}\le \lambda.
$$
Since the right-hand side of \eqref{1.19}  does not depend on $y_k$, it follows that the constant $c_2>0$ does not depend on $y_k$ either.

Taking these arguments into account, we may suppose that up to a subsequence we have the weak convergence $F(y_k,z_k)\to \nu_0$ in $L^q(\Omega)$. As a result, passing to the limit in  \eqref{1.22}, by continuity of $B$, we finally get
\begin{equation}\label{1.20***}
z_0+B\nu_0=g.
\end{equation}
It remains to show that $\nu_0=F(y_0,z_0)$. Let us take an arbitrary element $z\in L^p(\Omega)$ such that $\|z\|_{L^p(\Omega)}\le \lambda$. Using the fact that $F$ is an operator with u.s.b.v., we have
$$
\langle F(y_k,z)-F(y_k,z_k),z-z_k\rangle_{L^p(\Omega)}\ge  -\inf_{y_k\in Y_0}C_{y_k}(\lambda;\||z-z_k\||_{L^p(\Omega)}),
$$
where $Y_0=\{y\in W_0^{1,p}(\Omega):\;y\mbox{ satisfies }\eqref{1.17}\}$, or, after transformation,
\begin{multline}
\label{1.21***}
\langle F(y_k,z),z-z_k\rangle_{L^p(\Omega)}-\langle F(y_k,z_k),z\rangle_{L^p(\Omega)}\\\ge \langle F(y_k,z_k),-z_k\rangle_{L^p(\Omega)}-\inf_{y_k\in Y_0}C_{y_k}(\lambda;\||z-z_k\||_{L^p(\Omega)}).
\end{multline}
Since $-z_k=BF(y_k,z_k)-g$, it follows from \eqref{1.21***} that
\begin{multline}
\label{1.21.0}
\langle F(y_k,z),z-z_k\rangle_{L^p(\Omega)}-\langle F(y_k,z_k),z\rangle_{L^p(\Omega)}  \\+\langle F(y_k,z_k),g\rangle_{L^p(\Omega)}\ge \langle F(y_k,z_k),B F(y_k,z_k) \rangle_{L^p(\Omega)}-\inf_{y_k\in Y_0}C_{y_k}(\lambda;\||z-z_k\||_{L^p(\Omega)}).
\end{multline}

In the meantime, due to the weak convergence $F(y_k,z_k)\to \nu_0$ in $L^q(\Omega)$ as $k\to\infty$, we arrive at the following obvious properties
\begin{gather}
\label{1.21.1}
\liminf_{k\to\infty}\langle F(y_k,z_k), B F(y_k,z_k) \rangle_{L^p(\Omega)}\ge \langle \nu_0, B\nu_0\rangle_{L^p(\Omega)},\\
\label{1.21.2a}
\lim_{k\to\infty}\langle F(y_k,z_k) ,z \rangle_{L^p(\Omega)}=
\langle \nu_0, z\rangle_{L^p(\Omega)},\\
\label{1.21.2}
\lim_{k\to\infty}\langle F(y_k,z_k),g \rangle_{L^p(\Omega)}=
\langle \nu_0, g\rangle_{L^p(\Omega)}.
\end{gather}
Moreover, the continuity of the function $C_{y_k}$ with respect to the second argument and the compactness property of operator $F$, which means that $F(y_k,z)\to F(y_0,z)$ strongly in $L^q(\Omega)$, lead to the conclusion
\begin{gather}
\label{1.21.3}
\lim_{k\to\infty} C_{y}(\lambda;\||z-z_k\||_{L^p(\Omega)})= C_{y}(\lambda;\||z-z_0\||_{L^p(\Omega)}),\quad\forall\,y\in Y_0,\\
\label{1.21.4}
\lim_{k\to\infty}\langle F(y_k,z),z-z_k\rangle_{L^p(\Omega)}= \langle F(y_0,z),z-z_0\rangle_{L^p(\Omega)}.
\end{gather}

As a result, using the properties \eqref{1.21.1}--\eqref{1.21.4}, we can pass to the limit in \eqref{1.21.0} as $k\to\infty$.  One gets
\begin{equation}
\label{1.21.5}
\langle F(y_0,z),z-z_0\rangle_{L^p(\Omega)}-\langle \nu_0,z+B\nu_0-g\rangle_{L^p(\Omega)}\ge -\inf_{y\in Y_0}C_{y}(\lambda;\||z-z_0\||_{L^p(\Omega)}).
\end{equation}
Since $B\nu_0-g=-z_0$ by \eqref{1.20***}, we can rewrite the inequality \eqref{1.21.5} as follows
\begin{align*}
\langle F(y_0,z)-\nu_0,z-z_0\rangle_{L^p(\Omega)}&\ge -\inf_{y\in Y_0}C_{y}(\lambda;\||z-z_0\||_{L^p(\Omega)})\\
&\ge - \inf_{y\in Y_0}C_{y}(\lambda;\||z-z_0\||_{L^p(\Omega)}).
\end{align*}
It remains to note that the operator $F$ is radially continuous for each $y\in Y_0$, and $F$ is the operator with u.s.b.v. (see Definitions \ref{Def.1} and \ref{Def.2}). Therefore, the last relation implies that $F(y_0,z_0)=\nu_0$ (see \cite[Theorem 1.1.2]{AMJA}) and, hence, equality \eqref{1.20***} finally takes the form
\begin{equation}\label{3*}
z_0+BF(y_0,z_0)=g.
\end{equation}
Thus, $z_0\in\mathcal{H}(y_0)$ and the triplet $(\mathcal{U}_0,y_0,z_0)$ is admissible for OCP \eqref{2.7}--\eqref{2.7c}. The proof is complete.
\end{proof}

\begin{remark}\label{Rem 1.7} In fact, as immediately follows from the proof of Theorem \ref{Th 2.8}, the set of admissible solutions $\Xi$ to the problem \eqref{2.7}--\eqref{2.7c} is sequentially $\tau$-compact.
\end{remark}

The next observation is important for our further analysis.
\begin{corollary}
\label{Rem 1.8}
Assume that all preconditions of Theorem \ref{Th 2.8} hold true. Assume also that the operator $F:W_0^{1,p}(\Omega)\times L^p(\Omega)\to L^q(\Omega)$ possesses $(\mathfrak{M})$ and $(\mathfrak{A})$ properties in the sense of Definition \ref{def_new}. Let $\left\{ y_k\right\}_{k\in \mathbb{N}}$  be a strongly convergent sequence in $W_0^{1,p}(\Omega)$. Then an arbitrary chosen sequence $\left\{ z_k\in \mathcal{H}(y_k)\right\}_{k\in \mathbb{N}}$ is relatively compact with respect to the strong topology of $L^p(\Omega)$, i.e. there exists an element $z_0\in \mathcal{H}(y_0)$ such that within a subsequence
$$
z_k\rightarrow z_0\ \text{ strongly in }\ L^p(\Omega)\ \text{ as }\ k\to\infty.
$$
\end{corollary}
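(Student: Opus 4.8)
The plan is to first extract from $\{z_k\}$ a weakly convergent subsequence, then to identify its weak limit as an element of $\mathcal{H}(y_0)$ by reusing the limit passage already performed in Theorem~\ref{Th 2.8}, and finally to promote weak convergence to strong convergence via the $(\mathfrak{M})$- and $(\mathfrak{A})$-properties of $F$.

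First I would establish boundedness of $\{z_k\}$. Since each $z_k$ solves $z_k+BF(y_k,z_k)=g$ and the strongly convergent sequence $\{y_k\}$ lies in a bounded set $Y_0\subset W_0^{1,p}(\Omega)$, the coercivity estimate of Theorem~\ref{Th 2.8} leading to \eqref{7*} yields the uniform bound $\|z_k\|_{L^p(\Omega)}\le\lambda$. By reflexivity of $L^p(\Omega)$ one may pass to a subsequence (not relabeled) with $z_k\rightharpoonup z_0$ weakly in $L^p(\Omega)$, and, as in Theorem~\ref{Th 2.8}, $\{F(y_k,z_k)\}$ is bounded in $L^q(\Omega)$, so up to a further subsequence $F(y_k,z_k)\rightharpoonup\nu_0$ weakly in $L^q(\Omega)$. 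The limit passage of Theorem~\ref{Th 2.8}, which uses only $y_k\to y_0$ strongly and $z_k\rightharpoonup z_0$ weakly, then gives $\nu_0=F(y_0,z_0)$ and $z_0+BF(y_0,z_0)=g$, i.e.\ $z_0\in\mathcal{H}(y_0)$.

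The crux, and the step I expect to be the main obstacle, is the energy-type convergence
\[
\lim_{k\to\infty}\langle F(y_k,z_k),z_k\rangle_{L^p(\Omega)}=\langle F(y_0,z_0),z_0\rangle_{L^p(\Omega)},
\]
which is exactly the hypothesis that activates the $(\mathfrak{M})$-property. The $(\mathfrak{A})$-property supplies at once the lower bound $\liminf_k\langle F(y_k,z_k),z_k\rangle_{L^p(\Omega)}\ge\langle F(y_0,z_0),z_0\rangle_{L^p(\Omega)}$, so it remains to produce the matching upper bound for the $\limsup$. To this end I would test the state equation $z_k+BF(y_k,z_k)=g$ against $F(y_k,z_k)\in L^q(\Omega)$, which gives $\langle F(y_k,z_k),z_k\rangle_{L^p(\Omega)}=\langle F(y_k,z_k),g\rangle_{L^p(\Omega)}-\langle F(y_k,z_k),BF(y_k,z_k)\rangle_{L^p(\Omega)}$. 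The first term converges to $\langle\nu_0,g\rangle_{L^p(\Omega)}$ by weak convergence of $F(y_k,z_k)$ tested against the fixed $g$, while the quadratic term is controlled from below through the weak lower semicontinuity of the positive form $w\mapsto\langle w,Bw\rangle_{L^p(\Omega)}$, precisely the inequality \eqref{1.21.1}. Combining these with $g-B\nu_0=z_0$ yields $\limsup_k\langle F(y_k,z_k),z_k\rangle_{L^p(\Omega)}\le\langle\nu_0,g-B\nu_0\rangle_{L^p(\Omega)}=\langle F(y_0,z_0),z_0\rangle_{L^p(\Omega)}$, closing the gap.

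Finally, with this convergence established, the $(\mathfrak{M})$-property applied to $y_k\to y_0$ strongly and $z_k\rightharpoonup z_0$ weakly forces $z_k\to z_0$ strongly in $L^p(\Omega)$; together with $z_0\in\mathcal{H}(y_0)$ obtained above, this is the asserted relative compactness. The subtle point throughout is the interplay of the two one-sided inequalities: the positivity of $B$ furnishes the ``wrong-way'' lower semicontinuity of the quadratic term, and only because this can be combined with the state equation and with $(\mathfrak{A})$ is the full limit of the pairing recovered.
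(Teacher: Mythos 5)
Your proposal is correct and follows essentially the same route as the paper's own proof in the Appendix: both rest on the identity obtained by testing $z_k+BF(y_k,z_k)=g$ against $F(y_k,z_k)$, the weak convergence $F(y_k,z_k)\rightharpoonup F(y_0,z_0)$ inherited from Theorem \ref{Th 2.8}, the weak lower semicontinuity of the positive form $\langle w,Bw\rangle_{L^p(\Omega)}$, and the $(\mathfrak{A})$-property, culminating in the $(\mathfrak{M})$-property to upgrade to strong convergence. The only difference is bookkeeping — you split the energy convergence into matching $\limsup$/$\liminf$ bounds, while the paper deduces convergence of each term of the sum from convergence of the sum plus the two one-sided inequalities — which is the same argument.
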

\begin{proof}
The proof is given in Appendix. See also Remarks \ref{Rem 1.9} and \ref{Rem 1.10}.
\end{proof}

Now we are in a position to prove the existence result for the original optimal control problem \eqref{2.7}--\eqref{2.7c}.
\begin{theorem}
\label{Th 2.9} Assume that
$U_{ad}=U_{b}\cap U_{sol}\ne\emptyset$ and operators $B:L^q(\Omega)\to L^p(\Omega)$ and $F:W_0^{1,p}(\Omega)\times L^p(\Omega)\to L^q(\Omega)$ are as in Theorem~\ref{Th 2.8}. Then the optimal
control problem \eqref{2.7}--\eqref{2.7c} admits at least one
solution
\begin{gather*}
(\mathcal{U}^{opt}, y^{opt},z^{opt})\in \Xi_{sol}\subset L^\infty(\Omega;\mathbb{R}^{N\times
N})\times W^{1,p}_0(\Omega)\times L^p(\Omega),\\
I_\Omega(\mathcal{U}^{opt}, y^{opt},z^{opt})=\inf_{(\mathcal{U}, y,z)\in \Xi_{sol}}I_\Omega(\mathcal{U},y,z)
\end{gather*}
for each $f\in W^{-1,q}(D)$, $g\in L^p(D)$, and $z_d\in L^p(D)$.
\end{theorem}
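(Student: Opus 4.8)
The plan is to apply the direct method of the calculus of variations, which reduces to three ingredients: non-emptiness of the feasible set, lower bound (coercivity/boundedness of the objective), and sequential compactness-plus-closedness of $\Xi_{sol}$ in the topology $\tau$ together with lower semicontinuity of $I_\Omega$. Most of the heavy lifting has already been done in the preceding results, so the argument is mainly a matter of assembling them correctly.

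First I would observe that the feasible set is non-empty: by Proposition~\ref{prop 1.15}, for the given $f$ and $g$ the set $\Xi_{sol}$ is non-empty, so the infimum $\mu:=\inf_{(\mathcal{U},y,z)\in\Xi_{sol}} I_\Omega(\mathcal{U},y,z)$ is taken over a non-empty set. Since $I_\Omega\ge 0$, we have $0\le\mu<+\infty$, and I can choose a minimizing sequence $\{(\mathcal{U}_k,y_k,z_k)\}_{k\in\mathbb{N}}\subset\Xi_{sol}$ with $I_\Omega(\mathcal{U}_k,y_k,z_k)\to\mu$.

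Next I would extract a $\tau$-convergent subsequence. By Remark~\ref{rem 1.8} the control set $U_{ad}$ is weak-$\ast$ compact in $L^\infty(D;\mathbb{R}^{N\times N})$, so $\{\mathcal{U}_k\}$ has a weak-$\ast$ convergent subsequence. The states $\{y_k\}$ are bounded in $W^{1,p}_0(\Omega)$ by the a~priori estimate \eqref{1.17} of Proposition~\ref{Prop 1.16}, hence (as $W^{1,p}_0(\Omega)$ is reflexive for $2\le p<\infty$) admit a weakly convergent subsequence. For the $\{z_k\}$, the bound $\|z_k\|_{L^p(\Omega)}\le\lambda$ established inside the proof of Theorem~\ref{Th 2.8} (see \eqref{7*}) gives weak compactness in $L^p(\Omega)$. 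Passing to a common subsequence, I obtain $(\mathcal{U}_k,y_k,z_k)\xrightarrow{\tau}(\mathcal{U}^{opt},y^{opt},z^{opt})$ for some limit triplet; this is exactly the sequential $\tau$-compactness recorded in Remark~\ref{Rem 1.7}.

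Then I would invoke Theorem~\ref{Th 2.8}: since $\Xi_{sol}$ is sequentially $\tau$-closed under the stated hypotheses on $B$ and $F$, the limit triplet satisfies $\mathcal{U}^{opt}\in U_{ad}$, $y^{opt}=y(\mathcal{U}^{opt})$ and $z^{opt}\in\mathcal{H}(y^{opt})$, so $(\mathcal{U}^{opt},y^{opt},z^{opt})\in\Xi_{sol}$. Finally, the objective is weakly lower semicontinuous: because $z_k\rightharpoonup z^{opt}$ weakly in $L^p(\Omega)$ and $u\mapsto\int_\Omega|u-z_d|^p\,dx$ is convex and strongly continuous, hence weakly lower semicontinuous, I conclude
\[
I_\Omega(\mathcal{U}^{opt},y^{opt},z^{opt})\le\liminf_{k\to\infty} I_\Omega(\mathcal{U}_k,y_k,z_k)=\mu.
\]
Combined with $(\mathcal{U}^{opt},y^{opt},z^{opt})\in\Xi_{sol}$, which forces $I_\Omega(\mathcal{U}^{opt},y^{opt},z^{opt})\ge\mu$, this yields equality and identifies the limit as a minimizer. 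The only genuinely delicate point is the lower semicontinuity step: the objective depends only on $z$, which converges merely weakly, so I must lean on the convexity of the $L^p$-norm rather than on any strong convergence of $z_k$; alternatively, Corollary~\ref{Rem 1.8} would upgrade this to strong convergence of $z_k$ under the extra $(\mathfrak{M})$ and $(\mathfrak{A})$ properties, but that is not needed here since weak lower semicontinuity already suffices.
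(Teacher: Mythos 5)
Your proposal is correct and follows essentially the same route as the paper: a minimizing sequence, the sequential $\tau$-compactness of $\Xi_{sol}$ (Remark \ref{Rem 1.7}, which you re-derive from Remark \ref{rem 1.8}, estimate \eqref{1.17} and the bound \eqref{7*}), the sequential $\tau$-closedness from Theorem \ref{Th 2.8}, and lower semicontinuity of $I_\Omega$ under weak $L^p$-convergence. The only difference is that you make explicit what the paper leaves implicit, namely that the semicontinuity follows from convexity and strong continuity of $z\mapsto\int_\Omega|z-z_d|^p\,dx$, and you correctly note that the strong convergence of $z_k$ via Corollary \ref{Rem 1.8} is not needed at this point.
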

\begin{proof}
Since the cost functional in \eqref{2.7} is bounded from below and, by Theorem~\ref{Th 1.1*}, the set of admissible solutions $\Xi_{sol}$ is nonempty, it follows that there exists a sequence $\{(\mathcal{U}_k,y_k,z_k)\}_{k\in \mathbb{N}}\subset \Xi_{sol}$ such that
$$
\lim_{k\to\infty}I_\Omega(\mathcal{U}_k,y_k,z_k)=\inf_{(\mathcal{U},y,z)\in \Xi_{sol}}I_\Omega(\mathcal{U},y,z).
$$
As it was mentioned in Remark \ref{Rem 1.7} the set of admissible solutions $\Xi$ to the problem \eqref{2.7}--\eqref{2.7c} is sequentially $\tau$-compact. Hence, there exists an admissible solution $(\mathcal{U}_0,y_0,z_0)$ such that, up to a subsequence, $(\mathcal{U}_k,y_k,z_k)\,\stackrel{\tau}{\rightarrow}\, (\mathcal{U}_0,y_0,z_0)$ as $k\to\infty$.
In order to show that $(\mathcal{U}_0,y_0,z_0)$  is an optimal solution of problem \eqref{2.7}--\eqref{2.7c}, it remains to make  use of the lower semicontinuity of the cost functional with respect to the $\tau$-convergence
\begin{align*}
I_\Omega(\mathcal{U}_0,y_0,z_0)&\le\liminf_{m\to\infty}I_\Omega(\mathcal{U}_{k_m},y_{k_m},z_{k_m})\\
&=\lim_{k\to\infty}I_\Omega(\mathcal{U}_k,y_k,z_k)=\inf_{(\mathcal{U}, y,z)\in \Xi_{sol}}I_\Omega(\mathcal{U},y,z).
\end{align*}
The proof is complete.
\end{proof}

\section{Domain perturbations for optimal control problem}
\label{Sec 3}
The aim of this section is to study the asymptotic behavior of solutions
$(\mathcal{U}_\e^{opt},y_\e^{opt},z_\e^{opt})$ to the optimal control problems
\begin{gather}
\label{3.1} I_{\,\Omega_\e}(\mathcal{U}_\e,y_\e,z_\e)=\int_{\Omega_\e}
|z_\e(x)-z_d(x)|^p\,dx
\longrightarrow\inf,\\
\label{3.2} -\mathrm{div}\,\left(\mathcal{U}_\e(x)[(\nabla y_\e)^{p-2}]
\nabla y_\e \right)+
|y_\e|^{p-2}y_\e=f \text{ in }
\Omega_\e,\\
\label{3.3} y_\e\in  W^{1,\,p}_0(\Omega_\e),\quad \mathcal{U}_\e\in U_{ad},\\
\label{3.4} z_\e + B F(y_\e,z_\e)=g\text{ in }
\Omega_\e,\; z_\e\in L^p(\Omega_\e)
\end{gather}
as $\e\to 0$ under some appropriate perturbations $\left\{\Omega_\e\right\}_{\e>0}$ of a fixed domain
$\Omega\subseteq D$. As before, we suppose that $f\in W^{-1,q}(D)$, $g\in L^p(D)$, and
$z_d\in L^p(D)$ are given functions. We assume that the set of admissible
controls $U_{ad}$ and, hence, the corresponding sets of admissible
solutions $\Xi_\e\subset L^\infty(D;\mathbb{R}^{N\times N})\times
W^{1,\,p}_0(\Omega_\e)\times L^p(\Omega_\e)$ are nonempty for every $\e>0$. We also assume that all conditions of Theorem \ref{Th 2.8} and Corollary \ref{Rem 1.8} hold true for every open subset $\Omega$ of $D$.

The following assumption is crucial for our further analysis.
\begin{enumerate}
\item [($\mathfrak{B}$)] The Hammerstein equation
  \begin{equation}
  \label{3.4.1}
  \int_D z \,\phi \, dx + \int_D B F(y,z)\,\phi\,dx=\int_D g\,\phi \,dx,
  \end{equation}
  possesses property $(\mathfrak{B})$, i.e. for any pair $(y,z)\in W_0^{1,p}(D)\times L^p(D)$ such that $z\in\mathcal{H}(y)$ and any sequence $\{y_k\}_{k\in\mathbb{N}}\subset W_0^{1,p}(D)$, strongly convergent in $W_0^{1,p}(D)$ to the element $y$, there exists a sequence $\{z_k\}_{k\in \mathbb{N}}\subset L^p(D)$ such that
\begin{gather*}
z_k\in \mathcal{H}(y_k),\quad\forall\, k\in \mathbb{N}\quad\mbox{ and }\quad
z_k\to z \mbox{ strongly in } L^p(D).
\end{gather*}
\end{enumerate}

\begin{remark}
As we have mentioned in Remark \ref{Rem 1.9}, under assumptions of Corollary \ref{Rem 1.8}, the set $\mathcal{H}(y)$ is non-empty and compact with respect to strong topology of $L^p(D)$ for every $y\in W_0^{1,p}(D)$. Hence, the $(\mathfrak{B})$-property obviously holds  true provided $\mathcal{H}(y)$ is a singleton (even if each of the sets $\mathcal{H}(y_k)$ contains more than one element). On the other hand, since we consider Hammerstein equation in rather general framework, it follows that without $(\mathfrak{B})$-property we cannot guarantee that every element of $\mathcal{H}(y)$ can be attained in strong topology by elements from $\mathcal{H}(y_k)$.
\end{remark}

Before we give the precise definition of the shape stability for
the above problem and admissible perturbations for open set
$\Omega$, we remark that neither the set convergence
$\Omega_\e\,\stackrel{H^c}{\longrightarrow}\,\Omega$ in the
Hausdorff complementary topology nor the topological set
convergence $\Omega_\e\,\stackrel{\mathrm{top}}{\longrightarrow}\,
\Omega$ is a sufficient condition to prove the shape stability of
the control problem \eqref{2.7}--\eqref{2.7b}. In general,  a
limit triplet for the sequence
$\left\{(\mathcal{U}_\e^{opt},y_\e^{opt},z_\e^{opt})\right\}_{\e>0}$, under
$H^c$-perturbations of $\Omega$, can be non-admissible to the
original problem \eqref{2.7}--\eqref{2.7c}. We refer to
\cite{DMaso_Mur} for simple counterexamples. So, we have to impose
some additional constraints on the moving domain. In view of this, we begin
with the following concepts:
\begin{definition}
\label{Def 3.1} Let $\Omega$ and $\left\{\Omega_\e\right\}_{\e>0}$
be open subsets of $D$. We say that the sets
$\left\{\Omega_\e\right\}_{\e>0}$ form  an $H^c$-admissible
perturbation of $\Omega$, if:
\begin{enumerate}
\item[(i)] $\Omega_\e\,\stackrel{H^c}{\longrightarrow}\,\Omega$ as
$\e\to 0$;
\item[(ii)] $\Omega_\e\in  \mathcal{W}_w(D)$ for every $\e>0$, where the class
$\mathcal{W}_w(D)$ is defined in \eqref{1.4*}.
\end{enumerate}
\end{definition}

\begin{definition}
\label{Def 3.1a} Let $\Omega$ and
$\left\{\Omega_\e\right\}_{\e>0}$ be open subsets of $D$. We say
that the sets $\left\{\Omega_\e\right\}_{\e>0}$ form  a
topologically admissible perturbation of $\Omega$ (shortly,
$t$-admissible), if
$\Omega_\e\,\stackrel{\mathrm{top}}{\longrightarrow}\, \Omega$ in
the sense of Definition \ref{Def 1.2}.
\end{definition}

\begin{remark}
\label{Rem 3.2} As Theorem \ref{Th 1.3} indicates, a subset
$\Omega\subset D$ admits the existence of $H^c$-admissible
perturbations if and only if $\Omega$ belongs to the family
$\mathcal{W}_w(D)$. It turns out that the assertion:
\[
\text{``}y\in W^{1,\,p}(\mathbb{R}^N),\ \Omega\in \mathcal{W}_w(D),\
\text{ and }\ \mathrm{supp}\,y\subset \overline{\Omega},\  \text{
imply }\ y\in W^{1,\,p}_0(\Omega)\text{"}
\]
is not true, in general. In particular, the above statement does
not take place in the case when an open domain $\Omega$ has a
crack. So, $\mathcal{W}_w(D)$ is a rather general class of open
subsets of $D$.
\end{remark}
\begin{remark}
\label{Rem 3.2a} The remark above motivates us to say that we call
$\Omega\subset D$ a $p$-stable domain if for any $y\in
W^{1,\,p}(\mathbb{R}^N)$ such that $y=0$ almost everywhere on
$\text{int}\,\Omega^c$, we get $\left.y\right|_{\,\Omega}\in
W^{1,\,p}_0(\Omega)$. Note that this property holds for all
reasonably regular domains such as Lipschitz domains for instance.
A more precise discussion of this property may be found in
\cite{Dancer}.
\end{remark}

We begin with the following result.
\begin{proposition}
\label{Prop 3.3} Let $\Omega\in \mathcal{W}_w(D)$ be a fixed
subdomain of $D$, and let $\left\{\Omega_\e\right\}_{\e>0}$ be an
$H^c$-admissible perturbation of $\Omega$. Let
$\left\{(\mathcal{U}_{\e},y_\e,z_\e)\in
\Xi_{\Omega_\e}\right\}_{\e>0}$ be a sequence of admissible triplets
for the problems \eqref{3.1}--\eqref{3.4}. Then the sequence
$\left\{(\mathcal{U}_{\e},\widetilde{y}_{\e},\widetilde{z}_\e)\right\}_{\e>0}$
is uniformly bounded in $L^\infty(D;\mathbb{R}^{N\times N})\times
W^{1,\,p}_0(D)\times L^p(D)$ and for each its $\tau$-cluster triplet
$
(\mathcal{U}^\ast,y^\ast,z^\ast)\in L^\infty(D;\mathbb{R}^{N\times N})\times
W^{1,\,p}_0(D)\times L^p(D)
$ (e.g. a closure point for  $\tau$-topology),
we have
\begin{gather}
\label{3.4*} \mathcal{U}^\ast\in U_{ad},\\[1ex]
\label{3.5}
\begin{split}
\int_D \left(\mathcal{U}^\ast[(\nabla y^\ast)^{p-2}]\nabla y^\ast, \nabla\widetilde{\varphi}\right)_{\mathbb{R}^N}\,dx& + \int_D
|y^\ast|^{p-2}y^\ast \widetilde{\varphi}\,dx\\
&=\langle f,
\widetilde{\varphi}\rangle_{W_0^{1,p}(D)},\,\forall\,\varphi\in
C^\infty_0(\Omega),
\end{split}\\[1ex]
\label{3.5*} \int_D z^\ast \widetilde{\psi}\,dx +\langle BF(y^\ast,z^\ast),\widetilde{\psi}\rangle_{L^q(D)}=\int_D g\,
\widetilde{\psi}\,dx,\quad \forall\,\psi\in
C^\infty_0(\Omega).
\end{gather}
\end{proposition}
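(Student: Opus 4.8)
The plan is to run the closedness argument of Theorem~\ref{Th 2.8} on the fixed ambient domain $D$, working throughout with the zero-extensions $\widetilde y_\e,\widetilde z_\e$ and replacing the fixed-domain passage to the limit by the Mosco convergence $W^{1,p}_0(\Omega_\e)\to W^{1,p}_0(\Omega)$, which Theorem~\ref{Th 1.3} guarantees since $\Omega_\e\in\mathcal{W}_w(D)$ and $\Omega_\e\stackrel{H^c}{\to}\Omega$. First I would obtain the uniform bounds: each $\mathcal{U}_\e\in U_{ad}$ is bounded in $L^\infty(D;\mathbb{R}^{N\times N})$ by $\beta$; Proposition~\ref{Prop 1.16} on $\Omega_\e$ gives $\|y_\e\|^p_{W^{1,p}_0(\Omega_\e)}\le C\|f\|^q_{W^{-1,q}(D)}$ with $C$ independent of $\e$, and since the zero-extension is isometric, $\{\widetilde y_\e\}$ is bounded in $W^{1,p}_0(D)$; repeating the coercivity computation \eqref{1.18.1}--\eqref{7*} on $\Omega_\e$ produces the domain-independent bound $\|z_\e\|_{L^p(\Omega_\e)}\le\lambda$, so $\{\widetilde z_\e\}$ is bounded in $L^p(D)$. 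By reflexivity and the weak-$\ast$ compactness of $U_{ad}$ (Remark~\ref{rem 1.8}) the family is $\tau$-relatively compact; I fix a cluster triplet $(\mathcal{U}^\ast,y^\ast,z^\ast)$ and a subsequence with $\mathcal{U}_\e\stackrel{\ast}{\rightharpoonup}\mathcal{U}^\ast$, $\widetilde y_\e\rightharpoonup y^\ast$ in $W^{1,p}_0(D)$, $\widetilde z_\e\rightharpoonup z^\ast$ in $L^p(D)$. Then \eqref{3.4*} is immediate from the weak-$\ast$ closedness of $U_{ad}$, while condition $(M_2)$ of Mosco convergence forces $y^\ast\in W^{1,p}_0(\Omega)$.

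For the state equation I fix $\varphi\in C^\infty_0(\Omega)$. Since $\Omega_\e\stackrel{H^c}{\to}\Omega$, one has $\mathrm{supp}\,\varphi\subset\subset\Omega_\e$ for $\e$ small, so $\varphi$ is admissible in \eqref{2.7a} on $\Omega_\e$; rewriting the identity on $D$ via zero-extensions and setting $\xi_\e:=\mathcal{U}_\e[(\nabla\widetilde y_\e)^{p-2}]\nabla\widetilde y_\e$, the flux is bounded in $L^q(D;\mathbb{R}^N)$, so $\xi_\e\rightharpoonup\xi^\ast$ weakly along a subsequence. The compact embedding $W^{1,p}_0(D)\hookrightarrow\hookrightarrow L^p(D)$ gives $\widetilde y_\e\to y^\ast$ strongly in $L^p(D)$ and hence $|\widetilde y_\e|^{p-2}\widetilde y_\e\to|y^\ast|^{p-2}y^\ast$ in $L^q(D)$. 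Passing to the limit yields
\begin{equation}
\label{eq:planstate}
\int_D(\xi^\ast,\nabla\widetilde\varphi)_{\mathbb{R}^N}\,dx+\int_D|y^\ast|^{p-2}y^\ast\widetilde\varphi\,dx=\langle f,\widetilde\varphi\rangle_{W^{1,p}_0(D)},\quad\forall\,\varphi\in C^\infty_0(\Omega).
\end{equation}
What remains is to identify $\xi^\ast=\mathcal{U}^\ast[(\nabla y^\ast)^{p-2}]\nabla y^\ast$, which I expect to be the core difficulty.

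I would carry out this identification by the Minty--Browder device. Since $y^\ast\in W^{1,p}_0(\Omega)$, \eqref{eq:planstate} extends by density to $v=y^\ast$, so $\int_D(\xi^\ast,\nabla y^\ast)\,dx=\langle f,y^\ast\rangle-\int_D|y^\ast|^p\,dx$; testing the $\e$-equation with $\widetilde y_\e$ and using the strong $L^p$-convergence of $\widetilde y_\e$ then shows the energy converges, $\int_D(\xi_\e,\nabla\widetilde y_\e)\,dx\to\int_D(\xi^\ast,\nabla y^\ast)\,dx$. From the monotonicity \eqref{1.4}, for every $\Phi\in L^p(D;\mathbb{R}^N)$,
\begin{equation*}
0\le\int_D\big(\mathcal{U}_\e([(\nabla\widetilde y_\e)^{p-2}]\nabla\widetilde y_\e-[\Phi^{p-2}]\Phi),\nabla\widetilde y_\e-\Phi\big)_{\mathbb{R}^N}\,dx.
\end{equation*}
The genuinely delicate term is $\int_D(\mathcal{U}_\e[\Phi^{p-2}]\Phi,\nabla\widetilde y_\e)\,dx$, a product of the weakly-$\ast$ oscillating coefficients with the merely weakly convergent gradients, for which no naive weak-weak passage is available. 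Here the solenoidal structure is decisive: because $\mathcal{U}_\e$ is symmetric, this term equals $\sum_j\int_D([\Phi^{p-2}]\Phi)_j(u_{j,\e},\nabla\widetilde y_\e)_{\mathbb{R}^N}\,dx$ with $u_{j,\e}$ the rows of $\mathcal{U}_\e$; since $\mathrm{div}\,u_{j,\e}\in Q_j$ is precompact in $W^{-1,q}(D)$ and $\nabla\widetilde y_\e$ is curl-free, the div--curl lemma (compensated compactness, cf. \cite{MuratTartar1997}) yields $(u_{j,\e},\nabla\widetilde y_\e)\rightharpoonup(u_j^\ast,\nabla y^\ast)$ weakly in $L^p(D)$, and the term converges to $\int_D(\mathcal{U}^\ast[\Phi^{p-2}]\Phi,\nabla y^\ast)\,dx$. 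Combining the three limits leaves $\int_D(\xi^\ast-\mathcal{U}^\ast[\Phi^{p-2}]\Phi,\nabla y^\ast-\Phi)_{\mathbb{R}^N}\,dx\ge0$ for all $\Phi$, and the standard choice $\Phi=\nabla y^\ast-t\Theta$, $t\downarrow0$, together with the continuity of $\zeta\mapsto\mathcal{U}^\ast(x)[\zeta^{p-2}]\zeta$, forces $\xi^\ast=\mathcal{U}^\ast[(\nabla y^\ast)^{p-2}]\nabla y^\ast$; inserting this into \eqref{eq:planstate} gives \eqref{3.5}. This is exactly the passage effected in \cite{OKogut2010,Kupenko2011} and invoked in Theorem~\ref{Th 2.8}.

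Finally, for the Hammerstein equation I would transcribe the second half of the proof of Theorem~\ref{Th 2.8} to the extended setting. The uniform bound $\|z_\e\|_{L^p(\Omega_\e)}\le\lambda$ together with Remark~\ref{Rem 1.5} gives $\|F(y_\e,z_\e)\|_{L^q}\le c$ uniformly, so $F(y_\e,z_\e)\rightharpoonup\nu^\ast$ weakly in $L^q(D)$ along a subsequence; testing \eqref{3.4} with $\widetilde\psi$, $\psi\in C^\infty_0(\Omega)$ (again admissible on $\Omega_\e$ for small $\e$) and using the continuity of $B$, I pass to the limit to obtain $z^\ast+B\nu^\ast=g$ in the sense of \eqref{3.5*}. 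The identity $\nu^\ast=F(y^\ast,z^\ast)$ then follows exactly as in \eqref{1.21***}--\eqref{3*}: the u.s.b.v. inequality for $F$, the compactness property $F(y_\e,z)\to F(y^\ast,z)$ strongly (which needs only the weak convergence $\widetilde y_\e\rightharpoonup y^\ast$), the weak convergence $F(y_\e,z_\e)\rightharpoonup\nu^\ast$, and the radial continuity of $F(y^\ast,\cdot)$ combine to give $z^\ast\in\mathcal{H}(y^\ast)$, hence \eqref{3.5*}. The only real obstacle is the single oscillating-product term treated in the preceding paragraph; once the symmetry plus solenoidal div--curl argument disposes of it, everything else is a routine combination of weak continuity, the compact embedding, and the monotonicity machinery already assembled for Theorem~\ref{Th 2.8}.
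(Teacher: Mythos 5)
Your proposal is correct in substance and structurally parallels the paper's proof: uniform bounds via Proposition \ref{Prop 1.16} and the coercivity estimate \eqref{7*}, $\tau$-relative compactness, $\mathcal{U}^\ast\in U_{ad}$ from Remark \ref{rem 1.8}, limit passage in the extended integral identities with a weak $L^q$-limit $\xi^\ast$ of the fluxes and $\nu^\ast$ of $F(y_\e,z_\e)$, and then identification of these two limits. Two things you do differently. For transferring test functions you use the $H^c$-property that $\mathrm{supp}\,\varphi\subset\subset\Omega$ implies $\mathrm{supp}\,\varphi\subset\subset\Omega_\e$ for small $\e$, so the fixed $\varphi,\psi$ are themselves admissible on $\Omega_\e$; the paper instead invokes Mosco condition $(M_1)$ (via Theorem \ref{Th 1.3}) to build recovery sequences $\varphi_\e,\psi_\e$ converging strongly. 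Both mechanisms work; your device is exactly the one the paper uses later for $t$-admissible perturbations in Proposition \ref{Prop 3.20.1}. Secondly, where the paper defers the identifications \eqref{3.6a}--\eqref{3.6b} to \cite{OKogut2010,Kupenko2011} and Theorem \ref{Th 2.8}, you write them out: Minty's trick plus the div--curl lemma (exploiting symmetry and the solenoidal constraint $\mathrm{div}\,u_{j,\e}\in Q_j$) for the flux, and the u.s.b.v./radial-continuity argument for the Hammerstein part. That is genuine added content and is indeed the mechanism the cited references rely on.

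One step, however, is not justified as written. Your energy-convergence step tests the limit identity with $v=y^\ast$, and for the resulting equality $\int_D(\xi^\ast,\nabla y^\ast)_{\mathbb{R}^N}\,dx=\langle f,y^\ast\rangle_{W_0^{1,p}(D)}-\int_D|y^\ast|^p\,dx$ you need $y^\ast$ to \emph{be} the zero extension of an element of $W^{1,p}_0(\Omega)$, i.e. $y^\ast=0$ a.e. on $D\setminus\Omega$. You extract this from ``$(M_2)$'', but the paper's formulation of $(M_2)$ only asserts that the restriction $\left.y^\ast\right|_{\Omega}$ lies in $W^{1,p}_0(\Omega)$; it says nothing about $y^\ast$ outside $\Omega$. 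The paper itself proves the vanishing of $y^\ast$ on $D\setminus\Omega$ only later (Step 1 of Proposition \ref{Prop 3.13}) and needs the Bucur--Trebeschi auxiliary-problem Lemma \ref{Lemma 3.9} to do it, not $(M_2)$ alone. Two repairs are available: (i) invoke the stronger form of Mosco convergence actually established in \cite{Bucur_Treb} for the class $\mathcal{W}_w(D)$, in which the spaces $W^{1,p}_0(\Omega_\e)$, $W^{1,p}_0(\Omega)$ are viewed as closed subspaces of $W^{1,p}(\mathbb{R}^N)$ and weak limit points are asserted to belong to the subspace $W^{1,p}_0(\Omega)$ itself; or (ii) localize your Minty argument: for $\theta\in C^\infty_0(\Omega)$, $\theta\ge 0$, test the $\e$-equation with $\theta\widetilde y_\e\in W^{1,p}_0(\Omega_\e)$ and the limit identity with $\theta y^\ast$ (legitimate, because a $W^{1,p}(D)$-function with compact support in $\Omega$ is the zero extension of an element of $W^{1,p}_0(\Omega)$), which yields
\[
\lim_{\e\to 0}\int_D\theta\,\bigl(\xi_\e,\nabla\widetilde y_\e\bigr)_{\mathbb{R}^N}\,dx=\int_D\theta\,\bigl(\xi^\ast,\nabla y^\ast\bigr)_{\mathbb{R}^N}\,dx,
\]
and hence, after your div--curl step, $\int_D\theta\,(\xi^\ast-\mathcal{U}^\ast[\Phi^{p-2}]\Phi,\nabla y^\ast-\Phi)_{\mathbb{R}^N}\,dx\ge 0$ for all $\Phi\in L^p(D;\mathbb{R}^N)$; Minty's argument then gives $\xi^\ast=\mathcal{U}^\ast[(\nabla y^\ast)^{p-2}]\nabla y^\ast$ a.e. in $\Omega$, which is all that \eqref{3.5} requires, since the test functions there are supported in $\Omega$. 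With either repair your proof is complete.
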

\begin{proof}  Since each of the triplets $(\mathcal{U}_{\e},y_{\e},z_\e)$ is
admissible to the corresponding problem \eqref{3.1}--\eqref{3.4},
the uniform boundedness of the sequence
$\left\{(\mathcal{U}_{\e},\widetilde{y}_{\e},\widetilde{z}_\e)\right\}_{\e>0}$ with respect
to the norm of $L^\infty(D;\mathbb{R}^{N\times N})\times
W^{1,\,p}_0(D)\times L^p(D)$ is a direct consequence of \eqref{2.5},  Proposition \ref{Prop
1.16},  and Theorem \ref{Th 2.8}. So, we may assume that there
exists a triplet
$(\mathcal{U}^\ast,y^\ast,z^\ast)$ such that (within a
subsequence still denoted by suffix $\e$)
$$
(\mathcal{U}_{\e},\widetilde{y}_{\e},\widetilde{z_\e})\,\stackrel{\tau}{\longrightarrow}\,(\mathcal{U}^\ast,y^\ast,z^\ast)\ \text{ in }\ L^\infty(D;\mathbb{R}^{N\times N})\times W^{1,\,p}_0(D)\times L^p(D).
$$
Then, in view of Remark \ref{rem 1.8}, we have
$\mathcal{U}^\ast\in U_{ad}$.

Let us take as test functions $\varphi\in C^\infty_0(\Omega)$ and $\psi\in C^\infty_0(\Omega)$.
Since $\Omega_\e\,\stackrel{H^c}{\longrightarrow}\,\Omega$, then
by Theorem \ref{Th 1.3}, the Sobolev spaces
$\left\{W^{1,\,p}_0(\Omega_\e)\right\}_{\e>0}$ converge in the
sense of Mosco to $W^{1,\,p}_0(\Omega)$. Hence, for the functions
$\varphi,\psi\in W^{1,\,p}_0(\Omega)$ fixed before, there exist
sequences $\left\{\varphi_\e\in
W^{1,\,p}_0(\Omega_\e)\right\}_{\e>0}$ and $\left\{\psi_\e\in
W^{1,\,p}_0(\Omega_\e)\right\}_{\e>0}$ such that
$\widetilde{\varphi}_\e\rightarrow \widetilde{\varphi}$ and $\widetilde{\psi}_\e\rightarrow \widetilde{\psi}$ strongly
in $W^{1,\,p}(D)$    (see property ($M_1$)). Since $(\mathcal{U}_{\e},y_\e,z_\e)$
is an admissible triplet for the corresponding problem in
$\Omega_\e$, we can write for every $\e>0$
\begin{gather*}
\int_{\Omega_\e}{\left(\mathcal{U}_\e[(\nabla
{y_\e})^{p-2}]\nabla y_\e,\nabla
\varphi_\e\right)_{\mathbb{R}^N}\,dx} +\int_{\Omega_\e}
|y_\e|^{p-2}y_\e\,\varphi_\e\,dx=
\langle f,\varphi_\e\rangle_{W_0^{1,p}(\Omega_\e)},\\
\int_{\Omega_\e} z_\e \psi_\e\,dx +\langle BF(y_\e,z_\e),\psi_\e\rangle_{L^q(\Omega_\e)}=\int_{\Omega_\e} g\,
\psi_\e\,dx,
\end{gather*}
and, hence,
\begin{gather}
\label{3.6}
\int_{D}{\left(\mathcal{U}_\e[(\nabla
{\widetilde{y}_\e})^{p-2}]\nabla \widetilde{y}_\e,\nabla
\widetilde{\varphi}_\e\right)_{\mathbb{R}^N}\,dx} +\int_{D}
|\widetilde{y}_\e|^{p-2}\widetilde{y}_\e\,\widetilde{\varphi}_\e\,dx=
\langle f,\widetilde{\varphi}_\e\rangle_{W_0^{1,p}(D)},
\\
\label{3.6*}
\int_D \widetilde{z}_\e\widetilde{\psi}_\e\,dx +\langle BF(\widetilde{y}_\e,\widetilde{z_\e}),\widetilde{\psi}_\e\rangle_{L^q(D)}=\int_D g\,
\widetilde{\psi}_\e\,dx.
\end{gather}

To prove the equalities \eqref{3.5}--\eqref{3.5*}, we pass to the limit in the
integral identities \eqref{3.6}--\eqref{3.6*} as $\e\to 0$. Using the arguments from \cite{OKogut2010, Kupenko2011} and Theorem \ref{Th 2.8}, we have
\begin{align*}
\mathrm{div}\,{u}_{i\,\e}\rightarrow\mathrm{div}\,u^\ast_{i}
\ &\text{ strongly in }\ W^{-1,\,q}(D),\ \forall\, i=1,\dots,n,\\
\left\{[(\nabla \widetilde{y}_\e)^{p-2}]\nabla
\widetilde{y}_\e\right\}_{\e>0}
\ &\text{ is bounded in }\ {L}^q(D;\mathbb{R}^N),\\
\left\{|\widetilde{y}_\e|^{p-2} \widetilde{y}_\e\right\}_{\e>0} \
&\text{ is bounded in }\ L^q(D),\\
\left\{\widetilde{z}_\e \right\}_{\e>0} \
\text{ is
bounded in }\ L^p(D),\,&\left\{F(\widetilde{y}_\e,\widetilde{z}_\e)\right\}_{\e>0} \
\text{ is
bounded in }\ L^p(D),\\
\widetilde{y}_\e\rightarrow y^\ast\ \text{ in }\ L^p(D),\quad
&\widetilde{y}_\e(x)\rightarrow y^\ast(x)\ \text{a.e.}\ x\in
D,\\
|\widetilde{y}_\e|^{p-2} \widetilde{y}_\e\rightarrow
|y^\ast|^{p-2} y^\ast \text{ weakly}& \text{  in } L^q(D),\,\widetilde{z}_\e\rightarrow z^\ast \text{ weakly in } L^p(D), \\
\exists \,\nu\in L^q(D)\ \text{ such that }\  &F(\widetilde{y}_\e,\widetilde{z}_\e)\rightarrow \nu\text{ weakly in } L^p(D),
\end{align*}
where $\mathcal{U}_\e=\left[u_{1\,\e},\dots,u_{N\,\e}\right]$ and
$\mathcal{U}^\ast=\left[u^\ast_{1},\dots,u^\ast_{N}\right]$.

As for the sequence $
\left\{f_\e:=f-|\widetilde{y}_\e|^{p-2}\widetilde{y}_\e\right\}_{\e>
0}$, it is clear that
\begin{equation*}
f_\e\rightarrow f_0=f-|y^\ast|^{p-2}y^\ast\quad\text{ strongly
in }\quad W^{-1,\,\,q}(D).
\end{equation*}
In view of these observations and a priori estimate \eqref{1.17}, it is easy to see that the sequence
$\left\{\mathcal{U}_\e[(\nabla \widetilde{y}_\e)^{p-2}]\nabla \widetilde{y}_\e \right\}_{\e>0}$ is bounded
in ${L}^q(D;\mathbb{R}^N)$. So, up to a subsequence, we may suppose that
there exists a vector-function $\xi\in {L}^q(D;\mathbb{R}^N)$ such
that
\begin{equation*}
\mathcal{U}_\e[(\nabla \widetilde{y}_\e)^{p-2}]\nabla \widetilde{y}_\e\to {\xi}\quad\text{ weakly in }\
{L}^q(D;\mathbb{R}^N).
\end{equation*}
As a result, using the strong convergence
$\widetilde{\varphi}_\e\rightarrow \widetilde{\varphi}$ in
$W^{1,\,p}(D)$ and the strong convergence $\widetilde{\psi}_\e\rightarrow \widetilde{\psi}$ in
$L^p(D)$, the limit passage in the relations
\eqref{3.6}--\eqref{3.6*} as $\e\to 0$  gives
\begin{gather}
\label{3.7} \int_{D}\left({\xi},\nabla
\widetilde{\varphi}\right)_{\mathbb{R}^N}\,dx=
\int_{D}\left(f-|y^\ast|^{p-2}y^\ast\,\right)
\widetilde{\varphi}\,dx,\\
\label{3.7*}
\int_D z^\ast\widetilde{\psi}\,dx +\langle B\nu,\widetilde{\psi}\rangle_{L^q(D)}=\int_D g\,
\widetilde{\psi}\,dx.
\end{gather}

To conclude the proof it remains to note that the validity of equalities
\begin{align}
\label{3.6a} \xi&= \mathcal{U}^\ast[(\nabla y^\ast)^{p-2}]\nabla y^\ast,\\
\label{3.6b} \nu&= F(y^\ast,z^\ast)
\end{align}
can be established in a similar manner as in \cite{OKogut2010,Kupenko2011} and Theorem \ref{Th 2.8}.
\end{proof}

Our next intention is to prove that every $\tau$-cluster triplet
$$
(\mathcal{U}^\ast,y^\ast,z^\ast)\in L^\infty(D;\mathbb{R}^{N\times N})\times
W^{1,\,p}_0(D)\times L^p(\Omega)
$$
of the sequence
$\left\{(\mathcal{U}_{\e},y_\e,z_{\e})\in
\Xi_\e\right\}_{\e>0}$
is admissible to the original optimal control problem
\eqref{2.7}--\eqref{2.7c}. With that in mind, as follows from
\eqref{3.4}--\eqref{3.5}, we have to show that $\left.
y^\ast\right|_\Omega\in W^{1,\,p}_0(\Omega)$ and $z^\ast\in\mathcal{H}(\left.
y^\ast\right|_\Omega)$, i.e.,
$$
\int_\Omega z^\ast\psi\,dx +\langle BF(y^\ast,z^\ast),\psi\rangle_{L^q(\Omega)}=\int_\Omega g\,
\psi\,dx,\;\forall\,\psi\in W_0^{1,p}(\Omega).
$$
To this end, we give the following result (we refer to
\cite{Bucur_Treb} for the details).

\begin{lemma}
\label{Lemma 3.9} Let $\Omega, \left\{\Omega_\e\right\}_{\e>0}\in
\mathcal{W}_w(D)$, and let
$\Omega_\e\,\stackrel{H^c}{\longrightarrow}\,\Omega$ as $\e\to 0$.
Let $\mathcal{U}_0\in M_p^{\alpha,\beta}(D)$ be a fixed matrix. Then
\begin{equation}
\label{3.10} \widetilde{v}_{\,\Omega_\e,\,h}\rightarrow
\widetilde{v}_{\,\Omega,\,h}\ \text{strongly in }\
W^{1,\,p}_0(D),\quad\forall\, h\in W^{1,\,p}_0(D),
\end{equation}
where $v_{\,\Omega_\e,\,h}$ and $v_{\,\Omega,\,h}$ are the unique
weak solutions to the boundary value problems
\begin{equation}
\label{3.11} \left.
\begin{array}{c}
-\mathrm{div}\,\left(\mathcal{U}_0[(\nabla v)^{p-2}] \nabla v\right)+
|v|^{p-2}v=0 \text{ in }
\Omega_\e,\\[1ex]
v-h\in  W^{1,\,p}_0(\Omega_\e)
\end{array}
\right\}
\end{equation}
and
\begin{equation}
\label{3.12} \left.
\begin{array}{c}
-\mathrm{div}\,\left(\mathcal{U}_0[(\nabla v)^{p-2}]\right)+
|v|^{p-2}v=0\text{ in }
\Omega,\\[1ex]
v-h\in  W^{1,\,p}_0(\Omega),
\end{array}
\right\}
\end{equation}
respectively. Here, $\widetilde{v}_{\,\Omega_\e,\,h}$ and $\widetilde{v}_{\,\Omega,\,h}$ are the extensions of $v_{\,\Omega_\e,\,h}$ and $v_{\,\Omega,\,h}$ such that they
coincide with $h$ out of $\Omega_\e$ and $\Omega$, respectively.
\end{lemma}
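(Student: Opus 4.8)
The plan is to combine the Mosco convergence of the spaces $W^{1,p}_0(\Omega_\e)$, furnished by Theorem \ref{Th 1.3} (since $\Omega,\Omega_\e\in\mathcal{W}_w(D)$ and $\Omega_\e\stackrel{H^c}{\to}\Omega$), with the strong monotonicity of the operator $A(\mathcal{U}_0,\cdot)$. The key idea is that strong monotonicity lets me upgrade weak convergence of the solutions to strong convergence directly, so that no Minty--Browder device is needed to identify the nonlinear flux. First I would establish uniform bounds: since $v_{\,\Omega_\e,h}-h\in W^{1,p}_0(\Omega_\e)$ is an admissible test function, I test \eqref{3.11} with it and use the coercivity \eqref{1.5}, the growth bound \eqref{1.3}, and Young's inequality to absorb all terms containing $h$. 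This yields $\|\widetilde v_{\,\Omega_\e,h}\|_{W^{1,p}_0(D)}\le C$ with $C$ independent of $\e$ (in the spirit of Proposition \ref{Prop 1.16}). Passing to a subsequence, $\widetilde v_{\,\Omega_\e,h}\rightharpoonup v^\ast$ weakly in $W^{1,p}_0(D)$, strongly in $L^p(D)$, and a.e.

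Next I would pin down the boundary data of the limit. Applying the Mosco property $(M_2)$ to $u_\e:=\widetilde v_{\,\Omega_\e,h}-h\in W^{1,p}_0(\Omega_\e)$, which converges weakly in $W^{1,p}(\mathbb{R}^N)$ to $v^\ast-h$, I obtain that $w^\ast:=v^\ast-h$ belongs to $W^{1,p}_0(\Omega)$, its zero-extension coinciding with $v^\ast-h$. This is exactly where the membership $\Omega\in\mathcal{W}_w(D)$ is indispensable: it is what forces the weak limit to vanish $p$-q.e. outside $\Omega$, and this identification of the boundary behaviour of $v^\ast$ is the most delicate conceptual point of the proof.

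The technical heart is the passage from weak to strong convergence. By the Mosco property $(M_1)$ applied to $w^\ast\in W^{1,p}_0(\Omega)$, I choose $w_\e\in W^{1,p}_0(\Omega_\e)$ with $\widetilde w_\e\to w^\ast$ strongly in $W^{1,p}(D)$, and set $\hat v_\e:=w_\e+h$, so that $\hat v_\e-h\in W^{1,p}_0(\Omega_\e)$ and $\widetilde{\hat v}_\e\to v^\ast$ strongly in $W^{1,p}_0(D)$. Since $v_{\,\Omega_\e,h}-\hat v_\e\in W^{1,p}_0(\Omega_\e)$, testing \eqref{3.11} with this difference gives $\langle A(\mathcal{U}_0,v_{\,\Omega_\e,h}),v_{\,\Omega_\e,h}-\hat v_\e\rangle=0$, so by strong monotonicity
\[
c\,\|\widetilde v_{\,\Omega_\e,h}-\widetilde{\hat v}_\e\|^p_{W^{1,p}_0(D)}\le\langle A(\mathcal{U}_0,\widetilde v_{\,\Omega_\e,h})-A(\mathcal{U}_0,\widetilde{\hat v}_\e),\,\widetilde v_{\,\Omega_\e,h}-\widetilde{\hat v}_\e\rangle_{W^{1,p}_0(D)}=-\langle A(\mathcal{U}_0,\widetilde{\hat v}_\e),\,\widetilde v_{\,\Omega_\e,h}-\widetilde{\hat v}_\e\rangle,
\]
where the pairings pass from $\Omega_\e$ to $D$ precisely because the test function is zero-extended (so its gradient vanishes off $\Omega_\e$). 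The right-hand side tends to $0$: the explicit Nemytskii structure of $A$ turns strong $W^{1,p}$ convergence of $\widetilde{\hat v}_\e$ into strong $L^q$ convergence of the flux $\mathcal{U}_0[(\nabla\widetilde{\hat v}_\e)^{p-2}]\nabla\widetilde{\hat v}_\e$ and of $|\widetilde{\hat v}_\e|^{p-2}\widetilde{\hat v}_\e$, hence $A(\mathcal{U}_0,\widetilde{\hat v}_\e)\to A(\mathcal{U}_0,v^\ast)$ strongly in $W^{-1,q}(D)$, while $\widetilde v_{\,\Omega_\e,h}-\widetilde{\hat v}_\e\rightharpoonup 0$ weakly; a strong-times-weak pairing then vanishes. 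Therefore $\widetilde v_{\,\Omega_\e,h}\to v^\ast$ strongly in $W^{1,p}_0(D)$.

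It remains to identify $v^\ast$. Strong convergence makes the limit passage in the weak form of \eqref{3.11} routine, since the flux now converges strongly in $L^q(D;\mathbb{R}^N)$: testing against recovery sequences (again via $(M_1)$) for arbitrary $\varphi\in W^{1,p}_0(\Omega)$ shows that $v^\ast|_\Omega$ solves \eqref{3.12}, and combined with the boundary condition from the second step and the uniqueness of the solution to \eqref{3.12} this gives $v^\ast=\widetilde v_{\,\Omega,h}$. As the limit is independent of the extracted subsequence, the whole family converges, and strongly, which is \eqref{3.10}. The main obstacle I anticipate is the second step, namely certifying that the weak limit is genuinely in $W^{1,p}_0(\Omega)$ with the correct zero-extension outside $\Omega$; the bookkeeping in the monotonicity estimate (ensuring the $\Omega_\e$-pairings coincide with the $D$-pairings of the extensions) is routine once the test functions are recognized as zero-extended.
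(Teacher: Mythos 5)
The paper itself gives no proof of Lemma \ref{Lemma 3.9}: it is quoted from Bucur and Trebeschi \cite{Bucur_Treb}. Your argument is precisely the standard one behind that citation --- a priori bound, $(M_2)$ to identify the limit and its boundary behaviour, $(M_1)$ to build a recovery sequence, a monotonicity estimate to upgrade weak to strong convergence, identification by uniqueness --- and the bookkeeping you call routine (zero-extended test functions making the $\Omega_\e$-pairings and $D$-pairings agree, strong-times-weak limit passages) is handled correctly. One caveat on your second step: you need $(M_2)$ in its standard form, namely that the weak limit of the zero extensions vanishes a.e.\ outside $\Omega$, which is what \cite{Bucur_Treb} actually proves; the paper's literal wording of $(M_2)$ only asserts $\left.\psi\right|_\Omega\in W^{1,p}_0(\Omega)$, and that weaker statement would not let you conclude $v^\ast=h$ on $D\setminus\Omega$, i.e.\ $v^\ast=\widetilde{v}_{\,\Omega,\,h}$.

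The genuine gap is the inequality you call the technical heart,
\[
c\,\left\|u-v\right\|^p_{W^{1,p}_0(D)}\ \le\ \left\langle A(\mathcal{U}_0,u)-A(\mathcal{U}_0,v),\,u-v\right\rangle_{W^{1,p}_0(D)}.
\]
This does not follow from the definition of $M_p^{\alpha,\beta}(D)$: condition \eqref{1.4} is mere non-negativity of the principal part, and the only quantitative monotonicity the standing assumptions \eqref{1.3}--\eqref{1.5} supply comes from the zero-order term,
\[
\left\langle A(\mathcal{U}_0,u)-A(\mathcal{U}_0,v),\,u-v\right\rangle_{W^{1,p}_0(D)}\ \ge\ 2^{2-p}\left\|u-v\right\|^p_{L^p(D)},
\]
so your argument, run verbatim on the stated hypotheses, proves strong convergence in $L^p(D)$ but not in $W^{1,p}_0(D)$. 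The $p$-power, gradient-controlling monotonicity you invoke is classical for the $p$-Laplacian (the actual setting of \cite{Bucur_Treb}) and for diagonal matrices $\mathrm{diag}\{\delta_1,\dots,\delta_N\}$ with $\delta_i\ge\alpha$, but for a general $\mathcal{U}_0\in M_p^{\alpha,\beta}(D)$ it is an additional structural assumption that neither you nor the paper proves. In fairness, the paper grants itself the same licence: the preliminaries (Section \ref{Sec 1}) flatly assert that $A(\mathcal{U},\cdot)$ is ``strongly monotone'' for every $\mathcal{U}\in M_p^{\alpha,\beta}(D)$, and the appendix proof of Corollary \ref{Cor 3.18} rests on the equally unproven claim that $\|\cdot\|^{\mathcal{U}^\ast}_{W^{1,\,p}_0(D)}$ is a uniformly convex equivalent norm. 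So either state the strong monotonicity explicitly as a hypothesis (or restrict to diagonal matrices), or replace that step by the paper's own device from Corollary \ref{Cor 3.18}: deduce from $\langle A(\mathcal{U}_0,\widetilde v_{\,\Omega_\e,\,h}),\widetilde v_{\,\Omega_\e,\,h}-\widetilde{\hat v}_\e\rangle_{W^{1,p}_0(D)}=0$ and your convergences that the energies converge, then combine weak convergence with norm convergence in a uniformly convex space. The delicate point for non-diagonal matrices is the same in both routes, and it deserves to be flagged rather than absorbed into the word ``routine.''
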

\begin{remark}
\label{Rem 3.12a} In general, Lemma~\ref{Lemma 3.9} is not valid if $\Omega_\e\,\stackrel{\mathrm{top}}{\longrightarrow}\,
\Omega$ (for counter-examples and more comments we refer the reader to \cite{Bucur_Treb}).
\end{remark}

We are now in a position to prove the following property.
\begin{proposition}
\label{Prop 3.13} Let $\left\{(\mathcal{U}_{\e},y_\e,z_\e)\in
\Xi_{\e}\right\}_{\e>0}$ be an arbitrary sequence of admissible solutions
to the family of optimal control problems \eqref{3.1}--\eqref{3.4}, where
$\left\{\Omega_\e\right\}_{\e>0}$ is some $H^c$-admissible
perturbation of the set $\Omega\in \mathcal{W}_w(D)$. If for a
subsequence of $\left\{(\mathcal{U}_{\e},y_\e,z_\e)\in
\Xi_\e\right\}_{\e>0}$ (still denoted by the same index $\e$) we
have
$(\mathcal{U}_{\e},\widetilde{y}_\e,\widetilde{z}_\e)\,\stackrel{\tau}{\longrightarrow}\,(\mathcal{U}^\ast,y^\ast,z^\ast)$,
then
\begin{gather}
\label{3.14.1}
y^\ast=\widetilde{y}_{\,\Omega,\,\mathcal{U}^\ast},\quad
\left.z^\ast\right|_{\Omega}\in \mathcal{H}(y_{\,\Omega,\,\mathcal{U}^\ast}), \\
\label{3.14.2}
\int_\Omega z^\ast\psi\,dx +\langle BF({y}_{\,\Omega,\,\mathcal{U}^\ast},z^\ast),\psi\rangle_{L^q(\Omega)}=\int_\Omega g\,
\psi\,dx,\;\forall\,\psi\in W_0^{1,p}(\Omega),\\
\label{3.14.3}
(\mathcal{U}^\ast,\left.y^\ast\right|_{\,\Omega},\left.z^\ast\right|_{\Omega})\in \Xi_{sol},
\end{gather}
where by $y_{\,\Omega,\,\mathcal{U}^\ast}$ we denote the weak solution of
the boundary value problem \eqref{2.7a}--\eqref{2.7b} with
$\mathcal{U}=\mathcal{U}^\ast$.
\end{proposition}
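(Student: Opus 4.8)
The plan is to read \eqref{3.14.1}--\eqref{3.14.3} as the assertion that the global limit equations already secured on $D$ by Proposition~\ref{Prop 3.3} actually ``localize'' to $\Omega$ and recover the unperturbed problem. Proposition~\ref{Prop 3.3} supplies, for the $\tau$-cluster triplet $(\mathcal{U}^\ast,y^\ast,z^\ast)$, three facts I take for granted: $\mathcal{U}^\ast\in U_{ad}$; the state identity \eqref{3.5} tested against every $\widetilde{\varphi}$ with $\varphi\in C_0^\infty(\Omega)$; and the Hammerstein identity \eqref{3.5*} tested against every $\widetilde{\psi}$ with $\psi\in C_0^\infty(\Omega)$, where the nonlinear flux and the term $BF(y^\ast,z^\ast)$ have already been identified. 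Thus the whole burden is to show that $y^\ast$ is the zero-extension of the unique weak solution $y_{\,\Omega,\,\mathcal{U}^\ast}$ and that these identities, a priori valid only on the dense class $C_0^\infty(\Omega)$, extend to all of $W_0^{1,p}(\Omega)$.

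First I would localize $y^\ast$. Since $\{\Omega_\e\}$ is an $H^c$-admissible perturbation of $\Omega\in\mathcal{W}_w(D)$, Theorem~\ref{Th 1.3} gives Mosco convergence $W_0^{1,p}(\Omega_\e)\to W_0^{1,p}(\Omega)$. Each $\widetilde{y}_\e$ is the zero-extension of $y_\e\in W_0^{1,p}(\Omega_\e)$ and $\widetilde{y}_\e\rightharpoonup y^\ast$ weakly in $W^{1,p}(D)\hookrightarrow W^{1,p}(\mathbb{R}^N)$; hence condition $(M_2)$ yields a function $y\in W_0^{1,p}(\Omega)$ with $y^\ast=\widetilde{y}$, i.e. $y^\ast$ vanishes $p$-q.e.\ on $\Omega^c$ and $y^\ast|_{\Omega}\in W_0^{1,p}(\Omega)$. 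I stress that, by Remark~\ref{Rem 3.2}, a mere support argument would \emph{not} suffice, so the use of $(M_2)$ is essential. Because the gradient of any $\widetilde{\varphi}$ with $\varphi\in C_0^\infty(\Omega)$ is supported in $\Omega$, identity \eqref{3.5} collapses to the weak formulation \eqref{2.7a} of the Dirichlet problem in $\Omega$ with control $\mathcal{U}^\ast$; since both sides are bounded linear functionals of the test function, density of $C_0^\infty(\Omega)$ in $W_0^{1,p}(\Omega)$ extends it to every $v\in W_0^{1,p}(\Omega)$. As $y^\ast|_{\Omega}\in W_0^{1,p}(\Omega)$ is then a weak solution of \eqref{2.7a}--\eqref{2.7b}, strict monotonicity of $A(\mathcal{U}^\ast,\cdot)$ forces uniqueness and gives $y^\ast|_{\Omega}=y_{\,\Omega,\,\mathcal{U}^\ast}$, whence $y^\ast=\widetilde{y}_{\,\Omega,\,\mathcal{U}^\ast}$, the first assertion of \eqref{3.14.1}. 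Lemma~\ref{Lemma 3.9} enters here as the continuity backbone: it furnishes the $H^c$-continuity of the frozen-coefficient solution map in the strong $W_0^{1,p}(D)$-topology, and its $A(\mathcal{U}^\ast,\cdot)$-harmonic functions are exactly the strongly convergent objects one feeds into the Minty-type comparison of \cite{OKogut2010,Kupenko2011} that underlies the flux identification taken from Proposition~\ref{Prop 3.3}.

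Next I would treat the Hammerstein equation. With $y^\ast=\widetilde{y}_{\,\Omega,\,\mathcal{U}^\ast}$ in hand, and because $\widetilde{\psi}$ is supported in $\Omega$, identity \eqref{3.5*} localizes to $\int_\Omega z^\ast\psi\,dx+\langle BF(y_{\,\Omega,\,\mathcal{U}^\ast},z^\ast),\psi\rangle_{L^q(\Omega)}=\int_\Omega g\,\psi\,dx$ for all $\psi\in C_0^\infty(\Omega)$. Since $z^\ast$, $BF(y_{\,\Omega,\,\mathcal{U}^\ast},z^\ast)$ and $g$ all lie in $L^p(\Omega)$, and for $p\ge 2$ on the bounded set $\Omega$ one has the continuous chain $W_0^{1,p}(\Omega)\subset L^p(\Omega)\subset L^q(\Omega)$, so that all three pairings depend continuously on $\psi$, a density argument extends the identity to every $\psi\in W_0^{1,p}(\Omega)$; this is \eqref{3.14.2}, equivalently $z^\ast|_{\Omega}\in\mathcal{H}(y_{\,\Omega,\,\mathcal{U}^\ast})$, which completes \eqref{3.14.1}. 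Collecting $\mathcal{U}^\ast\in U_{ad}$, the identification $y^\ast|_{\Omega}=y_{\,\Omega,\,\mathcal{U}^\ast}$, and $z^\ast|_{\Omega}\in\mathcal{H}(y_{\,\Omega,\,\mathcal{U}^\ast})$ then yields $(\mathcal{U}^\ast,y^\ast|_{\Omega},z^\ast|_{\Omega})\in\Xi_{sol}$, which is \eqref{3.14.3}.

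The step I expect to be the main obstacle is the localization of $y^\ast$, namely verifying that the weak limit genuinely satisfies the homogeneous Dirichlet condition on $\partial\Omega$ and does not leak mass into $D\setminus\Omega$. This is exactly where the plain $H^c$-convergence $\Omega_\e\to\Omega$ is insufficient and the capacitary density condition defining $\mathcal{W}_w(D)$ --- delivered through the Mosco convergence of Theorem~\ref{Th 1.3} and the frozen-coefficient continuity of Lemma~\ref{Lemma 3.9} --- must be invoked; without it, the identification $y^\ast=\widetilde{y}_{\,\Omega,\,\mathcal{U}^\ast}$ can fail, as the counterexamples in \cite{DMaso_Mur} illustrate.
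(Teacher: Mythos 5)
Your localization of the state equation contains a genuine gap, and it sits exactly at the point you yourself flag as the main obstacle. You claim that condition $(M_2)$ of Mosco convergence ``yields a function $y\in W_0^{1,p}(\Omega)$ with $y^\ast=\widetilde{y}$, i.e.\ $y^\ast$ vanishes $p$-q.e.\ on $\Omega^c$.'' That is a misreading of $(M_2)$: as stated in the paper, $(M_2)$ only asserts the existence of $y\in W^{1,p}_0(\Omega)$ with $y=\left.\psi\right|_{\Omega}$, i.e.\ it controls the \emph{restriction} of the weak limit to $\Omega$, and says nothing whatsoever about the behaviour of $y^\ast$ on $D\setminus\Omega$. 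Since $H^c$-convergence does not prevent the sets $\Omega_\e$ from covering, for every $\e$, a fixed region of $D\setminus\overline{\Omega}$ (think of $\Omega$ together with a far-away ball perforated by an ever finer grid of holes), the functions $\widetilde{y}_\e$ may carry mass there, and ruling out that this mass survives in the limit is a nontrivial consequence of the uniform capacitary condition $\Omega_\e\in\mathcal{W}_w(D)$. So your argument proves $y^\ast|_\Omega\in W^{1,p}_0(\Omega)$ and, together with the localization of \eqref{3.5}, the identification $y^\ast|_\Omega=y_{\,\Omega,\,\mathcal{U}^\ast}$ --- which suffices for \eqref{3.14.2}--\eqref{3.14.3} --- but it does \emph{not} prove the first equality in \eqref{3.14.1}, $y^\ast=\widetilde{y}_{\,\Omega,\,\mathcal{U}^\ast}$, which requires $y^\ast=0$ a.e.\ in $D\setminus\Omega$.

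This missing step is precisely what the paper's proof supplies, and it is also where you have mislocated the role of Lemma~\ref{Lemma 3.9}. The paper does not use that lemma as a ``backbone'' for the flux identification (that identification is carried out inside Proposition~\ref{Prop 3.3} by the Minty-type arguments of \cite{OKogut2010,Kupenko2011}); instead, it introduces the auxiliary Bucur--Trebeschi corrector problem \eqref{3.15}, whose solution equals $-y^\ast$ on $D\setminus\Omega_\e$, applies Lemma~\ref{Lemma 3.9} to get strong convergence $\widetilde{\varphi}_\e\to\widetilde{\varphi}$ in $W^{1,p}_0(D)$, tests with $\widetilde{\varphi}_\e+y^\ast-\widetilde{y}_\e$, and passes to the limit to obtain a zero-energy identity forcing $\widetilde{\varphi}=0$ a.e.\ in $D$. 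Since $\widetilde{\varphi}=-y^\ast$ on $D\setminus\Omega$ and $\varphi+y^\ast\in W^{1,p}_0(\Omega)$, this single computation delivers simultaneously the vanishing of $y^\ast$ outside $\Omega$ and the membership $y^\ast|_\Omega\in W^{1,p}_0(\Omega)$. Your shortcut via $(M_2)$ for the membership is legitimate and slightly more economical, and your treatment of the Hammerstein equation (localization of \eqref{3.5*} plus density in $W_0^{1,p}(\Omega)$) matches the paper's Step~2; but to close the proof you must either reproduce the corrector argument or supply an independent capacity-theoretic proof that the weak limit vanishes quasi-everywhere on $\Omega^c$ --- neither of which is in your proposal.
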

\begin{proof}
To begin with, we note that, by Propositions \ref{Prop 1.16} and \ref{Prop
3.3}, we can extract a subsequence of $\left\{(\mathcal{U}_{\e},y_\e,z_\e)\in
\Xi_{\e}\right\}_{\e>0}$ (still denoted by the same index) such that
\begin{align}
\mathcal{U}_\e\rightarrow
\mathcal{U}^\ast=\left[u^\ast_{1},\dots,u^\ast_{N}\right]\in
U_{ad}\ &\text{ weakly-}\ast\ \text{ in }\
L^\infty(D;\mathbb{R}^{N\times N}),\label{3.14b}\\
\widetilde{y}_\e\rightarrow y^\ast\ &\text{ weakly in }\ W^{1,\,p}_0(D),\label{3.14a}\\
\widetilde{z}_\e\rightarrow z^\ast\ &\text{ weakly in }\ L^p(\Omega),\\
y\in W^{1,\,p}_0(\Omega),\quad &\widetilde{y}\in
W^{1,\,p}_0(D).\notag
\end{align}
Since \eqref{3.14.2}--\eqref{3.14.3} are direct consequence of \eqref{3.14.1}, we divide the proof into two steps.

\vspace*{4pt}\noindent{\bf Step 1.} We prove that $y^\ast= \widetilde{y}$. Following Bucur \&
Trebeschi \cite{Bucur_Treb}, for every $\e>0$, we consider the new
boundary value problem
\begin{equation}
\label{3.15} \left.
\begin{array}{c}
-\mathrm{div}\,\left(\mathcal{U}^\ast[(\nabla \varphi_\e)^{p-2}]
\nabla\varphi_\e\right)+ |\varphi_\e|^{p-2}\varphi_\e=0\quad \text{
in }\quad
\Omega_\e,\\[1ex]
\varphi_\e=-y^\ast\ \text{ in }\ D\setminus \Omega_\e.
\end{array}
\right\}
\end{equation}
Passing to the variational statement of \eqref{3.15}, for every $\e>0$ we have
\begin{equation}
\notag
 \int_{D}\Big(\mathcal{U}^\ast[(\nabla
{\widetilde{\varphi}_\e})^{p-2}]\nabla
\widetilde{\varphi}_\e, \nabla
\widetilde{\psi}_\e\Big)_{\mathbb{R}^N}\,dx\\
 +\int_{D}
|\widetilde{\varphi}_\e|^{p-2}\widetilde{\varphi}_\e\,\widetilde{\psi}_\e\,dx=
0,\quad \forall\,\psi\in C^\infty_0(\Omega_\e).
\label{3.16}
\end{equation}
Taking in \eqref{3.16} as the text function
$\widetilde{\psi}_\e=\widetilde{\varphi}_\e+y^\ast -
\widetilde{y}_\e$, we obtain
\begin{align}
\notag
\int_{D}\Big(\mathcal{U}^\ast[(\nabla
{\widetilde{\varphi}_\e})^{p-2}]\nabla
\widetilde{\varphi}_\e, &\nabla \left(\widetilde{\varphi}_\e+y^\ast -
\widetilde{y}_\e\right)\Big)_{\mathbb{R}^N}\,dx\\
&+\int_{D}
|\widetilde{\varphi}_\e|^{p-2}\widetilde{\varphi}_\e\,\left(\widetilde{\varphi}_\e+y^\ast
- \widetilde{y}_\e\right)\,dx= 0,\ \forall\,\e>0.
\label{3.17}
\end{align}
Let $\varphi\in W^{1,\,p}(\Omega)$ be the weak solution to the
problem
\begin{equation*}
\left.
\begin{array}{c}
-\mathrm{div}\,\left(\mathcal{U}^\ast[(\nabla \varphi)^{p-2}]
\nabla\varphi\right)+  |\varphi|^{p-2}\varphi=0\quad \text{ in
}\quad
\Omega,\\[1ex]
\varphi=-y^\ast\ \text{ in }\ D\setminus \Omega.
\end{array}
\right\}
\end{equation*}
Then by Lemma \ref{Lemma 3.9}, we have
$\widetilde{\varphi}_\e\rightarrow \widetilde{\varphi}$ strongly
in $W^{1,\,p}_0(D)$. Hence,
\begin{align*}
\nabla\widetilde{\varphi}_\e\rightarrow \nabla \widetilde{\varphi}
&\text{ strongly in } L^p(D;\mathbb{R}^N), \\
\|[(\nabla \widetilde{\varphi}_\e)^{p-2}]\nabla
\widetilde{\varphi}_\e\|^q_{{L}^q(D;\mathbb{R}^N)}=\|\nabla
\widetilde{\varphi}_\e\|^p_{{L}^p(D;\mathbb{R}^N)}&\rightarrow \|\nabla
\widetilde{\varphi}\|^p_{{L}^p(D;\mathbb{R}^N)}\\
&\qquad=\|[(\nabla
\widetilde{\varphi})^{p-2}]\nabla
\widetilde{\varphi}\|^q_{{L}^q(D;\mathbb{R}^N)},\\
\nabla\widetilde{\varphi}_\e(x)\rightarrow \nabla
\widetilde{\varphi}(x) &\text{ a.e. in } D,
\end{align*}
and
\begin{align*}
\widetilde{\varphi}_\e\rightarrow \widetilde{\varphi}\
&\text{ strongly in }\ L^p(D), \\
\|\left|\widetilde{\varphi}_\e\right|^{p-2}
\widetilde{\varphi}_\e\|^q_{L^q(D)}=\|
\widetilde{\varphi}_\e\|^p_{L^p(D)}&\rightarrow \|
\widetilde{\varphi}\|^p_{L^p(D)}=\|\left|
\widetilde{\varphi}\right|^{p-2}
\widetilde{\varphi}\|^q_{L^q(D)},\\
\widetilde{\varphi}_\e(x)\rightarrow \widetilde{\varphi}(x)\
&\text{ a.e. in } D.
\end{align*}

Since the norm convergence together with pointwise convergence
imply the strong convergence, it follows that
\begin{align*}
[(\nabla \widetilde{\varphi}_\e)^{p-2}]\nabla
\widetilde{\varphi}_\e\rightarrow [(\nabla \widetilde{\varphi})^{p-2}]\nabla
\widetilde{\varphi}\ &\text{ strongly in }\ L^q(D;\mathbb{R}^N),\\
\left|\widetilde{\varphi}_\e\right|^{p-2}
\widetilde{\varphi}_\e\rightarrow \left|
\widetilde{\varphi}\right|^{p-2}
\widetilde{\varphi}\ &\text{ strongly in }\ L^q(D),\\
\nabla\left(\widetilde{\varphi}_\e+y^\ast - \widetilde{y}_\e\right)
\rightarrow \nabla\widetilde{\varphi}\ &\text{ weakly in }\
{L}^p(D;\mathbb{R}^N)\ (\text{ see \eqref{3.14a}}),\\
\left(\widetilde{\varphi}_\e+y^\ast -
\widetilde{y}_\e\right)\rightarrow \widetilde{\varphi}\ \ &\text{
strongly in }\ L^p(D),%\\
\end{align*}
Hence, the integral identity \eqref{3.17} contains only the
products of weakly and strongly convergent sequences. So, passing
to the limit in \eqref{3.17} as $\e$ tends to zero, we get
\begin{equation*}
\int_{D}{\left(\mathcal{U}^\ast[(\nabla
{\widetilde{\varphi}})^{p-2}]\nabla
\widetilde{\varphi},\nabla \widetilde{\varphi}\right)_{\mathbb{R}^N}\,dx}\\
+\int_{D}|\widetilde{\varphi}|^{p}\,dx= 0.
\end{equation*}
Taking into account the properties of $\mathcal{U}^\ast$
prescribed above, we can consider the left-hand side of the above equation as a $p$-th power of norm in $W_0^{1,p}(\Omega)$, which is equivalent to \eqref{0}. Hence, it implies that $\widetilde{\varphi}=0$ a.e. in
$D$. However, by definition $\widetilde{\varphi}=-y^\ast$ in
$D\setminus\Omega$. So, $y^\ast=0$ in $D\setminus\Omega$, and we
obtain the required property $y_{\,\mathcal{U}^\ast,\,\Omega}=\left.
y^\ast\right|_\Omega\in W^{1,\,p}_0(\Omega)$.

\vspace*{4pt}\noindent{\bf Step 2.} Our aim is to show that $\left.
z^\ast\right|_\Omega\in \mathcal{H}(y_{\,\mathcal{U}^\ast,\,\Omega})$. In view of \eqref{3.5*}, from Proposition \eqref{Prop 3.3}, we get
$$
\int_\Omega z^\ast {\psi}\,dx +\int_\Omega BF(y^\ast,z^\ast){\psi}\,dx=\int_\Omega g\,
{\psi}\,dx,\quad \forall\,\psi\in
C^\infty_0(\Omega).
$$
As was shown at the first step, $y^\ast=y_{\,\mathcal{U}^\ast,\,\Omega}$ on $\Omega$, and, therefore, we can rewrite the above equality in the following way
$$
\int_\Omega z^\ast {\psi}\,dx +\int_\Omega BF(y_{\,\mathcal{U}^\ast,\,\Omega},z^\ast){\psi}\,dx=\int_\Omega g\,
{\psi}\,dx,\quad \forall\,\psi\in C^\infty_0(\Omega),
$$
which implies the inclusion  $\left.
z^\ast\right|_\Omega\in \mathcal{H}(y_{\,\mathcal{U}^\ast,\,\Omega})$.
The
proof is complete.
\end{proof}

The results given above suggest us to study the asymptotic
behavior of the sequences of admissible triplets
$\left\{(\mathcal{U}_{\e},y_\e,z_\e)\in
\Xi_{\e}\right\}_{\e>0}$ for the case of $t$-admissible
perturbations of the set $\Omega$.

\begin{proposition}
\label{Prop 3.20.1} Let $\Omega$ be a $p$-stable open subset of
$D$. Let $\left\{(\mathcal{U}_{\e},y_\e,z_\e)\in
\Xi_\e\right\}_{\e>0}$ be a sequence of admissible triplets
for the family \eqref{3.1}--\eqref{3.4}, where
$\left\{\Omega_\e\right\}_{\e>0}\subset D$ form a $t$-admissible
perturbation of $\Omega$. Then
$$
\left\{(\mathcal{U}_{\e},\widetilde{y}_\e,\widetilde{z}_\e)\right\}_{\e>0}\
\text{ is uniformly bounded in }\ L^\infty(D;\mathbb{R}^{N\times
N})\times W^{1,\,p}_0(D)\times L^p(D)
$$
and for every $\tau$-cluster triplet $(\mathcal{U}^\ast,y^\ast,z^\ast)\in
L^\infty(D;\mathbb{R}^{N\times N})\times W^{1,\,p}_0(D)\times L^p(\Omega)$ of this
sequence, we have
\begin{enumerate}
\item[(j)] the triplet $(\mathcal{U}^\ast,y^\ast,z^\ast)$ satisfies the relations \eqref{3.4*}--\eqref{3.5*};

\item[(jj)] the triplet $(\mathcal{U}^\ast,\left.y^\ast\right|_{\,\Omega},\left.z^\ast\right|_{\,\Omega})$ is admissible to
the problem \eqref{2.7}--\eqref{2.7c}, i.e.,
$y^\ast=\widetilde{y}_{\,\Omega,\,\mathcal{U}^\ast}$, $\left.z^\ast\right|_{\Omega}\in \mathcal{H}({y}_{\,\Omega,\,\mathcal{U}^\ast})$, where
${y}_{\,\Omega,\,\mathcal{U}^\ast}$ stands for the weak solution of the
boundary value problem \eqref{2.7a}--\eqref{2.7b} under
$\mathcal{U}=\mathcal{U}^\ast$.
\end{enumerate}
\end{proposition}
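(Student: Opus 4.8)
The plan is to follow the scheme of Propositions \ref{Prop 3.3} and \ref{Prop 3.13}, substituting for the two ingredients that are special to the Hausdorff--complementary setting --- the Mosco recovery property $(M_1)$ and Lemma \ref{Lemma 3.9} --- the two defining properties $(D_1)$, $(D_2)$ of topological convergence (Definition \ref{Def 1.2}) together with the $p$-stability of $\Omega$. First I would reproduce the uniform boundedness exactly as in Proposition \ref{Prop 3.3}: the bound on $\{\mathcal{U}_\e\}$ in $L^\infty(D;\mathbb{R}^{N\times N})$ comes from $\mathcal{U}_\e\in U_{ad}$ and Remark \ref{rem 1.8}; the bound on $\{\widetilde{y}_\e\}$ in $W^{1,p}_0(D)$ from the a priori estimate \eqref{1.17} of Proposition \ref{Prop 1.16}; and the bound on $\{\widetilde{z}_\e\}$ in $L^p(D)$ from estimate \eqref{7*} in the proof of Theorem \ref{Th 2.8}. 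This guarantees the existence of $\tau$-cluster triplets, and along a suitable subsequence we may assume $\mathcal{U}_\e\to\mathcal{U}^\ast$ weakly-$\ast$, $\widetilde{y}_\e\rightharpoonup y^\ast$ weakly in $W^{1,p}_0(D)$, and $\widetilde{z}_\e\rightharpoonup z^\ast$ weakly in $L^p(D)$, with $\mathcal{U}^\ast\in U_{ad}$ by Remark \ref{rem 1.8}; this already yields \eqref{3.4*}.

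For part (j) the key observation is that, in contrast to the Mosco setting, the recovery sequences may be taken \emph{constant}. Indeed, fix test functions $\varphi,\psi\in C^\infty_0(\Omega\setminus K_0)$, so that $\mathrm{supp}\,\varphi$ and $\mathrm{supp}\,\psi$ are compact subsets of $\Omega\setminus K_0$. By property $(D_1)$ of Definition \ref{Def 1.2} we then have $\mathrm{supp}\,\varphi,\mathrm{supp}\,\psi\subset\subset\Omega_\e$ for all $\e$ small enough, hence $\varphi,\psi\in C^\infty_0(\Omega_\e)$, and we may insert them directly into the weak formulations \eqref{3.6}--\eqref{3.6*} (taking $\varphi_\e\equiv\varphi$, $\psi_\e\equiv\psi$). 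The limit passage then needs only the weak convergences already collected in Proposition \ref{Prop 3.3}, and the nonlinear limits are identified exactly as there: $\mathcal{U}_\e[(\nabla\widetilde{y}_\e)^{p-2}]\nabla\widetilde{y}_\e\rightharpoonup\mathcal{U}^\ast[(\nabla y^\ast)^{p-2}]\nabla y^\ast$ via the solenoidal strong $W^{-1,q}(D)$-convergence of $\mathrm{div}\,u_{i,\e}$ as in \cite{OKogut2010,Kupenko2011}, and $F(\widetilde{y}_\e,\widetilde{z}_\e)\rightharpoonup F(y^\ast,z^\ast)$ via the compactness and u.s.b.v.\ properties of $F$ as in the proof of Theorem \ref{Th 2.8}. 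Since $K_0$ has zero $p$-capacity, $C^\infty_0(\Omega\setminus K_0)$ is dense in $W^{1,p}_0(\Omega)$, so the two resulting continuous functionals of the test function vanish on all of $W^{1,p}_0(\Omega)$, and in particular the identities extend to every $\varphi,\psi\in C^\infty_0(\Omega)$; this is precisely \eqref{3.5}--\eqref{3.5*}.

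The hard part will be part (jj), because Lemma \ref{Lemma 3.9} --- the engine of Proposition \ref{Prop 3.13} --- is no longer available under $t$-convergence (Remark \ref{Rem 3.12a}). Here I would invoke property $(D_2)$ instead. For every open set $U$ with $\overline{\Omega}\cup K_1\subset U$ we have $\Omega_\e\subset U$ for $\e$ small, whence $\widetilde{y}_\e=0$ a.e.\ on $D\setminus U$; letting $\e\to0$ and using the compact embedding $W^{1,p}_0(D)\hookrightarrow L^p(D)$, so that $\widetilde{y}_\e\to y^\ast$ strongly in $L^p(D)$, gives $y^\ast=0$ a.e.\ on $D\setminus U$. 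Intersecting over all such $U$ and recalling that $K_1$ is Lebesgue--null yields $y^\ast=0$ a.e.\ on $D\setminus\overline{\Omega}$; since the zero extension of $y^\ast\in W^{1,p}_0(D)$ to $\mathbb{R}^N$ vanishes outside $D$ as well, we conclude $y^\ast=0$ a.e.\ on $\mathrm{int}\,\Omega^c$. As $\Omega$ is $p$-stable (Remark \ref{Rem 3.2a}), this forces $\left.y^\ast\right|_\Omega\in W^{1,p}_0(\Omega)$.

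Finally, with $\left.y^\ast\right|_\Omega\in W^{1,p}_0(\Omega)$ in hand, identity \eqref{3.5} is exactly the weak formulation of the monotone Dirichlet problem \eqref{2.7a}--\eqref{2.7b} for $\mathcal{U}=\mathcal{U}^\ast$, tested against the dense family $C^\infty_0(\Omega)$; by continuity of $A(\mathcal{U}^\ast,\cdot)$ and uniqueness of its weak solution I conclude $\left.y^\ast\right|_\Omega=y_{\,\Omega,\,\mathcal{U}^\ast}$, hence $y^\ast=\widetilde{y}_{\,\Omega,\,\mathcal{U}^\ast}$. Substituting $y^\ast=y_{\,\Omega,\,\mathcal{U}^\ast}$ into \eqref{3.5*} then shows $\left.z^\ast\right|_\Omega\in\mathcal{H}(y_{\,\Omega,\,\mathcal{U}^\ast})$, which is (jj). The only genuinely new difficulty relative to Proposition \ref{Prop 3.13} is thus the replacement of the failed Lemma \ref{Lemma 3.9} by this $(D_2)$-plus-$p$-stability argument; the remainder is a transcription of the earlier proofs.
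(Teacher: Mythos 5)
Your proposal is correct and follows essentially the same route as the paper's proof: for (j) you use property $(D_1)$ to insert constant test functions from $C^\infty_0(\Omega\setminus K_0)$ (dense in $W^{1,p}_0(\Omega)$ since $C_p(K_0,D)=0$) and repeat the limit passage of Proposition \ref{Prop 3.3}, and for (jj) you combine property $(D_2)$ with the strong $L^p(D)$ convergence of $\widetilde{y}_\e$ and the $p$-stability of $\Omega$ to get $\left.y^\ast\right|_\Omega\in W^{1,p}_0(\Omega)$, exactly as the paper does (your phrasing with arbitrary open neighborhoods $U$ of $\overline{\Omega}\cup K_1$ is a trivially equivalent variant of the paper's argument with closed balls $B_0$ disjoint from $\overline{\Omega}\cup K_1$). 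Your explicit final step --- identifying $\left.y^\ast\right|_\Omega$ with $y_{\,\Omega,\,\mathcal{U}^\ast}$ via uniqueness of the weak solution and then reading off the Hammerstein inclusion from \eqref{3.5*} --- is precisely what the paper delegates to the argument of Proposition \ref{Prop 3.13}.
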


\begin{proof}
Since $\Omega_\e\,\stackrel{\mathrm{top}}{\longrightarrow}\,
\Omega$ in the sense of Definition \ref{Def 1.2}, it follows that
for any $\varphi,\psi\in C^\infty_0(\Omega\setminus K_0)$ we have
$\mathrm{supp}\,\varphi\subset \Omega_\e$, $\mathrm{supp}\,\psi\subset \Omega_\e$ for all $\e>0$ small
enough. Moreover, since the set $K_0$ has zero $p$-capacity, it
follows that $C^\infty_0(\Omega\setminus K_0)$ is dense in
$W^{1,\,p}_0(\Omega)$. Therefore, the verification of item (j) can
be done in an analogous way to the proof of Proposition \ref{Prop
3.3} replacing therein the sequences $\left\{\varphi_\e\in
W^{1,\,p}_0(\Omega_\e)\right\}_{\e>0}$ and $\left\{\psi_\e\in
W^{1,\,p}_0(\Omega_\e)\right\}_{\e>0}$ by the still functions
$\varphi$ and $\psi$. As for the rest, we have to repeat all arguments of
that proof.

To prove the assertion (jj), it is enough to show that
$\left.y^\ast\right|_{\,\Omega}\in W^{1,\,p}_0(\Omega)$. To do so,
let $B_0$ be an arbitrary closed ball not intersecting
$\overline{\Omega}\cup K_1$. Then from \eqref{3.2}--\eqref{3.3} it
follows that $\widetilde{y}_\e=\widetilde{y}_{\,\Omega_\e,\mathcal{U}_\e}=0$
almost everywhere in $B_0$ whenever the parameter $\e$ is small
enough. Since by (j) and Sobolev Embedding Theorem
$\widetilde{y}_{\e}$ converges to $y^\ast$
strongly in $L^p(D)$, it follows that the same is true for the
limit function $y^\ast$. As the ball $B_0$ was chosen arbitrary,
and $K_1$ is of Lebesgue measure zero, it follows that
$\mathrm{supp}\, y^\ast\subset\Omega$. Then, by Fubini's Theorem,
we have  $\mathrm{supp}\, y^\ast\subset\overline{\Omega}$. Hence,
using the properties of $p$-stable domains (see Remark \ref{Rem
3.2a}), we just come to the desired conclusion:
$\left.y^\ast\right|_{\,\Omega}\in W^{1,\,p}_0(\Omega)$. The rest of the proof should be quite similar to the one of Proposition \ref{Prop 3.13}, where we showed, that $\left.z^\ast\right|_\Omega\in \mathcal{H}(\left.y^\ast\right|_{\,\Omega})$. The proof is
complete.
\end{proof}

\begin{corollary}
\label{Cor 3.18} Let $\{(\mathcal{U}_\e,y_\e,z_\e)\in \Xi_\e\}_{\e>0}$ be a sequence such that
$\mathcal{U}_\e\equiv \mathcal{U}^\ast$, $\forall\,\e>0$, where $\mathcal{U}^\ast\in U_{ad}$ is an admissible
control.  Let $\left\{y_{\,\Omega_\e,\,\mathcal{U}^\ast}\in
W^{1,\,p}_0(\Omega_\e)\right\}_{\e>0}$  be the corresponding
solutions of \eqref{3.2}--\eqref{3.3} and  let $z_\e\in \mathcal{H}(y_{\,\Omega_\e,\,\mathcal{U}^\ast})$ be any
solutions of \eqref{3.3} for each $\e>0$. Then, under assumptions of
Proposition \ref{Prop 3.13} or Proposition \ref{Prop 3.20.1}, we
have that, within a subsequence still denoted by the same index $\e$, the following convergence takes place
\begin{gather*}
\widetilde{y}_{\,\Omega_\e,\,\mathcal{U}^\ast}\rightarrow
\widetilde{y}_{\,\Omega,\,\mathcal{U}^\ast}\ \text{ strongly in }\
W^{1,\,p}_0(D),\\
\widetilde{z}_\e\rightarrow z^\ast \ \text{ strongly in }\
L^p(D), \ \text{ and }\ \left.z^\ast\right|_\Omega\in\mathcal{H}({y}_{\,\Omega,\,\mathcal{U}^\ast}).
\end{gather*}
\end{corollary}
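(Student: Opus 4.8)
The plan is to observe that Propositions \ref{Prop 3.13} and \ref{Prop 3.20.1} already supply the \emph{weak} half of the statement: along a subsequence with $\mathcal{U}_\e\equiv\mathcal{U}^\ast$ one has $\widetilde{y}_\e\rightharpoonup y^\ast=\widetilde{y}_{\,\Omega,\,\mathcal{U}^\ast}$ in $W^{1,p}_0(D)$, $\widetilde{z}_\e\rightharpoonup z^\ast$ in $L^p(D)$ with $\left.z^\ast\right|_\Omega\in\mathcal{H}(y_{\,\Omega,\,\mathcal{U}^\ast})$, together with the weak flux identifications $\mathcal{U}^\ast[(\nabla\widetilde{y}_\e)^{p-2}]\nabla\widetilde{y}_\e\rightharpoonup\mathcal{U}^\ast[(\nabla y^\ast)^{p-2}]\nabla y^\ast$ and $|\widetilde{y}_\e|^{p-2}\widetilde{y}_\e\rightharpoonup|y^\ast|^{p-2}y^\ast$ in $L^q(D)$, and $F(\widetilde{y}_\e,\widetilde{z}_\e)\rightharpoonup F(y^\ast,z^\ast)$ in $L^q(D)$ (these are \eqref{3.6a}--\eqref{3.6b}). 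Hence only two upgrades remain, and I would carry them out in this order since the second relies on the first: from weak to \emph{strong} convergence of $\widetilde{y}_\e$ in $W^{1,p}_0(D)$, and from weak to strong convergence of $\widetilde{z}_\e$ in $L^p(D)$.

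For the state $y$ I would run an energy argument. Taking $y_\e$ itself as test function in \eqref{3.2}--\eqref{3.3} and using that the zero extension vanishes together with its gradient off $\Omega_\e$ gives
\begin{equation*}
\langle A(\mathcal{U}^\ast,\widetilde{y}_\e),\widetilde{y}_\e\rangle_{W^{1,p}_0(D)}=\langle f,\widetilde{y}_\e\rangle_{W^{1,p}_0(D)}\longrightarrow\langle f,y^\ast\rangle_{W^{1,p}_0(D)}=\langle A(\mathcal{U}^\ast,y^\ast),y^\ast\rangle_{W^{1,p}_0(D)},
\end{equation*}
where the convergence uses that $f$ is fixed and $\widetilde{y}_\e\rightharpoonup y^\ast$, and the last equality holds because $y^\ast=\widetilde{y}_{\,\Omega,\,\mathcal{U}^\ast}$ solves \eqref{2.7a}. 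The weak convergences recalled above mean $A(\mathcal{U}^\ast,\widetilde{y}_\e)\rightharpoonup A(\mathcal{U}^\ast,y^\ast)$ in $W^{-1,q}(D)$, so expanding the nonnegative quantity
\begin{equation*}
0\le\langle A(\mathcal{U}^\ast,\widetilde{y}_\e)-A(\mathcal{U}^\ast,y^\ast),\widetilde{y}_\e-y^\ast\rangle_{W^{1,p}_0(D)}
\end{equation*}
term by term, one checks it tends to $0$. Its matrix part (nonnegative by \eqref{1.4}) and its scalar part $(|\widetilde{y}_\e|^{p-2}\widetilde{y}_\e-|y^\ast|^{p-2}y^\ast,\widetilde{y}_\e-y^\ast)$ are separately nonnegative, hence each tends to $0$ in $L^1(D)$. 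The scalar part gives $\widetilde{y}_\e\to y^\ast$ in $L^p(D)$, while extracting a.e.\ convergence from the matrix part and invoking the coercivity/strict monotonicity structure \eqref{1.4}--\eqref{1.5} of $\mathcal{U}^\ast$ yields $\nabla\widetilde{y}_\e\to\nabla y^\ast$ a.e.; an equi-integrability (Vitali) argument then promotes this to strong convergence in $W^{1,p}_0(D)$. This is the $f$-data analogue of Lemma \ref{Lemma 3.9}.

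For $z$ I would mirror the compactness mechanism of Corollary \ref{Rem 1.8} (i.e.\ the tail of the proof of Theorem \ref{Th 2.8}), now that $\widetilde{y}_\e\to y^\ast$ strongly. The coercivity hypothesis on $F$ in Theorem \ref{Th 1.1*} first bounds $\|\widetilde{z}_\e\|_{L^p(D)}\le\lambda$ as in \eqref{7*}. Substituting the Hammerstein identity $z_\e=g-BF(y_\e,z_\e)$ into $\langle F(y_\e,z_\e),z_\e\rangle$ and passing to the limit, using $F(\widetilde{y}_\e,\widetilde{z}_\e)\rightharpoonup F(y^\ast,z^\ast)$, the strong convergence of the (extended) data, and the weak lower semicontinuity of $\nu\mapsto\langle\nu,B\nu\rangle_{L^p(D)}$ granted by positivity of $B$ (already used in \eqref{1.21.1}), one obtains $\limsup_\e\langle F(\widetilde{y}_\e,\widetilde{z}_\e),\widetilde{z}_\e\rangle\le\langle F(y^\ast,z^\ast),z^\ast\rangle$ after invoking the limit equation $z^\ast+BF(y^\ast,z^\ast)=g$; the matching $\liminf$ is exactly the $(\mathfrak{A})$-property. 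Thus $\lim_\e\langle F(\widetilde{y}_\e,\widetilde{z}_\e),\widetilde{z}_\e\rangle=\langle F(y^\ast,z^\ast),z^\ast\rangle$, whereupon the $(\mathfrak{M})$-property forces $\widetilde{z}_\e\to z^\ast$ strongly in $L^p(D)$; the inclusion $\left.z^\ast\right|_\Omega\in\mathcal{H}(y_{\,\Omega,\,\mathcal{U}^\ast})$ is inherited from Proposition \ref{Prop 3.13}.

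The main obstacle I anticipate is not the abstract monotonicity bookkeeping but the interplay with the moving boundary $\partial\Omega_\e$. Concretely: in the energy identity for $y$ one must ensure that $y_\e\in W^{1,p}_0(\Omega_\e)$ extends admissibly and that the weak flux limit is the \emph{correct} one across $\Omega_\e\triangle\Omega$ — which is precisely what Propositions \ref{Prop 3.3} and \ref{Prop 3.13} secure; and for $z$ the pairings $\langle F(y_\e,z_\e),g\rangle$ and $\langle F(y_\e,z_\e),BF(y_\e,z_\e)\rangle$ are integrals over $\Omega_\e$, so the limit passage needs the data $g\chi_{\Omega_\e}$ to converge strongly and the consistency of $F$ and $B$ under restriction/extension to subdomains. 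Verifying that these localizations behave well under the admissible perturbation — so that the clean fixed-domain computation of Theorem \ref{Th 2.8} survives the extension to $D$ — is the delicate point.
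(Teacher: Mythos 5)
Your proposal follows the paper's skeleton almost exactly: Propositions \ref{Prop 3.13} and \ref{Prop 3.20.1} supply the weak $\tau$-limit and the identifications $y^\ast=\widetilde{y}_{\,\Omega,\,\mathcal{U}^\ast}$, $\left.z^\ast\right|_\Omega\in\mathcal{H}(y_{\,\Omega,\,\mathcal{U}^\ast})$; the energy identity obtained by testing \eqref{3.2} with $y_\e$ and \eqref{2.7a} with the limit function gives convergence of energies; and the $z$-component is upgraded by the $(\mathfrak{A})$/$(\mathfrak{M})$ machinery (the paper simply cites Corollary \ref{Rem 1.8} once $\widetilde{y}_\e\to\widetilde{y}$ strongly, while you re-derive its proof inline --- same mechanism). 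The one genuinely different step is how convergence of energies becomes strong $W^{1,\,p}_0(D)$-convergence of $\widetilde{y}_\e$. The paper's route: the energy $\int_D\left(\mathcal{U}^\ast[(\nabla y)^{p-2}]\nabla y,\nabla y\right)_{\mathbb{R}^N}dx+\int_D|y|^p\,dx$ is the $p$-th power of an equivalent norm on $W^{1,\,p}_0(D)$, the space so normed is uniformly convex, and weak convergence plus convergence of these norms yields strong convergence (Radon--Riesz property); no splitting, no a.e.\ gradients, no Vitali. Your route: the Browder--Minty splitting of $\langle A(\mathcal{U}^\ast,\widetilde{y}_\e)-A(\mathcal{U}^\ast,y^\ast),\widetilde{y}_\e-y^\ast\rangle\to 0$ into scalar and matrix parts, then a.e.\ convergence of gradients plus Vitali.

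Here is the weak point of your route, and what the paper's choice buys. Extracting $\nabla\widetilde{y}_\e\to\nabla y^\ast$ a.e.\ from the vanishing of the matrix part requires \emph{strict} monotonicity of $\zeta\mapsto\mathcal{U}^\ast(x)[\zeta^{p-2}]\zeta$, whereas \eqref{1.4} asserts only monotonicity ($\ge 0$): for a merely monotone map the matrix part can vanish while the gradients stay apart, so this step, as written, is not justified by the stated hypotheses --- ``coercivity \eqref{1.5}'' does not substitute, since it controls the pairing at $\eta=0$ only. The defect is repairable: for $p=2$ coercivity \eqref{1.5} literally is strong monotonicity, and for $p>2$ one can check that \eqref{1.4} forces the symmetric matrices of $M_{p}^{\alpha,\beta}(D)$ to be diagonal, after which strong monotonicity with constant $\alpha\,2^{2-p}$ holds and even makes the a.e./Vitali detour unnecessary; but this has to be proved, it is not a quotable hypothesis. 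The paper's uniform-convexity argument is designed precisely so that only the energy identity is needed and no strictness of the flux monotonicity ever enters. Finally, your closing concern about restriction/extension across the moving domain is legitimate but already discharged: the zero-extension conventions and the identifications \eqref{3.5*} and \eqref{3.6b} established in Proposition \ref{Prop 3.3} are exactly what let the fixed-domain computation of Theorem \ref{Th 2.8} survive on $D$, and the paper's proof of the corollary relies on them in the same way.
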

\begin{proof}
The proof is given in Appendix.
\end{proof}

\section{Mosco-stability of optimal control problems}
\label{Sec_4}

We begin this section with the following concept.

\begin{definition}
\label{Def 4.1} We say that the optimal control problem
\eqref{2.7}--\eqref{2.7c} in $\Omega$ is Mosco-stable in
$L^\infty(D;\mathbb{R}^{N\times N})\times W^{1,\,p}_0(D)\times L^p(\Omega)$ along
the perturbation $\left\{\Omega_\e\right\}_{\e>0}$  of $\Omega$, if the following conditions are
satisfied
\begin{enumerate}
\item[(i)]
if $\left\{(\mathcal{U}^0_\e,
y^{\,0}_\e,z^{\,0}_\e)\in{\Xi}_{\e}\right\}_{\e>0}$ is a sequence of
optimal solutions to the perturbed problems
\eqref{3.1}--\eqref{3.4}, then this sequence is relatively
$\tau$-compact in $L^\infty(D;\break\mathbb{R}^{N\times N})\times
W^{1,\,p}_0(D)\times L^p(D)$;
\item[(ii)] each $\tau$-cluster triplet of $\left\{(\mathcal{U}^0_\e,
y^{\,0}_\e,z^{\,0}_\e)\in{\Xi}_{\e}\right\}_{\e>0}$ is an optimal
solution to the original problem \eqref{2.7}--\eqref{2.7c}.
\end{enumerate}
Moreover, if
\begin{equation}
\label{4.4} (\mathcal{U}^0_\e,
\widetilde{y}^{\,0}_\e,\widetilde{z}^{\,0}_\e)\,\stackrel{\tau}{\longrightarrow}\,
(\mathcal{U}^0,y^{\,0},z^0),
\end{equation}
then $(\mathcal{U}^0,\left. y^{\,0}\right|_{\Omega},\left. z^{\,0}\right|_{\Omega})\in \Xi_{sol}$ and
\begin{equation}
\label{4.5} \inf_{(\mathcal{U},\,y,\,z)\in\,\Xi_{sol}}I_{\,\Omega}(\mathcal{U},y,z)=
I_{\,\Omega}(\mathcal{U}^0,\left. y^{\,0}\right|_{\Omega},\left. z^{\,0}\right|_{\Omega})  =\lim_{\e\to
0}\inf_{(\mathcal{U}_{\e}, y_{\e},z_{\e})\in\,\Xi_{\e}}
I_{\,\Omega_\e}(\mathcal{U}_{\e}, y_{\e},z_{\e}).
\end{equation}
\end{definition}
Our next intention is to derive the sufficient conditions for the
Mosco-stability of optimal control problem
\eqref{2.7}--\eqref{2.7b}.

\begin{theorem}
\label{Th 3.21} Let $\Omega$, $\left\{\Omega_\e\right\}_{\e>0}$ be
open subsets of $D$, and let
$$
\Xi_{\e}\subset L^\infty(D;\mathbb{R}^{N\times N})\times
W^{1,\,p}_0(\Omega_\e)\ \text{ and }\ \Xi_{sol}\subset
L^\infty(D;\mathbb{R}^{N\times N})\times W^{1,\,p}_0(\Omega)
$$
be the sets of admissible solutions to optimal control
problems \eqref{3.1}--\eqref{3.4} and \eqref{2.7}--\eqref{2.7c},
respectively. Assume that
the distributions $z_d\in L^p(D)$  in the cost functional
\eqref{2.7} and $g\in L^p(D)$ in \eqref{2.7c} are such that
\begin{equation}
\label{4.13} z_d(x)=z_d(x) \chi_{\,\Omega}(x),\quad g(x)=g(x)\chi_{\,\Omega}(x)\quad\text{ for a.e.
}\ x\in D.
\end{equation}
Assume also that Hammerstein equation \eqref{3.4.1}  possesses property $(\mathfrak{B})$ and
at least one of the suppositions
\begin{enumerate}
\item[1.] $\Omega\in \mathcal{W}_w(D)$ and $\left\{\Omega_\e\right\}_{\e>0}$ is an $H^c$-admissible
perturbation of $\Omega$;

\item[2.] $\Omega$ is a $p$-stable domain and $\left\{\Omega_\e\right\}_{\e>0}$ is a $t$-admissible
perturbation of $\Omega$;
\end{enumerate}
holds true.

Then the following assertions are valid:
\begin{enumerate}
\item[$(MS_1)$] if $\left\{\e_k\right\}_{k\in \mathbb{N}}$ is a
numerical sequence converging to $0$, and
$\left\{(\mathcal{U}_k,y_k,z_k)\right\}_{k\in \mathbb{N}}$ is a sequence
satisfying
\begin{gather*}
(\mathcal{U}_k,y_k,z_k)\in \Xi_{\e_k},\quad\forall\, k\in \mathbb{N},\
\text{ and
}\\
(\mathcal{U}_k,\widetilde{y}_k,\widetilde{z}_k)\,\stackrel{\tau}{\longrightarrow}\,
(\mathcal{U},\psi,\xi)\ \text{  in }\ L^\infty(D;\mathbb{R}^{N\times N})\times
W^{1,\,p}_0(D)\times L^p(D),
\end{gather*}
then there exist functions $y\in W^{1,\,p}_0(\Omega)$ and $z\in L^p(\Omega)$ such that
$y=\left.\psi\right|_{\Omega}$, $z=\left.\xi\right|_{\Omega}$, $z\in\mathcal{H}(y)$, $(\mathcal{U},y,z)\in\Xi_{\Omega}$, and
$$
\liminf_{k\to\infty}I_{\,\Omega_{\e_k}}(\mathcal{U}_k,y_k,z_k)\ge
I_{\,\Omega}(\mathcal{U},\left.y\right|_{\Omega},\left.z\right|_{\Omega});
$$

\item[$(MS_2)$] for any admissible triplet $(\mathcal{U},y,z)\in\Xi_{sol}$,  there exists  a realizing
sequence $\left\{(\mathcal{U}_\e,y_\e,z_\e)\in \Xi_{\e}\right\}_{\e>0}$
such that
\begin{align*}
\mathcal{U}_\e\rightarrow \mathcal{U}&\mbox{ strongly in }
L^\infty(D;\mathbb{R}^{N\times N}),\\
\widetilde{y}_\e\rightarrow \widetilde{y}&\mbox{ strongly in }
W^{1,\,p}_0(D),\\
\widetilde{z_\e}\to \widetilde{z}&\mbox{ strongly in }L^p(D),\\
\limsup_{\e\to 0}I_{\,\Omega_{\e}}&(\mathcal{U}_\e,y_\e,z_\e)\le
I_{\,\Omega}(\mathcal{U},y,z).
\end{align*}

\end{enumerate}
\end{theorem}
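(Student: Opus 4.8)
The plan is to prove the two assertions $(MS_1)$ and $(MS_2)$ separately, since they realize respectively the $\liminf$ (lower bound) and the realizing-sequence ($\limsup$) halves of Mosco-stability, and to lean on the $\tau$-closure machinery already developed in Section \ref{Sec 3}.

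For $(MS_1)$, the admissibility of the limit comes essentially for free from the preceding propositions: applying Proposition \ref{Prop 3.13} under hypothesis 1 (or Proposition \ref{Prop 3.20.1} under hypothesis 2) to the $\tau$-convergent sequence $(\mathcal{U}_k,\widetilde{y}_k,\widetilde{z}_k)\,\stackrel{\tau}{\longrightarrow}\,(\mathcal{U},\psi,\xi)$ yields $\mathcal{U}\in U_{ad}$, $\psi=\widetilde{y}_{\,\Omega,\mathcal{U}}$, and $\left.\xi\right|_\Omega\in\mathcal{H}(y_{\,\Omega,\mathcal{U}})$; setting $y=\left.\psi\right|_\Omega$ and $z=\left.\xi\right|_\Omega$ gives $(\mathcal{U},y,z)\in\Xi_{sol}$. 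For the cost inequality I would transport the functional onto the fixed domain $D$ by zero-extension: since $z_d=z_d\chi_\Omega$ by \eqref{4.13}, one has
\[
I_{\,\Omega_{\e_k}}(\mathcal{U}_k,y_k,z_k)=\int_D\left|\widetilde{z}_k-z_d\chi_{\Omega_{\e_k}}\right|^p\,dx.
\]
Because compact subsets of $\Omega$ are contained in $\Omega_{\e_k}$ for $k$ large (a property shared by both admissible perturbations) and $z_d$ is supported in $\Omega$, dominated convergence gives $z_d\chi_{\Omega_{\e_k}}\to z_d$ strongly in $L^p(D)$; combined with $\widetilde{z}_k\rightharpoonup\xi$ weakly in $L^p(D)$, the difference converges weakly to $\xi-z_d$, and weak lower semicontinuity of the $L^p$-norm together with $\int_D|\xi-z_d|^p\,dx\ge\int_\Omega|z-z_d|^p\,dx$ (again using $\mathrm{supp}\,z_d\subset\Omega$) delivers the required $\liminf$ bound.

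For $(MS_2)$ I would build the realizing sequence explicitly. As $\mathcal{U}\in U_{ad}$ is defined on all of $D$, the frozen choice $\mathcal{U}_\e\equiv\mathcal{U}$ is admissible for every $\Omega_\e$ and converges strongly trivially; let $y_\e=y_{\,\Omega_\e,\mathcal{U}}$ be the weak solution of \eqref{3.2}--\eqref{3.3} in $\Omega_\e$, so that Corollary \ref{Cor 3.18} yields $\widetilde{y}_\e\to\widetilde{y}_{\,\Omega,\mathcal{U}}=\widetilde{y}$ strongly in $W^{1,p}_0(D)$. The decisive step is then the $(\mathfrak{B})$-property: applied to the strongly convergent sequence $\{\widetilde{y}_\e\}$ and to the pair $(\widetilde{y},\widetilde{z})$ with $\widetilde{z}\in\mathcal{H}(\widetilde{y})$, it produces $z_\e\in\mathcal{H}(y_\e)$ with $\widetilde{z}_\e\to\widetilde{z}$ strongly in $L^p(D)$. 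By construction $(\mathcal{U}_\e,y_\e,z_\e)\in\Xi_\e$ and all three strong convergences hold; writing the cost over $D$ as in $(MS_1)$ and using $z_d\chi_{\Omega_\e}\to z_d$ strongly together with the now-strong convergence of $\widetilde{z}_\e$, one passes to the limit to obtain $\lim_{\e\to0}I_{\,\Omega_\e}(\mathcal{U}_\e,y_\e,z_\e)=\int_\Omega|z-z_d|^p\,dx=I_\Omega(\mathcal{U},y,z)$, which is even stronger than the claimed $\limsup$ estimate.

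I expect the genuine obstacle to lie in the correct use of the $(\mathfrak{B})$-property within $(MS_2)$, namely in the bookkeeping that identifies the Hammerstein solution set on the moving domains $\Omega_\e$ with its counterpart on the fixed domain $D$. One has to verify that, through zero-extension and the support assumption $g=g\chi_\Omega$, the relation $z\in\mathcal{H}(y)$ on $\Omega$ is equivalent to $\widetilde{z}\in\mathcal{H}(\widetilde{y})$ on $D$, so that $(\mathfrak{B})$ (formulated on $D$) is legitimately applicable and the resulting $z_\e$ are admissible for \eqref{3.4} in $\Omega_\e$. Once this transfer is secured, the remaining ingredients---the weak lower semicontinuity in $(MS_1)$, the strong state convergence from Corollary \ref{Cor 3.18}, and the continuity of the cost under the established strong convergences---are routine.
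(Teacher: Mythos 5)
Your proposal follows essentially the same route as the paper's proof: for $(MS_1)$, admissibility of the $\tau$-limit via Proposition \ref{Prop 3.13} (resp. Proposition \ref{Prop 3.20.1}) combined with weak lower semicontinuity of the $L^p$-norm, and for $(MS_2)$, the frozen control $\mathcal{U}_\e\equiv\mathcal{U}$ with states $y_{\,\Omega_\e,\mathcal{U}}$ converging strongly by Corollary \ref{Cor 3.18} and the $(\mathfrak{B})$-property supplying Hammerstein solutions $z_\e\in\mathcal{H}(y_\e)$ converging strongly to $\widetilde{z}$. If anything, you are slightly more careful than the paper at two points it glosses over: the exact identity $I_{\,\Omega_{\e_k}}=\int_D\bigl|\widetilde{z}_k-z_d\chi_{\Omega_{\e_k}}\bigr|^p\,dx$ (the paper writes this with $z_d$ in place of $z_d\chi_{\Omega_{\e_k}}$, which is only asymptotically exact), and the zero-extension transfer of $z\in\mathcal{H}(y)$ on $\Omega$ to $\widetilde{z}\in\mathcal{H}(\widetilde{y})$ on $D$, which the paper asserts from $g=g\chi_\Omega$ but which you correctly flag as the step requiring verification before $(\mathfrak{B})$ can be invoked.
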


\begin{proof}
To begin with, we note that the first part of property ($MS_1$) is the direct
consequence of Propositions \ref{Prop 3.13} and \ref{Prop 3.20.1}.
So, it remains to check the corresponding property for cost functionals. Indeed, since $z_k\to z$ weakly in $L^p(D)$, in view of lower weak semicontinuity of norm in $L^p(D)$, we have
\begin{align*}
\liminf_{k\to\infty}I_{\,\Omega_{\e_k}}(\mathcal{U}_k,y_k,z_k)&=
\liminf_{k\to\infty}\int_{D}
|\widetilde{z}_{k}-z_d|^p\,dx
\ge \int_{D} |z-z_d|^p\,dx \\
&\ge
 \int_{\Omega} |z-z_d|^p\,dx=\int_{\Omega} \left|\left.z\right|_{\Omega}-z_d\right|^p\,dx =I_{\,\Omega}(\mathcal{U},\left.
y\right|_{\,\Omega},\left.
z\right|_{\,\Omega}).
\end{align*}
Hence, the assertion ($MS_1$) holds true.

Further, we prove ($MS_2$). In view of our initial assumptions, the set of admissible pairs
$\Xi_{sol}$ to the problem \eqref{2.7}--\eqref{2.7c} is nonempty. Let $(\mathcal{U},y,z)\in\Xi_{sol}$ be an admissible triplet.
Since the matrix $\mathcal{U}$ is an admissible
control to the problem \eqref{3.1}--\eqref{3.4} for every $\e>0$, we construct
the sequence $\left\{(\mathcal{U}_\e,y_\e,z_\e)\in \Xi_{\e}\right\}_{\e>0}$
 as follows: $\mathcal{U}_\e=\mathcal{U}$,
$\forall\,\e>0$ and $y_\e=y_{\,\Omega_\e,\mathcal{U}}$ is the corresponding
solution of boundary value problem \eqref{3.2}--\eqref{3.3}. As for the choice of elements $z_\e$, we make it later on.

Then, by Corollary \ref{Cor 3.18}, we have
\begin{equation*}
\widetilde{y}_{\,\Omega_\e,\,\mathcal{U}}\rightarrow
\widetilde{y}_{\,\Omega,\,\mathcal{U}}\ \text{ strongly in }\
W^{1,\,p}_0(D),
\end{equation*}
where $y_{\,\Omega,\,\mathcal{U}}$ is a unique solution for
\eqref{2.7a}--\eqref{2.7b}. Then the inclusion $(\mathcal{U},y,z)\in\Xi_{sol}$ implies $y=y_{\,\Omega,\,\mathcal{U}}$.

By the initial assumptions $g(x)=g(x)\chi_\Omega(x)$. Hence,
$$
\int_D \widetilde z\psi\,dx+\int_D BF(\widetilde{y},\widetilde{z})\psi\,dx=\int_D g\psi\,dx,\;\forall\,\psi\in C_0^\infty(D),
$$
i.e. $\widetilde{z}\in\mathcal{H}(\widetilde{y})\subset L^p(D)$.
Then, in view of $(\mathfrak{B})$-property, for the given pair $(\widetilde{y},\widetilde{z})$  there exists a sequence $\{\widehat{z}_\e\in \mathcal{H}(\widetilde{y}_{\,\Omega_\e,\,\mathcal{U}})\}_{\e>0}$ such that $\widehat{z}_\e\to \widetilde{z}$ strongly in $L^p(\Omega)$. As a result, we can take $\{(\mathcal{U}_\e,\widetilde{y}_\e,\widehat{z}_\e)\}$ as a realizing sequence. Moreover, in this case the desired property of the cost functional seems pretty obvious. Indeed,
\begin{align*}
\limsup_{\e\to 0}I_{\,\Omega_{\e}}(\mathcal{U}_e,y_e,z_e)&=
\limsup_{\e\to 0}\int_{D}
|\widehat{z}_\e-z_d|^p\,dx
= \int_{D} |\widetilde{z}-z_d|^p\,dx \\
&=
 \int_{\Omega} |z-z_d|^p\,dx=I_{\,\Omega}(\mathcal{U},y,z).
\end{align*}

The proof is complete.
\end{proof}

%%%%%%%%%%%%%%%%%%%%%%%%%%%%%%%%%%%%%%%%%%%%%%%%%%%%%%%%%%%%%%%%%%%%

\begin{theorem}
\label{Th 4.12} Under the assumptions of Theorem \ref{Th 3.21} the optimal control problem
\eqref{2.7}--\eqref{2.7c} is Mosco-stable in
$L^\infty(D;\mathbb{R}^{N\times N})\times W^{1,\,p}_0(D)\times L^p(D)$.
\end{theorem}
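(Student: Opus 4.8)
The plan is to recognize that Theorem~\ref{Th 3.21} supplies exactly the two ingredients of a $\Gamma$-convergence (De Giorgi) argument --- the lower-bound inequality $(MS_1)$ and the recovery-sequence inequality $(MS_2)$ --- and to combine them with the compactness and admissibility results of Propositions~\ref{Prop 3.3}, \ref{Prop 3.13} and~\ref{Prop 3.20.1} in order to verify the three requirements of Definition~\ref{Def 4.1}. Throughout I abbreviate $m_\e:=\inf_{(\mathcal{U}_\e,y_\e,z_\e)\in\Xi_\e}I_{\Omega_\e}(\mathcal{U}_\e,y_\e,z_\e)$ and $m:=\inf_{(\mathcal{U},y,z)\in\Xi_{sol}}I_\Omega(\mathcal{U},y,z)$. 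Since all conditions of Theorem~\ref{Th 2.9} are assumed to hold on every open subset of $D$, both infima are attained: a sequence of optimal triplets $\{(\mathcal{U}^0_\e,y^0_\e,z^0_\e)\in\Xi_\e\}_{\e>0}$ exists, and the original problem has an optimizer $(\mathcal{U}^{opt},y^{opt},z^{opt})$ with $I_\Omega(\mathcal{U}^{opt},y^{opt},z^{opt})=m$.

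First I would establish requirement (i), the relative $\tau$-compactness of $\{(\mathcal{U}^0_\e,\widetilde{y}^0_\e,\widetilde{z}^0_\e)\}_{\e>0}$. Under either hypothesis~1 or~2 of Theorem~\ref{Th 3.21}, Proposition~\ref{Prop 3.3} (respectively Proposition~\ref{Prop 3.20.1}) gives uniform boundedness of this sequence in $L^\infty(D;\mathbb{R}^{N\times N})\times W^{1,p}_0(D)\times L^p(D)$. As $U_{ad}$ is weak-$\ast$ compact in $L^\infty(D;\mathbb{R}^{N\times N})$ (Remark~\ref{rem 1.8}) and bounded subsets of the reflexive spaces $W^{1,p}_0(D)$ and $L^p(D)$ are relatively weakly sequentially compact, the product sequence is relatively $\tau$-compact, which is precisely (i). Next, Propositions~\ref{Prop 3.13} and~\ref{Prop 3.20.1} show that every $\tau$-cluster triplet $(\mathcal{U}^\ast,y^\ast,z^\ast)$ satisfies $y^\ast=\widetilde{y}_{\,\Omega,\mathcal{U}^\ast}$, $z^\ast|_{\Omega}\in\mathcal{H}(y_{\,\Omega,\mathcal{U}^\ast})$ and $(\mathcal{U}^\ast,y^\ast|_{\Omega},z^\ast|_{\Omega})\in\Xi_{sol}$; thus cluster triplets are automatically admissible for the original problem, which is the admissibility half of (ii).

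Then I would pin down $\lim_{\e\to0}m_\e$ by two matching inequalities. For the upper bound, apply $(MS_2)$ to the optimizer $(\mathcal{U}^{opt},y^{opt},z^{opt})$: it produces a realizing sequence $\{(\mathcal{U}_\e,y_\e,z_\e)\in\Xi_\e\}_{\e>0}$ with $\limsup_{\e\to0}I_{\Omega_\e}(\mathcal{U}_\e,y_\e,z_\e)\le I_\Omega(\mathcal{U}^{opt},y^{opt},z^{opt})=m$; since each member is admissible, $m_\e\le I_{\Omega_\e}(\mathcal{U}_\e,y_\e,z_\e)$, whence $\limsup_{\e\to0}m_\e\le m$. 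For the lower bound, pick any subsequence $\{\e_k\}$ realizing $\liminf_{\e\to0}m_\e$; by (i) the associated optimal triplets admit a further $\tau$-convergent subsequence with limit $(\mathcal{U},\psi,\xi)$, and $(MS_1)$ then yields both $(\mathcal{U},\psi|_{\Omega},\xi|_{\Omega})\in\Xi_{sol}$ and $\liminf_k m_{\e_k}=\liminf_k I_{\Omega_{\e_k}}(\mathcal{U}^0_{\e_k},y^0_{\e_k},z^0_{\e_k})\ge I_\Omega(\mathcal{U},\psi|_{\Omega},\xi|_{\Omega})\ge m$. Hence $\liminf_{\e\to0}m_\e\ge m$, and together with the upper bound I conclude $\lim_{\e\to0}m_\e=m$, which is the chain of equalities in \eqref{4.5}.

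Finally, to complete requirement (ii) I would upgrade each admissible cluster triplet to an optimal one. Given any $\tau$-cluster triplet $(\mathcal{U}^\ast,y^\ast,z^\ast)$ of the optimal sequence, take a subsequence attaining it; $(MS_1)$ gives $I_\Omega(\mathcal{U}^\ast,y^\ast|_{\Omega},z^\ast|_{\Omega})\le\liminf_k m_{\e_k}=m$ by the convergence of the minima just proved, while admissibility forces $I_\Omega(\mathcal{U}^\ast,y^\ast|_{\Omega},z^\ast|_{\Omega})\ge m$, so equality holds and the cluster triplet is optimal. The same computation, read under the convergence \eqref{4.4}, delivers $(\mathcal{U}^0,y^0|_{\Omega},z^0|_{\Omega})\in\Xi_{sol}$ together with the middle equality of \eqref{4.5}. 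I do not expect a genuine obstacle at this stage: the entire difficulty --- the $H^c$- or $t$-admissible limit passage in the state and Hammerstein equations, and the construction of a strongly convergent recovery sequence via the $(\mathfrak{B})$-property --- has already been absorbed into Theorem~\ref{Th 3.21} and the preceding propositions, so the present argument is the standard assembly of a $\Gamma$-limit from its lower-bound and recovery halves. The one point demanding care is the bookkeeping of subsequences, namely running the lower-bound inequality along an arbitrary $\liminf$-realizing subsequence so that the conclusion applies to the full family $\{m_\e\}$ rather than to a single preselected subsequence.
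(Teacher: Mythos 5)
Your proposal is correct and follows essentially the same route as the paper: both proofs assemble Mosco-stability by combining the relative $\tau$-compactness of the optimal sequences (from the a priori bounds underlying Propositions~\ref{Prop 3.3} and~\ref{Prop 3.20.1}) with the lower-bound property $(MS_1)$ applied to cluster triplets and the recovery-sequence property $(MS_2)$ applied to an optimizer of the original problem, yielding the chain of inequalities that forces $\lim_{\e\to 0} m_\e = m$ and the optimality of every cluster triplet. The only difference is organizational --- you fix a $\liminf$-realizing subsequence first, whereas the paper runs the argument on an arbitrary $\tau$-convergent subsequence and concludes by noting the equalities \eqref{4.8}--\eqref{4.9} hold for every such subsequence --- which is immaterial.
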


\begin{proof}
In view of a priory estimates \eqref{1.3}, \eqref{1.17} and \eqref{7*}, we can
immediately conclude that any sequence of optimal pairs
$\left\{(\mathcal{U}^0_\e, y^{\,0}_\e,z^{\,0}_\e)\in{\Xi}_{\e}\right\}_{\e>0}$
to the perturbed problems \eqref{3.1}--\eqref{3.4} is uniformly bounded and, hence, relatively
$\tau$-compact in $L^\infty\break(D;\mathbb{R}^{N\times N})\times
W^{1,\,p}_0(D)\times L^p(\Omega)$. So, we may suppose that there exist a subsequence
$\left\{(\mathcal{U}^0_{\e_k}, y^{\,0}_{\e_k},z^{\,0}_{\e_k})\right\}_{\,k\in\,\mathbb{N}}$
and a triplet $(\mathcal{U}^*,y^*,z^*)$ such that $(\mathcal{U}^0_{\e_k},
\widetilde{y}^{\,0}_{\e_k},\widetilde{z}^{\,0}_{\e_k})\,\stackrel{\tau}{\longrightarrow}\,
(\mathcal{U}^*,y^*,z^*)$ as $k\to \infty$. Then, by  Theorem \ref{Th 3.21} (see property
$(MS_1)$), we have
$(\mathcal{U}^*,\left.y^*\right|_{\Omega},\left.z^*\right|_{\Omega})\in\Xi_{sol}$ and
\begin{align}
\notag
    \liminf_{k\to\infty}\min_{(\mathcal{U},\,y,\,z)\in\,\Xi_{\e_k}}
    I_{\Omega_{\e_k}}(\mathcal{U},y,z)&=
    \liminf_{k\to\infty}
    I_{\Omega_{\e_k}}(\mathcal{U}^0_{\e_k},
y^{\,0}_{\e_k},z^{\,0}_{\e_k})\\
\notag
&
\ge I_{\Omega}(\mathcal{U}^*,\left.y^*\right|_{\Omega},\left.z^*\right|_{\Omega})\\
\label{4.6}
&\ge
\min_{(\mathcal{U},\,y,\,z)\in\,\Xi_{sol}}I_{\,\Omega}(\mathcal{U},y,z) =
I_{\,\Omega}(\mathcal{U}^{opt},y^{opt},z^{opt}).
\end{align}
However, condition ($MS_2$) implies that for the optimal triplet $(\mathcal{U}^{opt},y^{opt},z^{opt})\in\Xi_{sol}$ there exists a realizing sequence
$\left\{(\widehat{\mathcal{U}}_\e,\widehat{y}_\e,\widehat{z}_\e)
\in\Xi_{\e}\right\}_{\e>0}$ such that
\begin{gather*}
(\widehat{\mathcal{U}}_\e,\widetilde{\widehat{y}}_\e,\widetilde{\widehat{z}}_\e)
\rightarrow
(\mathcal{U}^{opt},\widetilde{y}^{opt},\widetilde{z}^{opt}),\  \text{and}\\
I_{\,\Omega}(\mathcal{U}^{opt},y^{opt},z^{opt})\ge \limsup_{\e\to 0}
I_{\,\Omega_\e}(\widehat{\mathcal{U}}_\e,\widehat{y}_\e,\widehat{z}_\e).
\end{gather*}
Using this fact, we have
\begin{align}
\notag
    \min_{(\mathcal{U},\,y,\,z)\in\,\Xi_{sol}}I_{\,\Omega}(\mathcal{U},y,z) &=
    I_{\,\Omega}(\mathcal{U}^{opt},y^{opt},z^{opt})
    \ge \limsup_{\e\to 0}
I_{\,\Omega_\e}(\widehat{\mathcal{U}}_\e,\widehat{y}_\e,\widehat{z}_\e)\\
\notag
&\ge
\limsup_{\e\to
    0}\min_{(\mathcal{U},\,y,\,z)\,\in\Xi_{\e}} I_{\,\Omega_\e}(\mathcal{U},y,z)\\
    \notag
    &\ge \limsup_{k\to\infty}
    \min_{(\mathcal{U},\,y,\,z)\in\,\Xi_{\e_k}}
    I_{\Omega_{\e_k}}(\mathcal{U},y,z)\\
\label{4.7}
    &=
    \limsup_{k\to\infty}I_{\Omega_{\e_k}}(\mathcal{U}^0_{\e_k},
y^{\,0}_{\e_k},z^{\,0}_{\e_k}).
\end{align}
From this and \eqref{4.6}, we deduce
$$
    \liminf_{k\to\infty}
    I_{\Omega_{\e_k}}(\mathcal{U}^0_{\e_k},
y^{\,0}_{\e_k},z^{\,0}_{\e_k})\ge
\limsup_{k\to\infty}I_{\Omega_{\e_k}}(\mathcal{U}^0_{\e_k}, y^{\,0}_{\e_k},z^{\,0}_{\e_k}).
$$
Thus, combining the relations \eqref{4.6} and \eqref{4.7}, and
rewriting them in the form of equalities, we finally obtain
\begin{align}
\label{4.8}
    I_{\Omega}(\mathcal{U}^*,\left.y^*\right|_{\Omega},\left.z^*\right|_{\Omega})&=I_{\,\Omega}(\mathcal{U}^{opt},y^{opt},z^{opt})=
    \min_{(\mathcal{U},\,y,\,z)\in\,\Xi_{sol}}I_{\,\Omega}(\mathcal{U},y,z),\\
\label{4.9}
    I_{\,\Omega}(\mathcal{U}^{opt},y^{opt},z^{opt})&=\lim_{k\to\infty}\min_{(\mathcal{U},\,y,\,z)\in\,\Xi_{\e_k}}
    I_{\Omega_{\e_k}}(\mathcal{U},y,z).
\end{align}
Since equalities \eqref{4.8}--\eqref{4.9} hold true for every
$\tau$-convergent subsequence of the original sequence of optimal
solutions $\left\{(\mathcal{U}^0_\e,
y^{\,0}_\e,z^{\,0}_\e)\in{\Xi}_{\e}\right\}_{\e>0}$, it follows that
the limits in \eqref{4.8}--\eqref{4.9} coincide and, therefore,
$I_{\,\Omega}(\mathcal{U}^{opt},y^{opt},z^{opt})$ is the limit of the whole sequence
of minimal values $ \left\{I_{\Omega_{\e}}(\mathcal{U}^0_{\e}, y^{\,0}_{\e},z^{\,0}_\e)
=\inf_{(\mathcal{U},y,z)\in\,\Xi_{\e}}I_{\,\Omega_\e}(\mathcal{U},y,z)\right\}_{\e>0}$. This concludes the proof.
\end{proof}

\begin{remark}
It is worth to emphasize that without $(\mathfrak{B})$-property, the original optimal control problem can lose the Mosco-stability property with respect to the given type of domain perturbations. In such case there is no guarantee that each of optimal triplets to the OCP \eqref{2.7}--\eqref{2.7c} can be attained  through some sequence of optimal triplets to the perturbed problems \eqref{3.1}--\eqref{3.4}.
\end{remark}
\begin{remark} It is a principle point of our consideration, that we deal with the BVP for coupled Hammerstein-type system with Dirichlet boundary conditions. The question about stability of the similar OCP with Neumann boundary conditions remains open. In the meantime, this approach can be easily extended to the case when the boundary $\partial\Omega$ can be split onto two disjoint parts $\Gamma_1$ and $\Gamma_2$ with Dirichlet conditions on $\Gamma_1$ and Neumann conditions on $\Gamma_2$. In this case for the considered differential equation as a solution space it is enough to take instead of $W_0^{1,p}(\Omega)$ the following space
$$
W(\Omega;\Gamma_1)=cl_{\|\cdot\|_{W^{1,p}(\Omega)}}\{C_0^\infty(\Omega;\Gamma_1)\},
$$
 where $C_0^\infty(\Omega;\Gamma_1)=\{\varphi\in C_0^\infty(\mathbb{R}^N):\, \varphi|_{\Gamma_1}=0\}$.
\end{remark}

\section{Appendix}
\begin{remark}\label{Rem Ap.1}
Here we give examples to the fact that without supplementary regularity assumptions on the
sets, there is no connection between topological set convergence
and the set convergence in the Hausdorff complementary topology.
Indeed, the topological set convergence allows certain parts of
the subsets $\Omega_\e$ degenerating and being deleted in the
limit. For instance, assume that $\Omega$ consists of two disjoint
balls, and $\Omega_\e$ is a dumbbell with a small hole on each
side. Shrinking the holes and the handle, we can approximate the
set $\Omega$ by sets $\Omega_\e$ in the sense of Definition
\ref{Def 1.2} as shown in Figure \ref{Fig 1.1}.
\begin{figure}[htbp]
%\vspace*{13pt}
\begin{center}
\includegraphics[width=8cm]{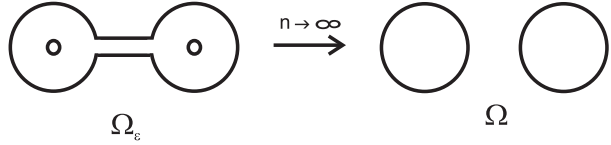}
\end{center}
%\vspace*{13pt}
\caption{Example of the set convergence in the sense of Definition
\ref{Def 1.2}} \label{Fig 1.1}
\end{figure}
It is obvious that in this case $d_{H^c}(\Omega_\e,\Omega)$ does
not converge to $0$ as $\e\to 0$. However, as an estimate of an
 \textquotedblleft approximation\textquotedblright\ of $\Omega$ by elements of the above sequence
$\Omega_\e\,\stackrel{\mathrm{top}}{\longrightarrow}\, \Omega$, we
can take the Lebesgue measure of the symmetric set difference
$\Omega_\e\triangle \Omega$, that is,
$\mu(\Omega,\Omega_\e)=\mathcal{L}^N(\Omega\setminus\Omega_\e\cup\Omega_\e\setminus\Omega)$.
It should be noted that in this case the distance $\mu$ coincides
with the well-known Ekeland metric in $L^\infty(D)$ applied to
characteristic functions:
\[
d_E(\chi_{\,\Omega}, \chi_{\,\Omega_\e})=\mathcal{L}^N\left\{x\in
D\,:\ \chi_{\,\Omega}(x) \neq \chi_{\,\Omega_\e}(x)\right\}=
\mu(\Omega,\Omega_\e).
\]
As an example of subsets which are $H^c$-convergent but
have no limit in the sense of Definition \ref{Def 1.2}, let us
consider the sets $\left\{\Omega_\e\right\}_{\e>0}$ containing an
oscillating crack with vanishing amplitude $\e$ (see Figure
\ref{Fig 1.2}).

\begin{figure}[htbp]
%\vspace*{13pt}
\begin{center}
\includegraphics[width=8cm]{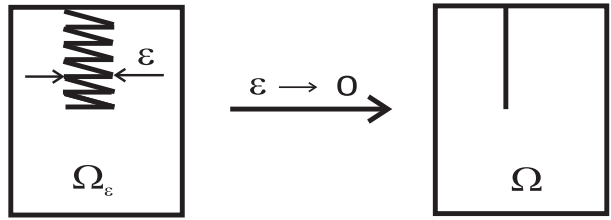}
\end{center}
%\vspace*{13pt}
\caption{The $p$-unstable sets which are compact with respect to
the $H^c$-topology} \label{Fig 1.2}
\end{figure}
\end{remark}

\subsection{Proof of Proposition \ref{prop 1.15}}

Let $\mathcal{U}\in U_{ad}$ be an arbitrary admissible control. Then for a given $f\in W^{-1,q}(D)$, the Dirichlet boundary problem \eqref{2.7a}--\eqref{2.7b} admits a unique solution $y_\mathcal{U}=y(\mathcal{U},f)\in W_0^{1,p}$ for which the estimate \eqref{1.17} holds true. It remains to remark that the corresponding Hammerstein equation
\begin{equation}\label{ast}
z+BF(y_\mathcal{U},z)=g
\end{equation}
has a nonempty set of solutions $\mathcal{H}(y_\mathcal{U})$ for every $g\in L^p(D)$ by Theorem \ref{Th 1.1*}.

\subsection{Proof of Corollary \ref{Rem 1.8}}

Let $\left\{ y_k\right\}_{k\in \mathbb{N}}$  be a given sequence, and let $y_0\in W_0^{1,p}(\Omega)$ be its strong limit. Let $\left\{ z_k\in \mathcal{H}(y_k)\right\}_{k\in \mathbb{N}}$ be an arbitrary sequence of corresponding solutions to the Hammerstein equation \eqref{1.9.2}.
As follows from the proof of Theorem \ref{Th 2.8}, the sequence $\{z_k\in \mathcal{H}(y_k)\}_{k\in\mathbb{N}}$ is uniformly bounded in $L^p(\Omega)$ and, moreover, there exist a subsequence of $\{z_k\}_{k\in\mathbb{N}}$  still denoted by the same index and an element $z_0\in L^p(\Omega)$ such that $z_k\to z_0$ weakly in $L^p(\Omega)$ and $z_0\in \mathcal{H}(y_0)$. Our aim is to show that in this case $z_k\to z_0$ strongly in $L^p(\Omega)$. Indeed, as follows from \eqref{1.22} and \eqref{3*}, we have the following equalities
\begin{align}
\label{4*}
\langle F(y_k,z_k),z_k\rangle_{L^p(\Omega)}+\langle F(y_k,z_k), BF(y_k,z_k)\rangle_{L^p(\Omega)}&=
\langle F(y_k,z_k),g\rangle_{L^p(\Omega)},\;\forall k\in\mathbb{N},\\
\label{5*}
\langle F(y_0,z_0),z_0\rangle_{L^p(\Omega)}+\langle F(y_0,z_0), BF(y_0,z_0)\rangle_{L^p(\Omega)}&=
\langle F(y_0,z_0),g\rangle_{L^p(\Omega)}.
\end{align}
Taking into account that $F(y_k,z_k)\to F(y_0,z_0)$ weakly in $L^q(\Omega)$ (see Theorem \ref{Th 2.8}), the limit passage in \eqref{4*} leads us to the relation
\begin{equation}\label{6*}
\lim_{k\to\infty}\left(\langle F(y_k,z_k),z_k\rangle_{L^p(\Omega)}+\langle F(y_k,z_k), BF(y_k,z_k)\rangle_{L^p(\Omega)}\right)=\langle F(y_0,z_0), g\rangle_{L^p(\Omega)}.
\end{equation}
Since the right-hand sides  of \eqref{5*} and \eqref{6*} coincide, the lower semicontinuity of the functional $\langle Bv,v\rangle_{L^p(\Omega)}$ with respect to the weak topology of $L^p(\Omega)$ and $(\mathfrak{A})$-property of operator $F:W_0^{1,p}(\Omega)\times L^p(\Omega)\to L^q(\Omega)$ imply
\begin{align*}
\langle F(y_0,z_0),z_0\rangle_{L^p(\Omega)}&+\langle F(y_0,z_0), BF(y_0,z_0)\rangle_{L^p(\Omega)}=\langle F(y_0,z_0), g\rangle_{L^p(\Omega)}\\
&=
 \lim_{k\to\infty}\Big[\langle F(y_k,z_k),z_k\rangle_{L^p(\Omega)}+\langle F(y_k,z_k), BF(y_k,z_k)\rangle_{L^p(\Omega)}\Big]\\
 &\ge \liminf_{k\to\infty}\Big[\langle  F(y_k,z_k),z_k\rangle_{L^p(\Omega)}+\langle F(y_k,z_k), BF(y_k,z_k)\rangle_{L^p(\Omega)}\Big]\\
&\ge\langle  F(y_0,z_0),z_0\rangle_{L^p(\Omega)}+\langle F(y_0,z_0), BF(y_0,z_0)\rangle_{L^p(\Omega)}.
\end{align*}
Hence,
\begin{align*}
\lim_{k\to\infty}\langle F(y_k,z_k),z_k\rangle_{L^p(\Omega)}&=\langle  F(y_0,z_0),z_0\rangle_{L^p(\Omega)},\\
\lim_{k\to\infty}\langle F(y_k,z_k), BF(y_k,z_k)\rangle_{L^p(\Omega)} &=\langle F(y_0,z_0), BF(y_0,z_0)\rangle_{L^p(\Omega)}.
\end{align*}
To conclude the proof, it remains to apply the $(\mathfrak{M})$-property of operator $F:W_0^{1,p}(\Omega)\times L^p(\Omega)\to L^q(\Omega)$.

\begin{remark}\label{Rem 1.9}
It is worth to emphasize that
Corollary \ref{Rem 1.8} leads to the following important property of Hammerstein equation \eqref{2.7c}: if the operator $F:W_0^{1,p}(\Omega)\times L^p(\Omega)\to L^q(\Omega)$ is compact and possesses $(\mathfrak{M})$ and $(\mathfrak{A})$ properties, then the solution set $\mathcal{H}(y)$ of \eqref{2.7c} is compact with respect to the strong topology in $L^p(\Omega)$ for every element $y\in W_0^{1,p}(\Omega)$. Indeed, the validity of this assertion immediately follows from Corollary \ref{Rem 1.8} if we apply it to the sequence $\{y_k\equiv y\}_{k\in\mathbb{N}}$ and make use of the weak compactness property of $\mathcal{H}(y)$.
\end{remark}
\begin{remark} \label{Rem 1.10} As an example of the nonlinear operator $F:W_0^{1,p}(\Omega)\times L^p(\Omega)\to L^q(\Omega)$ satisfying all conditions of Theorem \ref{Th 2.8} and Corollary \ref{rem 1.8}, we can consider the following one
$$
F(y,z)=|y|^{p-2}y+|z|^{p-2}z.
$$
Indeed, this function is obviously radially continuous and  it is also strictly monotone
\begin{align*}
\langle F(y,z_1) -  F(y,z_2),z_1-z_2\rangle_{L^p(\Omega)}&=\int_{\Omega}\left(|z_1|^{p-2}z_1-|z_2|^{p-2}z_2\right)(z_1-z_2)\,dx\\
&\ge 2^{2-p}\|z_1-z_2\|^p_{L^p(\Omega)}\ge 0.
\end{align*}
This implies that $F$ is an operator with u.s.b.v.
It is also easy to see that $F$ is compact with respect to the first argument. Indeed, if $y_k\to y$ weakly in $W_0^{1,p}(\Omega)$, then, in view of the Sobolev embedding theorem, we have $y_k\to y$ strongly in $L^p(\Omega)$. Combining this fact with the convergence of norms
$$
\|\left|y_k\right|^{p-2}
y_k\|^q_{L^q(\Omega)}=\|
y_k\|^p_{L^p(\Omega)}\rightarrow \|
y\|^p_{L^p(\Omega)}=\|\left|
y\right|^{p-2}
y\|^q_{L^q(\Omega)}
$$
we arrive at the strong convergence $|y_k|^{p-2}y_k\to|y|^{p-2}y$ in $L^q(\Omega)$. As a result, we have $F(y_k,z)\to F(y,z)$ strongly in $L^q(\Omega)$.

Let us show that $F$ possesses the $(\mathfrak{M})$ and $(\mathfrak{A})$ properties. As for the $(\mathfrak{M})$ property, let $y_k\to y$ strongly in $W_0^{1,p}(\Omega)$ and $z_k\to z$ weakly in $L^p(\Omega)$ and the following condition holds
$$
\lim_{k\to\infty}\langle F(y_k,z_k),z_k\rangle_{L^p(\Omega)}=\langle F(y,z),z\rangle_{L^p(\Omega)}.
$$
Then,
\begin{align*}
\lim_{k\to\infty}\langle z_k,F(y_k,z_k)\rangle_{L^p(\Omega)}&=\lim_{k\to\infty}\langle |y_k|^{p-2}y_k,z_k\rangle_{L^p(\Omega)}
+\lim_{k\to\infty}\langle |z_k|^{p-2}z_k,z_k\rangle_{L^p(\Omega)}\\
&=\langle |y|^{p-2}y,z\rangle_{L^p(\Omega)}+\lim_{k\to\infty}\|z_k\|^p_{L^p(\Omega)}\\
&=\langle |y|^{p-2}y,z\rangle_{L^p(\Omega)}+\|z\|^p_{L^p(\Omega)}=\langle F(y,z),z\rangle_{L^p(\Omega)}.
\end{align*}
However, this relation implies the norm convergence $\|z_k\|_{L^p(\Omega)}\rightarrow \|z\|_{L^p(\Omega)}$. Since $z_k\to z$ weakly in $L^p(\Omega)$, we finally conclude: the sequence $\{z_k\}_{k\in\mathbb{N}}$ is strongly convergent to $z$ in $L^p(\Omega)$. By analogy, using also the lower semi-continuity  of the norm in $L^p(\Omega)$, we can verify property $(\mathfrak{A})$ just as easy.
\end{remark}
\subsection{Proof of Corollary \ref{Cor 3.18}.}
\begin{proof}
As follows from Propositions \ref{Prop 3.13} and \ref{Prop
3.20.1}, the sequence of admissible triplets
$\left\{(\mathcal{U}^\ast,y_\e,z_\e)\in
\Xi_{\e}\right\}_{\e>0}$ is relatively $\tau$-compact, and there exists a $\tau$-limit triplet
$(\mathcal{U}^\ast,y^\ast,z^\ast)$ such that $\left.
y^\ast\right|_\Omega=y_{\,\Omega,\,\mathcal{U}^\ast}$ and $\left.
z^\ast\right|_\Omega\in\mathcal{H}(y_{\,\Omega,\,\mathcal{U}^\ast})$. Having set
$y=y_{\,\Omega,\,\mathcal{U}^\ast}$, we
prove the strong convergence of $\widetilde{y}_\e$ to
$\widetilde{y}$ in $W^{1,\,p}_0(D)$. Then the strong convergence
of $z_\e$ to $z^\ast$ in $L^p(D)$ will be ensured by Corollary \ref{Rem 1.8}.

To begin with, we prove the convergence of
norms of $\widetilde{y}_\e$
\begin{equation}
\label{3.19} \|\widetilde{y}_\e\|_{W^{1,\,p}(D)}\rightarrow
\|\widetilde{y}\|_{W^{1,\,p}(D)}\ \text{ as }\ \e\to 0.
\end{equation}

As we already mentioned, since $\mathcal{U}^\ast\in U_{ad}$,  we can consider as an equivalent norm in $W^{1,\,p}_0(D)$
the following one
\[
\|y\|^{\mathcal{U}^\ast}_{W^{1,\,p}_0(D)}=\left(\int_D \left(\mathcal{U}^\ast[(\nabla
y)^{p-2}]\nabla y,\nabla y\right)_{\mathbb{R}^N}\,dx +\int_D
|y|^p\,dx\right)^{1/p}.
\]
As a result, the space
$\left<W^{1,\,p}_0(D),\|\cdot\|^{\mathcal{U}^\ast}_{W^{1,\,p}_0(D)}\right>$
endowed with this norm is uniformly convex.
Hence, instead of \eqref{3.19}, we can establish that
\begin{equation}
\label{3.20}
\|\widetilde{y}_\e\|^{\mathcal{U}^\ast}_{W^{1,\,p}(D)}\rightarrow
\|\widetilde{y}\|^{\mathcal{U}^\ast}_{W^{1,\,p}(D)}\ \text{ as }\
\e\to 0.
\end{equation}

Using the equations \eqref{2.7a} and \eqref{3.2}, we take as test
functions $\widetilde{y}$ and $\widetilde{y}_\e$, respectively.
Then, passing to the limit in \eqref{3.2}, we get
\begin{align*}
\lim_{\e\to 0}&\left(\int_D \left(\mathcal{U}^\ast[(\nabla \widetilde{y}_\e)^{p-2}]
\nabla\widetilde{y}_\e,\nabla\widetilde{y}_\e\right)_{\mathbb{R}^N}\,dx
+\int_D
|\widetilde{y}_\e|^p \,dx\right)\\
&= \lim_{\e\to
0}\left(\|\widetilde{y}_\e\|^{\mathcal{U}^\ast}_{W^{1,\,p}(D)}\right)^p=
\lim_{\e\to 0} \langle f ,\widetilde{y}_\e\rangle_{W_0^{1,p}(D)} =\langle f ,\widetilde{y}\rangle_{W_0^{1,p}(D)}
 \\
 &= \int_D \left(\mathcal{U}^\ast[(\nabla\widetilde{y})^{p-2}]
\nabla\widetilde{y},\nabla\widetilde{y}\right)_{\mathbb{R}^N}\,dx +\int_D
|\widetilde{y}|^p \,dx =
\left(\|\widetilde{y}\|^{\mathcal{U}^\ast}_{W^{1,\,p}(D)}\right)^p.
\end{align*}
Since \eqref{3.20} together with the weak convergence in
$W^{1,\,p}_0(D)$ imply the strong convergence, we arrive at the
required conclusion.
\end{proof}
%%%%%%%%%%%%%%%%%%%%%%%%%%%%%%%%%%%%%%%%%%%%%%%%%%%%%%%%%%%%%%%%%%%%%%%%%%%%%%%%%%%%%%%

\medskip% The data information below will be filled by AIMS editorial staff
Received April 2014; revised October 2014.
\medskip
\end{document}